\theoremstyle{plain}
\theoremstyle{definition}
\theoremstyle{remark}
\newlist{tfae}{enumerate}{1}
\setlist[tfae,1]{label=(\roman*)}
\DeclareMathOperator{\upc}{\uparrow\!}
\DeclareMathOperator{\downc}{\downarrow\!}
\DeclareMathOperator{\im}{im}
\newcommand{\catfont}[1]{\mathsf{#1}}
\newcommand{\catA}{\catfont{A}}
\newcommand{\catB}{\catfont{B}}
\newcommand{\SET}{\catfont{Set}}
\newcommand{\ORD}{\catfont{Ord}}
\newcommand{\POST}{\catfont{Pos}}
\newcommand{\MET}{\catfont{Met}}
\newcommand{\TOP}{\catfont{Top}}
\newcommand{\STONE}{\catfont{BooSp}}
\newcommand{\PRIEST}{\catfont{Priest}}
\newcommand{\COMPHAUS}{\catfont{CompHaus}}
\newcommand{\POSCH}{\catfont{PosComp}}
\newcommand{\op}{\mathrm{op}}
\newcommand{\met}{\mathrm{met}}
\newcommand{\modto}{\mathrel{\mathmakebox[\widthof{$\xrightarrow{\rule{1.45ex}{0ex}}$}]{\xrightarrow{\rule{1.45ex}{0ex}}\hspace*{-2.8ex}{\circ}\hspace*{1ex}}}}
\DeclareMathOperator{\CoAlg}{CoAlg}
\newcommand{\luk}{\odot}
\newcommand{\monadfont}[1]{\mathbbm{#1}}
\newcommand{\mV}{\monadfont{V}}
\newcommand{\vmonad}{(V,m,e)}
\newcommand{\field}[1]{\mathds{#1}}
\newcommand{\N}{\field{N}}
\newcommand{\Q}{\field{Q}}
\newcommand{\df}[1]{\emph{\textbf{#1}}}
\title[Generating the algebraic theory of $C(X)$]{Generating the algebraic
  theory of $C(X)$:\\ the case of partially ordered compact spaces}
\author{Dirk Hofmann}
\address{Center for Research and Development in Mathematics and
Applications, Department of Mathematics, University of Aveiro,
3810-193 Aveiro, Portugal}
\email{dirk@ua.pt}
\author{Renato Neves}
\address{INESC TEC (HASLab) \& Universidade do Minho, Portugal}
\email{nevrenato@di.uminho.pt}
\author{Pedro Nora}
\address{Center for Research and Development in Mathematics and
Applications, Department of Mathematics, University of Aveiro,
3810-193 Aveiro, Portugal}
\email{a28224@ua.pt}
\keywords{Ordered compact space, quasivariety, duality, coalgebra, Vietoris
  functor, copresentable object, metrisable}
\subjclass[2010]{%
  18B30, 
  18D20, 
  18C35, 
  54A05, 
  54F05} 
\date{\today}
\begin{document}

\begin{abstract}
  It is known since the late 1960's that the dual of the category of compact
  Hausdorff spaces and continuous maps is a variety -- not finitary, but bounded
  by $\aleph_1$. In this note we show that the dual of the category of partially
  ordered compact spaces and monotone continuous maps is a $\aleph_1$-ary
  quasivariety, and describe partially its algebraic theory. Based on this
  description, we extend these results to categories of Vietoris coalgebras and
  homomorphisms. We also characterise the $\aleph_1$-copresentable partially
  ordered compact spaces.
\end{abstract}

\maketitle

\section{Introduction}
\label{sec:introduction}

The motivation for this paper stems from two very different sources. Firstly, it
is known since the end of the 1960's that the dual of the category $\COMPHAUS$
of compact Hausdorff spaces and continuous maps is a variety -- not finitary,
but bounded by $\aleph_1$. More in detail,
\begin{itemize}
\item in \cite{Dus69} it is proved that the representable functor
  $\hom(-,[0,1]) \colon\COMPHAUS^\op\to\SET$ is monadic,
\item the unit interval $[0,1]$ is shown to be a $\aleph_0$-copresentable
  compact Hausdorff space in \cite{GU71},
\item a presentation of the algebra operations of $\COMPHAUS^\op$ is given in
  \cite{Isb82}, and
\item a complete description of the algebraic theory of $\COMPHAUS^\op$ is
  obtained in \cite{MR17}.
\end{itemize}
It is also worth mentioning that, by the famous Gelfand duality theorem
\cite{Gel41a}, $\COMPHAUS$ is dually equivalent to the category of commutative
$C^*$-algebras and homomorphisms; the algebraic theory of (commutative)
$C^*$-algebras is extensively studied in \cite{Neg71,PR89,PR93}. Our second
source of inspiration is the theory of coalgebras. In \cite{KKV04} the authors
argue that the category $\STONE$ of Boolean spaces and continuous maps ``is an
interesting base category for coalgebras''; among other reasons, due to the
connection with modal logic. A similar study based on the Vietoris functor on
the category $\PRIEST$ of Priestley spaces and monotone continuous maps can be
found in \cite{CLP91,Pet96,BKR07}. Arguably, the categories $\STONE$ and
$\PRIEST$ are very suitable in this context because they are duals of finitary
varieties (due to the famous Stone dualities \cite{Sto36,Sto38a,Sto38}), a
property which extends to categories of coalgebras and therefore guarantees for
instance good completeness properties.

In this note we go a step further and study the category $\POSCH$ of partially
ordered compact spaces and monotone continuous maps, which was introduced in
\cite{Nac50} and constitutes a natural extension of both the category
$\COMPHAUS$ and the category $\PRIEST$. It remains open to us whether
$\POSCH^\op$ is also a variety; however, based on the duality results of
\cite{HN16_tmp} and inspired by \cite{Isb82}, we are able to prove that
$\POSCH^\op$ is a $\aleph_1$-ary quasivariety and give a partial description of
its algebraic theory. This description turns out to be sufficient to identify
also the dual of the category of coalgebras for the Vietoris functor
$V \colon\POSCH\to\POSCH$ as a $\aleph_1$-ary quasivariety. Finally, we
characterise the $\aleph_1$-copresentable objects of $\mathsf{PosComp}$ as
precisely the metrisable ones.

\section{Preliminaries}
\label{sec:quas-posch-abstr}

In this section we recall the notion of partially ordered compact space
introduced in \cite{Nac50} together with some fundamental properties of these
spaces.

\begin{definition}
  A \df{partially ordered compact space} $(X,\leq,\tau)$ consists of a set $X$,
  a partial order $\leq$ on $X$ and a compact topology $\tau$ on $X$ so that
  \[
    \{(x,y)\in X\times X\mid x\leq y\}
  \]
  is closed in $X\times X$ with respect to the product topology.
\end{definition}

We will often simply write $X$ instead of $(X,\leq,\tau)$. For every partially
ordered compact space $X$, also the subset
\[
  \{(x,y)\in X\times X\mid x\geq y\}
\]
is closed in $X\times X$ since the mapping
$X\times X\to X\times X,\,(x,y)\mapsto (y,x)$ is a homeomorphism. Therefore the
diagonal
\[
  \Delta_X=\{(x,y)\in X\times X\mid x\leq y\}\cap\{(x,y)\in X\times X\mid x\geq
  y\}
\]
is closed in $X\times X$, which tells us that the topology of a partially
ordered compact space is Hausdorff. We denote the category of partially ordered
compact spaces and monotone continuous maps by $\POSCH$.

\begin{example}
  The unit interval $[0,1]$ with the usual Euclidean topology and the ``greater
  or equal'' relation $\geqslant$ is a partially ordered compact space; via the
  mapping $x\mapsto 1-x$, this space is isomorphic in $\POSCH$ to the space with
  the same topology and the ``less or equal'' relation $\leqslant$.
\end{example}

Clearly, there is a canonical forgetful functor $\POSCH\to\POST$ from $\POSCH$
to the category $\POST$ of partially ordered sets and monotone maps. By the
observation above, forgetting the order relation defines a functor
$\POSCH\to\COMPHAUS$ from $\POSCH$ to the category $\COMPHAUS$ of compact
Hausdorff spaces and continuous maps. For more information regarding properties
of $\POSCH$ we refer to \cite{Nac65,GHK+80,Jun04,Tho09}; however, we recall
here:

\begin{theorem}
  The category $\POSCH$ is complete and cocomplete, and both canonical forgetful
  functors $\POSCH\to\COMPHAUS$ and $\POSCH\to\POST$ preserve limits.
\end{theorem}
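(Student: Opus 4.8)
The plan is to handle limits by direct construction and to obtain colimits through the Special Adjoint Functor Theorem.

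For completeness I would build products and equalizers by hand. The product of a family $(X_i)_{i\in I}$ in $\POSCH$ is the product set carrying the product topology---compact by Tychonoff's theorem---and the componentwise order; the latter is closed in $\prod_i X_i\times\prod_i X_i$ because it equals the intersection over $i\in I$ of the preimages of the closed relations ${\leq_i}$ under the continuous maps $\prod_i X_i\times\prod_i X_i\to X_i\times X_i$, while reflexivity, transitivity and antisymmetry are inherited coordinatewise; the universal property is likewise checked coordinatewise. The equalizer of a parallel pair $f,g\colon X\to Y$ is $E=\{x\in X\mid f(x)=g(x)\}$ with the subspace topology and the restricted order: $E=\langle f,g\rangle^{-1}(\Delta_Y)$ is closed because $Y$ is Hausdorff, hence compact as a closed subspace of $X$, and ${\leq_X}\cap(E\times E)$ is closed in $E\times E$. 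Since products and equalizers generate all small limits, $\POSCH$ is complete. After forgetting the order, resp.\ the topology, these constructions coincide with the familiar products and equalizers of $\COMPHAUS$ and of $\POST$; and since a functor out of a category with products and equalizers preserves every small limit as soon as it preserves products and equalizers, both forgetful functors preserve limits. (They do not preserve colimits: coproducts in $\POSCH$ and in $\COMPHAUS$ are not disjoint unions.)

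For cocompleteness I would apply the Special Adjoint Functor Theorem to the diagonal functor $\Delta\colon\POSCH\to\POSCH^{\mathcal J}$, for each small category $\mathcal J$. This functor preserves limits---they are computed pointwise and $\POSCH$ is complete by the above---so it is enough to exhibit $\POSCH$ as well-powered with a small cogenerating family. Well-poweredness holds because $\POSCH\to\SET$ is a composite of right adjoints---$\POSCH\to\COMPHAUS$ has the ``discrete order'' functor and $\COMPHAUS\to\SET$ has $S\mapsto\beta S$ as left adjoints---so monomorphisms in $\POSCH$ are injective maps; as a continuous injection out of a compact space into a Hausdorff space is an embedding, a subobject of $X$ is determined by a subset of $X$ together with a closed partial order on it, and there is only a set of such pairs. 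As cogenerating family I would take the single space $([0,1],\leq)$ and invoke Nachbin's order-theoretic Urysohn lemma \cite{Nac65}: in a partially ordered compact space, a closed up-set and a disjoint closed down-set can always be separated by a monotone continuous map into $[0,1]$. Indeed, if $x\not\leq z$ in $X$, then $\upc x$ and $\downc z$ are closed---each is the preimage of ${\leq_X}\subseteq X\times X$ under a continuous map $X\to X\times X$---are respectively an up-set and a down-set, and are disjoint (a common point $w$ would give $x\leq w\leq z$), so Nachbin's lemma yields a monotone continuous $f\colon X\to[0,1]$ with $f(x)=1\neq 0=f(z)$. Hence the canonical map $X\to[0,1]^{\hom_{\POSCH}(X,[0,1])}$ is an order-embedding and---being a continuous injection from a compact space into a Hausdorff space---a topological embedding, in particular a monomorphism, so $([0,1],\leq)$ is a cogenerator. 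The Special Adjoint Functor Theorem now supplies a left adjoint to each $\Delta$, which means $\POSCH$ is cocomplete. I expect the cocompleteness part to be the main obstacle, and within it the appeal to Nachbin's separation theorem---the one genuinely non-formal ingredient; alternatively one may simply cite \cite{Nac65,Tho09} for the (co)completeness of $\POSCH$.
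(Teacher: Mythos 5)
Your argument is correct, but it takes a genuinely different route from the paper, whose entire proof is a citation of Tholen's explicit construction of limits and colimits in \cite{Tho09}. Your treatment of limits (Tychonoff products with the componentwise order, equalizers as closed subspaces with the induced order, preservation checked on products and equalizers) is exactly what that construction amounts to, so on the completeness half you are merely unpacking the reference. The real divergence is in cocompleteness: where \cite{Tho09} builds coequalizers concretely -- a genuinely delicate matter, since one must close up the image order under transitivity and topological closure and then still force antisymmetry by a further quotient -- you bypass all of this with the Special Adjoint Functor Theorem, using that $\POSCH\to\SET$ is a right adjoint (so monos are injective and the category is well-powered) and that $[0,1]$ cogenerates via Nachbin's separation theorem \cite[Theorem 6]{Nac65}. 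This buys a short, self-contained existence proof at the price of losing the explicit description of colimits; note that the paper does lean on the cogenerator property of $[0,1]$ and on Nachbin's theorem elsewhere (Theorems 2.4 and 2.5), so your one non-formal ingredient is the same as theirs. All the individual steps check out: $\upc x$ and $\downc z$ are closed as preimages of the closed order relation, they are disjoint precisely when $x\nleq z$, and antisymmetry reduces point-separation to this case. The only blemish is the closing parenthetical: finite coproducts in $\COMPHAUS$ (and in $\POSCH$) \emph{are} disjoint unions; it is only infinite coproducts that involve a compactification, so that aside should be qualified -- though it plays no role in the proof.
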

\begin{proof}
  This follows from the construction of limits and colimits in $\POSCH$
  described in \cite{Tho09}.
\end{proof}

We call an injective monotone continuous map $m \colon X\to Y$ between partially
ordered compact spaces an \df{embedding} in $\POSCH$ whenever $m$ is an order
embedding, that is,
\[
  x\leq y\iff m(x)\leq m(y)
\]
for all $x,y\in X$. Note that the embeddings in $\POSCH$ are, up to isomorphism,
the closed subspace inclusions with the induced order. More generally, a cone
$(f_i \colon X\to Y_i)_{i\in I}$ in $\POSCH$ is called initial whenever, for all
$x_0,x_1\in X$,
\[
  x_0\le x_1\iff \forall i\in I\,.\,f_i(x_0)\le f_i(x_1).
\]
In fact, this condition is equivalent to affirm that the cone
$(f_i \colon X\to Y_i)_{i\in I}$ is initial with respect to the forgetful
functor $\POSCH\to\COMPHAUS$ (see \cite{Tho09}). The following result of Nachbin
is crucial for our work.

\begin{theorem}\label{d:thm:2}
  The unit interval $[0,1]$ is injective in $\POSCH$ with respect to embeddings.
\end{theorem}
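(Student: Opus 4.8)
We argue for the equivalent statement that, whenever $m\colon X\hookrightarrow Y$ is the
  inclusion of a closed subspace of a partially ordered compact space carrying the induced
  order, every monotone continuous map $f\colon X\to[0,1]$ extends along $m$ to a monotone
  continuous map $g\colon Y\to[0,1]$; by the remark preceding the theorem this suffices. Two
  elementary consequences of the defining property of $\POSCH$ will be used throughout: for a
  closed set $C\subseteq Y$, both $\downc C$ and $\upc C$ are again closed, since each is the
  image of a closed subset of $Y\times Y$ under one of the two projections $Y\times Y\to Y$,
  which are closed maps because $Y$ is compact; consequently the complement of $\upc C$ is an
  open down-set, and the complement of $\downc C$ is an open up-set.

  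The plan is to turn the extension problem into an insertion problem. Consider the lower and
  upper monotone envelopes of $f$,
  \[
    \underline f(y)=\sup\{f(x)\mid x\in X,\ x\le y\},\qquad
    \overline f(y)=\inf\{f(x)\mid x\in X,\ y\le x\},
  \]
  with the conventions $\sup\emptyset=0$ and $\inf\emptyset=1$. Both maps are monotone, take
  values in $[0,1]$, and restrict to $f$ on $X$. Moreover $\underline f$ is upper
  semicontinuous and $\overline f$ is lower semicontinuous, because their strict sub- and
  superlevel sets are complements of up- resp.\ down-closures of closed sets, hence open by the
  observation above. Finally $\underline f\le\overline f$: if $x\le y\le x'$ with $x,x'\in X$,
  then $x\le x'$ in $Y$, hence $x\le x'$ in $X$ since $m$ is an order embedding, hence
  $f(x)\le f(x')$ -- and this is exactly the point where the hypothesis that $m$ is an
  embedding, and not merely a monotone continuous injection, is needed. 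It therefore suffices
  to produce a monotone continuous map $g\colon Y\to[0,1]$ with $\underline f\le g\le\overline
  f$, since then $\underline f|_X\le g|_X\le\overline f|_X$ forces $g|_X=f$.

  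This insertion step is an order-theoretic form of the Kat\v{e}tov--Tong theorem, and the plan
  is to prove it by the classical Urysohn-type construction carried out within the lattice of
  open down-sets of $Y$. Its one genuinely non-formal ingredient is that every partially
  ordered compact space is \emph{normally ordered}: two disjoint closed sets, one a down-set
  and the other an up-set, are contained in disjoint open sets of the respective kinds. This is
  Nachbin's theorem (see \cite{Nac65}), the order-theoretic analogue of the normality of a
  compact Hausdorff space. Granting it, one builds by induction over an enumeration of
  $\Q\cap[0,1]$ a family of open down-sets $(W_q)_q$ interpolating between the sublevel sets of
  $\overline f$ and those of $\underline f$ and satisfying $\overline{W_q}\subseteq W_{q'}$
  whenever $q<q'$; at each step one must insert a new open down-set between a closed set and an
  open set that are both down-sets, and here the only subtlety is that in $\POSCH$ the closure
  of a down-set need not be a down-set, so one first replaces the closures $\overline{W_q}$ by
  the closed down-sets $\downc\overline{W_q}$ before applying normal-orderedness. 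Then
  $g(y)=\inf(\{1\}\cup\{q\mid y\in W_q\})$ is monotone because every $W_q$ is a down-set, is
  continuous by the usual argument from the nesting $\overline{W_q}\subseteq W_{q'}$, and is
  squeezed between $\underline f$ and $\overline f$. The main difficulty of the whole argument
  is thus concentrated in the normal-orderedness of compact ordered spaces together with
  running the Urysohn/Kat\v{e}tov construction inside the open down-sets; the reduction and the
  envelope argument are routine bookkeeping. A detailed development of these facts can be found
  in \cite{Nac65,GHK+80}.
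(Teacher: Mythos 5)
The paper does not prove this statement at all: its ``proof'' is the citation \emph{See [Nac65, Theorem~6]}, i.e.\ Nachbin's ordered Tietze extension theorem is taken as a black box. Your proposal therefore necessarily differs in kind --- you are sketching the proof that lives inside the cited reference. As a sketch it is structurally sound and matches the standard route: the reduction of injectivity to extension along closed subspace inclusions is the right reading of the statement; the observation that $\upc C$ and $\downc C$ are closed for closed $C$ (projections along a compact factor are closed) is correct and is the basic workhorse; the envelopes $\underline f,\overline f$ are monotone, semicontinuous in the ways you claim (note $\{\underline f\ge c\}=\upc\bigl(X\cap f^{-1}([c,1])\bigr)$ exactly, since the supremum is attained on the compact set $X\cap\downc\{y\}$), and you correctly isolate the inequality $\underline f\le\overline f$ as the one place where the order-embedding hypothesis, rather than mere injectivity, is indispensable. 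The reduction to inserting a continuous monotone map between a u.s.c.\ monotone map and an l.s.c.\ monotone map, using that compact ordered spaces are normally ordered, is exactly Nachbin's own chain of results.

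One caveat on the step you describe as ``the classical Urysohn-type construction \ldots by induction over an enumeration of $\Q\cap[0,1]$'': taken literally, this is the argument that famously fails for the Kat\v{e}tov--Tong insertion theorem (it is the gap in Kat\v{e}tov's original 1951 proof). The problem is maintaining the inclusions $\{\overline f\le q\}\subseteq W_{q'}$ for \emph{all} rationals $q<q'$, including those enumerated after $q'$, while simultaneously keeping $W_{q'}\subseteq\{\underline f\le q'\}$ and the nesting $\downc\overline{W_q}\subseteq W_{q'}$; the naive induction only controls previously placed indices, and the sets $\{\overline f<q'\}$ and $\{\underline f\le q'\}$ are neither open nor closed, so a single application of normal orderedness does not suffice. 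The correct proofs (Tong's, Kat\v{e}tov's correction, Dowker's for countably paracompact spaces, or Nachbin's own Theorems~4--5 in the ordered setting) require a genuinely more careful organisation, e.g.\ a double limit of Urysohn functions or a separate treatment of the two semicontinuous envelopes. Since you explicitly defer the details to \cite{Nac65,GHK+80}, and since the paper defers the entire theorem to the same source, this does not invalidate the proposal --- but you should not present that step as routine; it is precisely where the mathematical content of Nachbin's theorem is concentrated. Your remark that one must pass from $\overline{W_q}$ to $\downc\overline{W_q}$ before invoking normal orderedness is a correct and necessary refinement in the ordered setting.
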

\begin{proof}
  See \cite[Theorem 6]{Nac65}.
\end{proof}

As we have shown in~\cite{HN16_tmp}, the theorem above has the following
important consequences.

\begin{theorem}
  The regular monomorphisms in $\POSCH$ are, up to isomorphism, the closed
  subspaces with the induced order. Consequently, $\POSCH$ has (Epi,Regular
  mono)-factorisations and the unit interval $[0,1]$ is a regular injective
  regular cogenerator of $\POSCH$.
\end{theorem}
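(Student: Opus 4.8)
The plan is to establish the three assertions one after another, using only Nachbin's injectivity Theorem~\ref{d:thm:2} and the fact that the forgetful functors $\POSCH\to\COMPHAUS$ and $\POSCH\to\POST$ preserve limits. The first half is immediate: if $q\colon E\to X$ equalises a pair $f,g\colon X\to Y$, then limit preservation into $\COMPHAUS$ identifies the underlying space of $E$ with the closed subspace $\{x\in X\mid f(x)=g(x)\}$ of $X$, while limit preservation into $\POST$ identifies the order of $E$ with the one induced from $X$; hence $q$ is a closed subspace inclusion carrying the induced order, i.e.\ an embedding. So every regular monomorphism is an embedding.

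The substance is the converse. Let $m\colon A\to X$ be an embedding, which we may assume to be the inclusion of a closed subspace with the induced order; I would exhibit $A$ explicitly as an equaliser. Let $J$ be the set of all pairs $(u,v)$ of monotone continuous maps $X\to[0,1]$ with $u|_A=v|_A$, and let $p,q\colon X\to[0,1]^J$ be the induced maps; then $m$ factors through $\mathrm{eq}(p,q)$, whose underlying set is $E=\{x\in X\mid u(x)=v(x)\text{ for all }(u,v)\in J\}$, and it remains to check $E\subseteq A$. Given $x_0\in X\setminus A$, the key is to produce two monotone continuous maps $X\to[0,1]$ agreeing on $A$ but differing at $x_0$. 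Inside $A$ the sets $L=A\cap\downc x_0$ and $U=A\cap\upc x_0$ are closed and disjoint (a common element would be $x_0\notin A$), hence each is the complement of the other in $L\cup U$ and so clopen there; thus the map $L\cup U\to[0,1]$ sending $L$ to $0$ and $U$ to $1$ is continuous, and it is monotone because $z_1\le z_2$ with $z_1\in U$, $z_2\in L$ would force $x_0\le z_1\le z_2\le x_0$. By Theorem~\ref{d:thm:2} it extends to $h\colon A\to[0,1]$. Since $A$ is closed, $\{x_0\}$ is isolated in $A\cup\{x_0\}$, so $h$ together with the value $0$ at $x_0$ (resp.\ $1$ at $x_0$) is a monotone continuous map $A\cup\{x_0\}\to[0,1]$ — monotonicity again from the descriptions of $L,U$ — which extends by Theorem~\ref{d:thm:2} to $\tilde h^{0}$ (resp.\ $\tilde h^{1}$) on $X$. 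Then $(\tilde h^{0},\tilde h^{1})\in J$ while $\tilde h^{0}(x_0)=0\neq 1=\tilde h^{1}(x_0)$, so $x_0\notin E$. This identifies $A$ with $\mathrm{eq}(p,q)$, so $m$ is a regular monomorphism.

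For the consequences, an $(\text{Epi},\text{Regular mono})$-factorisation of $f\colon X\to Y$ is obtained by taking the set-theoretic image $I=f(X)$ with the subspace topology and the order induced from $Y$: $I$ is compact, closed in $Y$ (a continuous image of a compact space inside a Hausdorff space), hence a partially ordered compact space, and $f$ factors as the surjection $X\to I$ — an epimorphism, since morphisms in $\POSCH$ are determined by their underlying maps — followed by the embedding $I\hookrightarrow Y$, which is a regular monomorphism by the previous step. For the last claim I would first observe that whenever $x\not\le y$ the pair $(\upc x,\downc y)$ is handled exactly as $(U,L)$ above, so Theorem~\ref{d:thm:2} yields $h\colon X\to[0,1]$ with $h(x)=1>0=h(y)$; hence monotone continuous maps into $[0,1]$ separate points and jointly reflect the order, which makes the canonical cone $(X\to[0,1])$ point-separating and initial, and therefore makes $e_X\colon X\to[0,1]^{\hom(X,[0,1])}$ an injective, monotone, order-reflecting continuous map out of a compact space into a Hausdorff space — i.e.\ an embedding, and so a regular monomorphism. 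Thus $[0,1]$ is a regular cogenerator, and it is regular injective because, by the first two steps, regular monomorphisms are precisely the embeddings, with respect to which $[0,1]$ is injective by Theorem~\ref{d:thm:2}.

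The main obstacle is the middle step, specifically the construction for $x_0\notin A$: a single $[0,1]$-valued monotone map cannot separate $x_0$ from $A$ while agreeing on $A$ when $x_0$ is order-sandwiched between elements of $A$, so one must first manufacture an ``order-Urysohn'' function on $A$ for the disjoint pair $(A\cap\downc x_0,\,A\cap\upc x_0)$ and only afterwards extend it across $x_0$ in two distinct ways. The clopenness observations that keep the intermediate maps continuous, and the antisymmetry arguments that keep them monotone, are the points requiring care; everything else is routine bookkeeping.
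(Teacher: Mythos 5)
Your proposal is correct, and it takes essentially the approach the paper intends: the paper itself only cites \cite{HN16_tmp} for this theorem, but the key step in your argument --- building an order-Urysohn function on $(A\cap\downc x_0)\cup(A\cap\upc x_0)$ and then extending it across $x_0$ in two different ways via Theorem~\ref{d:thm:2}, so that $A$ becomes the equaliser of a pair of maps into a power of $[0,1]$ --- is precisely the construction the paper reprises in Lemma~\ref{d:lem:3}. The remaining steps (equalisers computed via the limit-preserving forgetful functors, image factorisation, and the initial point-separating cone into $[0,1]$) are carried out correctly.
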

\begin{proof}
  Based on Theorem~\ref{d:thm:2}, the characterisation of regular monomorphisms
  as precisely the embeddings can be found in \cite{HN16_tmp}, as well as a
  proof for the fact that $[0,1]$ is a regular cogenerator of $\POSCH$.
\end{proof}

We close this section with the following characterisation of cofiltered limits
in $\COMPHAUS$ which goes back to \cite[Proposition~8, page~89]{Bou66} (see also
\cite[Proposition~4.6]{Hof02} and \cite{HNN16_tmp}).

\begin{theorem}\label{thm:codirected-limits-COMPHAUS}
  Let $D \colon I\to\COMPHAUS$ be a cofiltered diagram and
  $(p_i \colon L\to D(i))_{i\in I}$ a cone for $D$. The following conditions are
  equivalent:
  \begin{tfae}
  \item The cone $(p_i \colon L\to D(i))_{i\in I}$ is a limit of $D$.
  \item The cone $(p_i \colon L\to D(i))_{i\in I}$ is mono and, for every
    $i\in I$, the image of $p_i$ is equal to the intersection of the images of
    all $D(k \colon j\to i)$ with codomain $i$:
    \[
      \im p_i=\bigcap_{j{\to}i}\im D(j\xrightarrow{\,k\,} i).
    \]
  \end{tfae}
\end{theorem}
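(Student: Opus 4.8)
The plan is to prove both implications using a concrete model of the limit. Realise $L' = \lim D$ as the closed subspace of the compact Hausdorff space $\prod_{l\in I} D(l)$ consisting of the compatible families $(x_l)_l$ (those with $D(k)(x_j) = x_i$ for every $k\colon j\to i$ in $I$), with the $p'_i$ the restricted product projections; equivalently $L' = \bigcap_{k}Q_k$ where $Q_k = \{(x_l)_l \mid D(k)(x_{\mathrm{dom}\,k}) = x_{\mathrm{cod}\,k}\}$ is closed in $\prod_l D(l)$. The family $(p'_i)$ is clearly collectively monic (i.e.\ ``mono'' as a cone), and $\im p'_i \subseteq \im D(k)$ for every $k\colon j\to i$ because $p'_i = D(k)\circ p'_j$; hence $\im p'_i \subseteq \bigcap_{j\to i}\im D(j\xrightarrow{\,k\,} i)$. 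Since any limit cone for $D$ is isomorphic to this one, it suffices to prove $(i)\Rightarrow(ii)$ for this particular cone.

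The heart of $(i)\Rightarrow(ii)$ is the reverse inclusion. Fix $y$ in $\bigcap_{j\to i}\im D(j\xrightarrow{\,k\,} i)$; the goal is to produce a compatible family $(x_l)_l$ with $x_i = y$, i.e.\ to show $L' \cap (p'_i)^{-1}(\{y\})$ is non-empty. By compactness of $\prod_l D(l)$ it is enough to show that each finite intersection $(p'_i)^{-1}(\{y\}) \cap Q_{k_1}\cap\cdots\cap Q_{k_n}$ is non-empty. Only finitely many objects occur among $i$ and the domains and codomains of $k_1,\dots,k_n$; since $I$ is cofiltered, a routine iteration -- pick an object mapping to all of them, then repeatedly use the defining property that a parallel pair of morphisms in $I$ is equalised by some morphism -- yields an object $j_\ast$ together with morphisms $h_l\colon j_\ast \to l$ to each such $l$, compatible with the $k_m$, and in particular a morphism $h_i\colon j_\ast\to i$. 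As $h_i$ has codomain $i$, we have $y\in\im D(h_i)$; choosing $x_{j_\ast}\in D(h_i)^{-1}(\{y\})$ and setting $x_l = D(h_l)(x_{j_\ast})$ for the relevant $l$ (arbitrary otherwise) gives a point of that finite intersection. Hence $y \in \im p'_i$. (Alternatively, this inclusion can be deduced from the classical fact that a cofiltered limit of non-empty compact Hausdorff spaces is non-empty, cf.\ the references cited above.)

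For $(ii)\Rightarrow(i)$, let $\phi\colon L\to L'$ be the comparison map with $p'_i\circ\phi = p_i$. It is continuous and injective, the latter since $(p_i)$ is collectively monic; as $L$ is compact and $L'$ Hausdorff, $\phi(L)$ is closed, so it remains to show $\phi$ is surjective, after which $\phi$ is a homeomorphism and $(p_i)$ is a limit cone. Take $z\in L'$. By the already proven implication applied to $L'$ we have $\im p'_i = \bigcap_{j\to i}\im D(j\to i)$, which by hypothesis (ii) equals $\im p_i = p'_i(\phi(L))$; hence for each $i$ the set $T_i = \phi(L)\cap(p'_i)^{-1}(\{p'_i(z)\})$ is non-empty and closed in the compact space $\phi(L)$. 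Given $i_1,\dots,i_n$, cofilteredness provides $l$ with morphisms $l\to i_m$ for all $m$, and then $T_l\subseteq T_{i_1}\cap\cdots\cap T_{i_n}$ because $p'_{i_m} = D(l\to i_m)\circ p'_l$; so the $T_i$ have the finite intersection property and $\bigcap_i T_i\neq\emptyset$. Any $w$ in this intersection satisfies $p'_i(w) = p'_i(z)$ for all $i$, whence $w = z$ by collective monicity of $(p'_i)$, so $z\in\phi(L)$.

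I expect the main obstacle to be the combinatorial lemma invoked in $(i)\Rightarrow(ii)$ -- that every finite subdiagram of a cofiltered category admits a cone out of some object -- together with the bookkeeping needed to turn it, via compactness of $\prod_l D(l)$, into the non-emptiness of $L'\cap(p'_i)^{-1}(\{y\})$; by contrast the $(ii)\Rightarrow(i)$ direction is a comparatively routine finite-intersection-property argument.
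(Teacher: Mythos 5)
Your proof is correct: the paper itself does not prove this theorem but only cites Bourbaki, \cite{Hof02} and \cite{HNN16_tmp}, and your argument is essentially the standard one found there --- realising the limit inside $\prod_l D(l)$, using compactness plus the fact that every finite diagram in a cofiltered category admits a cone for $(i)\Rightarrow(ii)$, and a finite-intersection-property argument for the comparison map in $(ii)\Rightarrow(i)$. The only (harmless) notational slip is writing $(p'_i)^{-1}(\{y\})$ where the preimage under the full product projection $\pi_i$ is meant when intersecting with the sets $Q_k$ inside $\prod_l D(l)$.
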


We emphasise that this intrinsic characterisation of cofiltered limits in
$\COMPHAUS$ is formally dual to the following well-known description of filtered
colimits in $\SET$ (see \cite{AR94}).

\begin{theorem}\label{d:thm:3}
  Let $D:I\to\SET$ be a filtered diagram and $(c_i \colon D(i)\to C)_{i\in I}$ a
  compatible cocone $(c_i \colon D(i)\to C)_{i\in I}$ for $D$. The following
  conditions are equivalent:
  \begin{tfae}
  \item The cocone $(c_i \colon D(i)\to C)_{i\in I}$ is a colimit of $D$.
  \item The cocone $(c_i \colon D(i)\to C)_{i\in I}$ is epi and, for all
    $i\in I$, the coimage of $c_i$ is equal to the cointersection of the
    coimages of all $D(k \colon i\to j)$ with domain $i$:
    \[
      c_i(x)=c_i(y)\iff \exists(i\xrightarrow{\,k\,}j)\in I\,.\,D(k)(x)=D(k)(y),
    \]
    and $x,y\in D(i)$.
  \end{tfae}
\end{theorem}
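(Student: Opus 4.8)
The plan is to compare the given cocone with the standard construction of the filtered colimit of $D$ in $\SET$. First I would recall that the colimit can be realised as a quotient $C_0=\bigl(\coprod_{i\in I}D(i)\bigr)/{\sim}$, where $\sim$ is the least equivalence relation identifying $x\in D(i)$ with $D(k)(x)\in D(j)$ for every $k\colon i\to j$, with colimit cocone $\lambda_i\colon D(i)\to C_0$. The key preliminary observation is that, since $I$ is filtered, this equivalence relation has the explicit form: for $x\in D(i)$ and $y\in D(j)$ one has $\lambda_i(x)=\lambda_j(y)$ if and only if there are morphisms $k\colon i\to l$ and $k'\colon j\to l$ in $I$ with $D(k)(x)=D(k')(y)$. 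Reflexivity, symmetry, and closure under the generating identifications are immediate; transitivity is where filteredness enters, by completing two spans with a common apex into a single one. In particular the canonical cocone $(\lambda_i)$ is jointly surjective and satisfies a two-morphism version of condition~(ii).

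For (i)$\Rightarrow$(ii), given that $(c_i)$ is a colimit there is a unique isomorphism $\varphi\colon C_0\to C$ with $\varphi\circ\lambda_i=c_i$; joint surjectivity (``epi'') of $(c_i)$ follows at once. For the equivalence condition I would take $x,y\in D(i)$ with $c_i(x)=c_i(y)$, apply $\varphi^{-1}$ to obtain $k,k'\colon i\to l$ with $D(k)(x)=D(k')(y)$, and then use filteredness once more to pick $m\colon l\to j$ with $m k=m k'$; then $D(mk)(x)=D(mk')(y)=D(mk)(y)$, which is the assertion for the single morphism $mk\colon i\to j$. The reverse implication in the biconditional of~(ii) is just compatibility of the cocone.

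For (ii)$\Rightarrow$(i), assume $(c_i)$ is jointly surjective and satisfies the displayed condition, and let $(g_i\colon D(i)\to Z)$ be an arbitrary compatible cocone. I would define $h\colon C\to Z$ by $h(c_i(x))=g_i(x)$; a representative of any element of $C$ exists by joint surjectivity. Well-definedness is the substantive point: if $c_i(x)=c_j(y)$, choose $k\colon i\to l$ and $k'\colon j\to l$ by filteredness, so that $c_l(D(k)(x))=c_l(D(k')(y))$, and then condition~(ii) at $l$ supplies $m\colon l\to p$ with $D(mk)(x)=D(mk')(y)$; compatibility of $(g_i)$ then forces $g_i(x)=g_p(D(mk)(x))=g_p(D(mk')(y))=g_j(y)$. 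By construction $h\circ c_i=g_i$, and uniqueness of $h$ is immediate from joint surjectivity of $(c_i)$, so $(c_i)$ is a colimit.

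The \textbf{main obstacle} is really only the bookkeeping: establishing the explicit description of $\sim$ (transitivity via filteredness) and moving cleanly between the ``two morphisms into a common object'' form natural for the colimit quotient and the ``single morphism'' form appearing in condition~(ii), the passage being mediated each time by a coequalising arrow provided by filteredness. Once this is set up, both implications are routine diagram chases; alternatively, one may simply invoke the standard treatment in \cite{AR94}.
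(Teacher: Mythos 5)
Your proof is correct. The paper itself offers no proof of this statement — it records it as the well-known description of filtered colimits in $\SET$ and simply cites \cite{AR94} — and your argument is precisely the standard one that the citation points to: realise the colimit as the quotient of $\coprod_i D(i)$ by the filtered-colimit equivalence relation, and translate between the ``two morphisms into a common object'' description of that relation and the single-morphism form in condition (ii) using the coequalising arrows supplied by filteredness. All the steps (the explicit description of $\sim$, well-definedness of the induced map in (ii)$\Rightarrow$(i), and the reduction to a single morphism in (i)$\Rightarrow$(ii)) are handled correctly.
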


\section{The quasivariety $\POSCH^\op$}
\label{sec:quas-posch-concr}

The principal aim of this section is to identify $\POSCH^\op$ as a
$\aleph_1$-ary quasivariety; moreover, we give a more concrete presentation of
the algebra structure of $\POSCH^\op$. To achieve this goal, we built on
\cite{HN16_tmp} where $\POSCH^\op$ is shown to be equivalent to the category of
certain $[0,1]$-enriched categories, for various quantale structures on the
complete lattice $[0,1]$. Arguably, the most convenient quantale structure is
the \df{\L{}ukasiewicz tensor} given by $u\luk v=\max(0,u+v-1)$, for
$u,v\in [0,1]$. For this quantale, a \df{$[0,1]$-category} is a set $X$ equipped
with a mapping $a\colon X\times X\to [0,1]$ so that
\begin{align*}
  1\leqslant a(x,x) &&\text{and}&& a(x,y)\luk a(y,z)\leqslant a(x,z),
\end{align*}
for all $x,y\in X$. Each $[0,1]$-category $(X,a)$ induces the order relation
(that is, reflexive and transitive relation)
\[
  x\leqslant y\hspace{1ex}\text{whenever}\hspace{1ex} 1\leqslant
  a(x,y)\hspace{2em}(x,y\in X)
\]
on $X$. A $[0,1]$-category is called \df{separated} whenever this order relation
is anti-symmetric. As we explain in Section~\ref{sec:aleph_1-copr-spac},
categories enriched in this quantale can be also thought of as metric spaces.

To state the duality result of \cite{HN16_tmp}, we need to impose certain
(co)completeness conditions on $[0,1]$-categories. Since these conditions will
not be used explicitly in this paper, we simply refer to \cite{HN16_tmp} for
their definitions. Eventually, we consider the category $\catA$ with objects all
separated finitely cocomplete $[0,1]$-categories with a monoid structure that,
moreover, admit $[0,1]$-powers; the morphisms of $\catA$ are the finitely
cocontinuous $[0,1]$-functors preserving the monoid structure and the
$[0,1]$-powers. Alternatively, these structures can be described algebraically
as sup-semilattices with actions of $[0,1]$; therefore we simply refer to
\cite{Kel82,Stu14} for information about enriched categories and proceed by
describing $\catA$ as a category of algebras.

\begin{remark}\label{d:rem:1}
  The category $\catA$ together with its canonical forgetful functor
  $\catA\to\SET$ is a $\aleph_1$-ary quasivariety; we recall now the
  presentation given in \cite{HN16_tmp}. For more information on varieties and
  quasivarieties we refer to \cite{AR94}. Firstly, the set of operation symbols
  consists of
  \begin{itemize}
  \item the nullary operation symbols $\bot$ and $\top$;
  \item the unary operation symbols $-\luk u$ and $-\pitchfork u$, for each
    $u\in [0,1]$;
  \item the binary operation symbols $\vee$ and $\ocircle$.
  \end{itemize}
  Secondly, the algebras for this theory should be sup-semilattices with a
  supremum-preserving action of $[0,1]$; writing $x\le y$ as an abbreviation for
  the equation $y=x\vee y$, this translates to the equations and implications
  \begin{align*}
    x\vee x & =x,&  x\vee (y\vee z)&=(x\vee y)\vee z,& x\vee\bot&=x, & x\vee y&=y\vee x,\\
    x\luk 1&=x,& (x\luk u)\luk v&=x\luk (u\luk v),& \bot\luk
                                                    u&=\bot,& (x\vee y)\luk u &=(x\luk u)\vee(y\luk u),
  \end{align*}
  \[
    x\luk u\leq x\luk v \hspace{1em}\text{ and }\hspace{1em} \bigwedge_{u\in
      S}(x\luk u\le y)\Longrightarrow (x\luk v\le y) \hspace{1em}(S\subseteq
    [0,1]\text{ countable and } v=\sup S).
  \]
  
  The algebras defined by the operations $\bot$, $\vee$ and $-\luk u$
  ($u\in [0,1]$) and the equations above are precisely the separated
  $[0,1]$-categories with finite weighted colimits. Such a $[0,1]$-category
  $(X,a)$ has all \emph{powers} $x\pitchfork u$ ($x\in X,u\in [0,1]$) if and
  only if, for all $u\in [0,1]$, $-\luk u$ has a right adjoint $-\pitchfork u$
  with respect to the underlying order. Therefore we add to our theory the
  implications
  \[
    x\luk u\le y \iff x\le y\pitchfork u,
  \]
  for all $u\in [0,1]$. Finally, regarding $\ocircle$, we impose the commutative
  monoid axioms with neutral element the top-element:
  \begin{align*}
    x\ocircle y&=y\ocircle x, & x\ocircle(y\ocircle z)&=(x\ocircle y)\ocircle z, & x\ocircle \top& =x, & \top &\le x.
  \end{align*}
  Moreover, we require this multiplication to preserve suprema and the action
  $-\odot u$ (for $u\in [0,1]$) in each variable:
  \begin{align*}
    x\ocircle(y\vee z) &= (x\ocircle y)\vee(x\ocircle z),
    & x\ocircle \bot &= \bot,
    & x\ocircle(y\odot u) &= (x\ocircle y)\odot u.
  \end{align*}
\end{remark}
\begin{remark}
  The unit interval $[0,1]$ becomes an algebra for the theory above with
  $\ocircle=\odot$ and $v\pitchfork u=\min(1,1-u+v)=1-\max(0,u-v)$, and the
  usual interpretation of all other symbols.
\end{remark}

The following result is in \cite{HN16_tmp}.

\begin{theorem}
  The functor
  \[
    C\colon \POSCH^\op\longrightarrow\catA
  \]
  sending $f\colon X\to Y$ to $Cf\colon CY\to CX,\,\psi\mapsto\psi\cdot f$ is
  fully faithful, here the structure on
  \[
    CX=\{f\colon X\to[0,1]\mid f\text{ is monotone and continuous}\}
  \]
  is defined pointwise.
\end{theorem}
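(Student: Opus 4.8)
The plan is to realise $[0,1]$ as a ``schizophrenic object'' linking $\POSCH$ and $\catA$ and to reduce the statement to a double-dualisation fact. Because $\catA$ is an $\aleph_1$-ary quasivariety (Remark~\ref{d:rem:1}) whose operations are interpreted pointwise on each $CX$, evaluation at a point is a homomorphism $\ev_x\colon CX\to[0,1]$, $f\mapsto f(x)$. Dually, for $A\in\catA$ let $SA:=\catA(A,[0,1])$ carry the pointwise order and the topology inherited from the product $[0,1]^{|A|}$, where $|A|$ is the underlying set of $A$; then $SA$ is a partially ordered compact space, since it is closed in $[0,1]^{|A|}$ --- being a homomorphism with respect to each operation $\bot$, $\top$, $\vee$, the monoid multiplication, $-\luk u$ and $-\pitchfork u$ ($u\in[0,1]$) is a closed condition --- and the pointwise order is also closed. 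It is then standard (and easy, using that the operations on $CX$ are computed pointwise) that $C$ is fully faithful if and only if the double-dualisation map
\[
  \ev_X\colon X\longrightarrow SCX=\catA(CX,[0,1]),\qquad x\longmapsto\ev_x ,
\]
is an isomorphism of $\POSCH$ for every $X$.

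First I would dispose of the easy half. The map $\ev_X$ is a morphism of $\POSCH$: each $\ev_x$ preserves all operations, hence lies in $SCX$, and $x\mapsto\ev_x$ is monotone and continuous because it is a corestriction of the canonical map $X\to[0,1]^{CX}$. Since $[0,1]$ is a \emph{regular} cogenerator of $\POSCH$, the cone $(f\colon X\to[0,1])_{f\in CX}$ is point-separating and initial, so $\ev_X$ is injective and an order-embedding; being a continuous map from a compact space to a Hausdorff space, it is therefore an embedding onto a closed subspace of $SCX$. Hence the whole content of the theorem is the \emph{surjectivity} of $\ev_X$: every $\catA$-homomorphism $\varphi\colon CX\to[0,1]$ equals $\ev_x$ for some $x\in X$.

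I expect this surjectivity to be the main obstacle. The approach would be a compactness argument: for $f\in CX$ and $\varepsilon>0$ the set $K_{f,\varepsilon}=\{x\in X:\ |f(x)-\varphi(f)|\le\varepsilon\}$ is closed in $X$, and if the family $\{K_{f,\varepsilon}\}$ has the finite intersection property then, $X$ being compact, any point of $\bigcap_{f,\varepsilon}K_{f,\varepsilon}$ realises $\varphi$ as an evaluation. Verifying the finite intersection property is the crux: given $f_1,\dots,f_n\in CX$ and $\varepsilon>0$ one must exhibit a single $x\in X$ with $|f_i(x)-\varphi(f_i)|\le\varepsilon$ for every $i$, the only leverage being that $\varphi$ commutes with $\vee$, the monoid multiplication, $-\luk u$ and $-\pitchfork u$ (all pointwise on $CX$), and consequently fixes every constant function (since $\varphi$ preserves $\top$ and $-\luk u$). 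This is a Stone--Weierstrass type density statement for $CX\subseteq[0,1]^X$, in the ordered setting of Nachbin's separation theorems, and it is here that the full algebraic signature of $\catA$ --- in particular the scalar actions $-\luk u$ and the powers $-\pitchfork u$ --- must be used together with compactness of $X$.

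Granting surjectivity, full faithfulness follows from the reduction above; alternatively one argues directly. Faithfulness is immediate: for $g,h\colon X\to Y$, the equality $Cg=Ch$ says $\psi(g(x))=\psi(h(x))$ for all $\psi\in CY$ and all $x\in X$, whence $g=h$ because $[0,1]$ is a cogenerator of $\POSCH$. For fullness, let $\varphi\colon CY\to CX$ be a morphism of $\catA$; for each $x\in X$ the composite $\ev_x\circ\varphi\colon CY\to[0,1]$ is a $\catA$-homomorphism, hence equal to $\ev_{f(x)}$ for a unique $f(x)\in Y$; using that $\ev_Y$ is an embedding one checks that $f\colon X\to Y$ is monotone and continuous, and unwinding the definitions yields $Cf=\varphi$.
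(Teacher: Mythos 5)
Your overall architecture is sound: reducing full faithfulness to the statement that the double-dualisation map $\ev_X\colon X\to\catA(CX,[0,1])$ is an isomorphism is correct, and the ``easy half'' (that $\ev_X$ is a point-separating initial cone, hence a closed embedding, because $[0,1]$ is a regular cogenerator of $\POSCH$) is fine; so is the derivation of faithfulness and of fullness-from-surjectivity. The paper itself gives no in-text proof of this theorem (it cites [HN16]), but judged on its own terms your proposal stops exactly where the theorem begins. The entire content of the statement is the surjectivity of $\ev_X$ --- that every $\catA$-homomorphism $\varphi\colon CX\to[0,1]$ is a point evaluation --- and you explicitly defer it: the finite intersection property of the sets $K_{f,\varepsilon}$ is announced as ``the crux'' and described as ``a Stone--Weierstrass type density statement'', but no argument is given. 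Identifying where the difficulty lives is not the same as resolving it, so this is a genuine gap rather than a routine omission.

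The gap is not one of detail, because the obvious route to the finite intersection property is blocked. Classically one would feed $\bigvee_i|f_i-\varphi(f_i)|$ to $\varphi$; here $u\ominus f$ (truncated subtraction with the \emph{function} in the second argument) is neither expressible in the signature of $\catA$ nor monotone, so $|f-\varphi(f)|$ need not lie in $CX$ --- this is exactly the asymmetry the paper itself flags when it notes that the difference of two monotone maps need not be monotone. What the operations do yield is one-sided: using the $\ocircle$-powers $f^{\ocircle n}=\max(0,nf-(n-1))$ and compactness one shows that $T=\bigcap\{f^{-1}(1)\mid\varphi(f)=1\}$ is a nonempty closed down-set, and applying $\varphi$ to $f\pitchfork\varphi(f)$ shows that every $x\in T$ satisfies $f(x)\geqslant\varphi(f)$ for all $f\in CX$. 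The reverse inequality, i.e.\ locating a point of $T$ at which no $f$ overshoots $\varphi(f)$, requires a genuinely order-theoretic supplement (an extremal point of $T$ together with Nachbin-type separation, or an equivalent device), and nothing in your proposal supplies it. Until that step is carried out, you have proved faithfulness but not fullness.
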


\begin{remark}\label{d:rem:2}
  The theorem above remains valid if we augment the algebraic theory of $\catA$
  by further operation symbols corresponding to monotone continuous functions
  $[0,1]^I\to[0,1]$. More precisely, let $\aleph$ be a cardinal and
  $h \colon[0,1]^\aleph\to[0,1]$ be a monotone continuous map. If we add to the
  algebraic theory of $\catA$ a operation symbol of arity $\aleph$, then
  $C\colon\POSCH^\op\to\catA$ lifts to a fully faithful functor from
  $\POSCH^\op$ to the category of algebras for this theory by interpreting the
  new operation symbol in $CX$ point-wise by $h$. Note that all
  $\catA$-morphisms of type $CY\to CX$ preserves this new operation
  automatically.
\end{remark}

\begin{remark}
  Note that $1-u=0\pitchfork u$, for every $u\in [0,1]$. Therefore we can
  express truncated minus $v\ominus u=\max(0,v-u)$ in $[0,1]$ with the
  operations of $\catA$:
  \[
    v\ominus u=0\pitchfork(u\pitchfork v).
  \]
  In particular, every subalgebra $M\subseteq CX$ of $CX$ is also closed under
  truncated minus.
\end{remark}

Since we have chosen the \L{}ukasiewicz tensor, the categorical closure on $CX$
(see \cite{HT10}) coincides with the usual topology induced by the
``$\sup$-metric'' on $CX$; in the sequel we consider this topology on
$C(X)$. One important step towards the identification of the image of
$C\colon \POSCH^\op\longrightarrow\catA$ is the following adaption of the
classical ``Stone--Weierstra\ss{} theorem'' (see \cite{HN16_tmp}).

\begin{theorem}\label{d:thm:1}
  Let $X$ be a partially ordered compact space and $m\colon A\hookrightarrow CX$
  be a subobject of $CX$ in $\catA$ so that the cone $(m(a)\colon X\to[0,1])$ is
  point-separating and initial. Then $m$ is dense in $CX$. In particular, if $A$
  is Cauchy complete, then $m$ is an isomorphism.
\end{theorem}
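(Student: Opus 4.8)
The goal is: given a sub-algebra $m\colon A\hookrightarrow CX$ in $\catA$ such that the evaluation maps $(m(a)\colon X\to[0,1])_{a\in A}$ form a point-separating initial cone, show that $A$ is dense in $CX$ (w.r.t. the sup-metric topology), and conclude that $A=CX$ when $A$ is Cauchy complete.

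The plan is to mimic the lattice-theoretic proof of the Stone–Weierstrass theorem, exploiting that $A$ carries far more structure than a mere vector subspace: it is a sub-sup-semilattice of $CX$ closed under the $[0,1]$-powers $-\pitchfork u$, under $-\luk u$, and (by the Remark following Theorem \ref{d:thm:1}) under truncated minus $v\ominus u$. First I would record the key separation lemma: for any two points $x_0,x_1\in X$ with $x_0\not\le x_1$ and any $u,v\in[0,1]$, there is an element $g\in A$ with $g(x_0)\ge u$ and $g(x_1)\le v$. This is exactly what initiality plus point-separation buys us: since the cone is initial, $x_0\not\le x_1$ forces some $a\in A$ with $m(a)(x_0)\not\le m(a)(x_1)$, i.e. $m(a)(x_1)< m(a)(x_0)$; then by composing with the affine-type operations available in $\catA$ — scaling via $-\luk u$, the power operations $-\pitchfork u$, truncated minus, and joins with the constants $\bot=0$, $\top=1$ (which lie in $A$ as the interpretations of the nullary symbols) — one rescales this single separating function to hit the prescribed thresholds $u$ at $x_0$ and $v$ at $x_1$. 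One must be careful that the order on $X$ is respected: the functions in $CX$ are monotone, so the separation statement is inherently asymmetric (we can push $x_0$ up and $x_1$ down precisely when $x_0\not\le x_1$), and this asymmetry is exactly matched by the monotonicity built into the Nachbin setting.

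Next, fix $f\in CX$ and $\varepsilon>0$; I want $g\in A$ with $\sup_x|f(x)-g(x)|<\varepsilon$. The standard two-step compactness argument runs as follows. For each ordered pair of points, use the separation lemma to produce an element of $A$ that approximates $f$ from below near one point while staying below $f+\varepsilon$ near the other; then, fixing a point $x$, take the join over finitely many such elements (joins exist in $A$) to obtain $g_x\in A$ with $g_x(x)>f(x)-\varepsilon$ and $g_x\le f+\varepsilon$ on a neighbourhood, using compactness of $X$ to reduce to finitely many points and using that joins of monotone continuous functions are again monotone continuous; finally take a meet-like combination — here one uses truncated-minus or the power operations to realise the needed infimum inside $A$, or equivalently dualises by working with $0\pitchfork(-)$ — over finitely many $x$, again extracting a finite subcover by compactness, to land within $\varepsilon$ of $f$ uniformly. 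Because $X$ carries a partial order, one checks at each stage that the monotone functions produced still belong to $CX$; the closedness of $A$ under the relevant $\catA$-operations guarantees the approximants lie in $A$.

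The main obstacle is precisely the interplay between the order and the lattice operations: in the classical Stone–Weierstrass argument one freely forms $\min$ and $\max$ of continuous functions, but here we must stay inside the monotone continuous functions and inside the sub-algebra $A$, so the ``$\min$'' step cannot be done with an unrestricted binary meet (which $\catA$ does not supply). The resolution is to observe that the power operation $-\pitchfork u$ together with truncated minus simulates the bounded meet operations we need — concretely, $f\wedge g$ relative to the scale can be rewritten using $\ominus$ and joins — and that monotonicity of $f$ is exactly what makes the one-sided estimates in the separation lemma suffice, so no genuine binary meet is ever required. Once density is established, the final clause is immediate: a dense sub-object that is Cauchy complete is closed, hence equal to $CX$; so $m$ is an isomorphism.
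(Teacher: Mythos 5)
The paper itself does not prove this statement (it is imported from \cite{HN16_tmp}), so I assess your plan on its own merits. Its skeleton --- an order-sensitive separation lemma followed by the two compactness steps of the lattice form of Stone--Weierstrass --- is the right one. The genuine gap lies in the two places where you say which operations of $\catA$ do the work: you only ever invoke $\vee$, $-\luk u$, $-\pitchfork u$, truncated minus by constants, and the constants, and you never use the monoid operation $\ocircle$. With the \L{}ukasiewicz tensor, $-\luk u$ and $-\pitchfork u$ are clamped \emph{translations}, not scalings, so every term built from your list is a finite join of clamped translates of the arguments and in particular is $1$-Lipschitz in each argument. Hence (a) your separation lemma fails: initiality only yields \emph{some} $a$ with $m(a)(x_1)<m(a)(x_0)$, and this gap may be arbitrarily small; no $1$-Lipschitz term can enlarge a gap $\delta$ at $(x_0,x_1)$, so you cannot reach arbitrary prescribed thresholds $u>v$. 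And (b) your proposed cure for the missing binary meet does not exist: $f\wedge g=f\ominus(f\ominus g)$ requires a truncated minus \emph{between elements}, which is not an operation of $\catA$ (it is antitone in one argument and destroys monotonicity), and $\min$ is provably not in the clone of join-of-clamped-translate terms. The omission is fatal rather than cosmetic: the closure of $\{x\mapsto x/2\}\subseteq C[0,1]$ under exactly the operations you list is point-separating and initial, yet consists of $\tfrac{1}{2}$-Lipschitz functions and so is not dense.

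Both defects are repaired by the operation you leave out, and any correct proof must use it. Since $(a\pitchfork\tfrac12)\ocircle(a\pitchfork\tfrac12)=\min(1,2a)$, iterating this term turns a function of $A$ that vanishes at $x_1$ and is positive at $x_0$ into one taking the values $0$ and $1$ there; combined with $-\luk u$ and joins with constants this yields the separation lemma in the form you need (for $x_0\not\le x_1$, an $f\in A$ with $f(x_0)=\psi(x_0)$ and $f(x_1)\le\psi(x_1)$). For the meet step, the term $\bigl((f_1\pitchfork c)\ocircle\cdots\ocircle(f_n\pitchfork c)\bigr)\luk c$ is bounded above by $\min(c,f_1,\dots,f_n)$ everywhere and equals $c$ wherever all $f_i\ge c$; taking $c=\psi(y)$ and the $f_i$ to be the pairwise separating functions attached to a finite subcover, this is exactly the surrogate for the finite meet that the second compactness step requires, after which your final covering-and-join step and the Cauchy-completeness conclusion go through as you describe. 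Rewrite the two key steps so that the role of $\ocircle$ is explicit.
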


One important consequence of Theorem~\ref{d:thm:1} is the following proposition.

\begin{proposition}\label{prop:unit-interval-copresentable}
  The unit interval $[0,1]$ is $\aleph_1$-copresentable in $\POSCH$.
\end{proposition}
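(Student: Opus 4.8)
The plan is to show that $\POSCH(-,[0,1])\colon\POSCH^\op\to\SET$ — equivalently, via the fully faithful embedding $C$, the underlying-set functor applied to $CX=\POSCH(X,[0,1])$ — preserves $\aleph_1$-cofiltered limits, which amounts in $\POSCH$ to sending $\aleph_1$-filtered colimits of $\POSCH$ to filtered colimits of sets. So let $D\colon I\to\POSCH$ be an $\aleph_1$-filtered diagram with colimit cocone $(c_i\colon D(i)\to C)_{i\in I}$; dually this is an $\aleph_1$-cofiltered diagram in $\POSCH^\op$. I would first pass to $\COMPHAUS$: since the forgetful functor $\POSCH\to\COMPHAUS$ preserves limits and detects them via initial cones, the limit $L$ of the dual diagram $D^\op\colon I^\op\to\COMPHAUS$ is computed in $\COMPHAUS$ and then equipped with the initial order, and by Theorem~\ref{thm:codirected-limits-COMPHAUS} the limit projections $p_i\colon L\to D^\op(i)$ are jointly monic with $\im p_i=\bigcap_{j\to i}\im D^\op(j\to i)$.

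Next I would produce the comparison map. A monotone continuous function $\varphi\colon L\to[0,1]$ is given; I must show it factors through some $p_i$. Consider the subobject of $C(L)$ in $\catA$ (using Remark~\ref{d:rem:1} and Theorem~\ref{d:thm:1}) generated by the functions $\varphi\cdot p_i$ pulled back along the projections, i.e.\ by $\bigcup_i p_i^*(C(D^\op(i)))$; this union is a directed union of subalgebras because $I$ is filtered, hence itself a subalgebra $M\subseteq C(L)$. The cone $(m(a)\colon L\to[0,1])_{a\in M}$ is point-separating: if $x\ne y$ in $L$ then, since the $p_i$ are jointly monic, some $p_i(x)\ne p_i(y)$, and $[0,1]$ is a regular cogenerator of $\POSCH$ so some $\psi\in C(D^\op(i))$ separates them, whence $\psi\cdot p_i\in M$ does. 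The cone is initial for the same reason — joint monicity plus initiality of the $p_i$ and the fact that $[0,1]$ detects the order. By Theorem~\ref{d:thm:1}, $M$ is dense in $C(L)$ with respect to the sup-metric.

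The remaining, and main, step is to upgrade "dense" to "equal", i.e.\ to show $\varphi$ actually lies in $M$ rather than merely in its closure — this is where the cardinal bound $\aleph_1$ enters. By density, choose for each $n$ a function $g_n\in M$ with $\|g_n-\varphi\|_\infty<2^{-n}$; each $g_n$ factors through some $p_{i_n}$, and by $\aleph_1$-filteredness of $I$ the countable family $(i_n)_n$ has an upper bound $i_\infty$, so all $g_n$ factor through $p_{i_\infty}$ as $g_n=h_n\cdot p_{i_\infty}$ with $h_n\in C(D^\op(i_\infty))$. The sequence $(h_n)$ is Cauchy on the image $\im p_{i_\infty}$ (since $\|g_n-g_m\|\to0$ and $p_{i_\infty}$ is surjective onto its image), and by completeness of $[0,1]$ it converges uniformly on that compact set to a continuous monotone function; extending via Nachbin's injectivity (Theorem~\ref{d:thm:2}) along the embedding $\im p_{i_\infty}\hookrightarrow D^\op(i_\infty)$ gives $h\in C(D^\op(i_\infty))$ with $h\cdot p_{i_\infty}=\varphi$. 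Thus $\varphi$ factors through $p_{i_\infty}$, which gives surjectivity of the comparison map $\operatorname{colim}_i C(D(i))\to C(L)$; injectivity is the observation that if $h\cdot p_i=h'\cdot p_i$ then, since $p_i$ is surjective onto $\im p_i=\bigcap_{j\to i}\im D^\op(j\to i)$ and $I$ is (even finitely) filtered, $h$ and $h'$ already agree after composing with some transition $D^\op(j\to i)$, matching the criterion of Theorem~\ref{d:thm:3}. The delicate point throughout is handling the images and the Nachbin extension carefully enough that the Cauchy-completeness argument produces a genuine element at a single stage $i_\infty$; that is the obstacle I expect to spend the most care on.
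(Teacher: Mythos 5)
Your surjectivity half is essentially the paper's argument (which follows Gabriel--Ulmer): the directed union $M=\bigcup_i (Cp_i)(C(D(i)))$ is a point-separating initial subalgebra of $C(L)$, hence dense by Theorem~\ref{d:thm:1}; $\aleph_1$-codirectedness traps an approximating sequence of $\varphi$ in a single stage $i_\infty$, the corresponding $h_n$ converge uniformly on the closed subspace $\im p_{i_\infty}$, and Nachbin's Theorem~\ref{d:thm:2} extends the limit to all of $D(i_\infty)$. That part is correct, modulo a slip in your opening sentence: copresentability of $[0,1]$ is about sending $\aleph_1$-\emph{codirected limits} of $\POSCH$ to filtered colimits of sets, not ``$\aleph_1$-filtered colimits of $\POSCH$''; the argument you then actually run is about limits, so this is only a misstatement.

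The genuine gap is in the injectivity step. From $h\cdot p_i=h'\cdot p_i$ you get that $h$ and $h'$ agree on $\im p_i=\bigcap_{j\to i}\im D(j\to i)$, but it does \emph{not} follow that they agree on some single $\im D(j\to i)$: the equaliser $\{h=h'\}$ is closed rather than open, so the standard compactness argument for codirected intersections does not apply to it. Concretely, take the $\N$-indexed diagram with $D(n)=[0,1]$ and transition maps $x\mapsto x/2$; the limit is a single point, and $h=\mathrm{id}$, $h'=0$ agree on it but on no $\im D(m\to 0)=[0,2^{-m}]$. This also refutes your parenthetical claim that finite filteredness suffices --- that would make $[0,1]$ finitely copresentable, which it is not. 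The repair needs $\aleph_1$ a second time: for each $n\geq 1$ the compact set $\{x\in D(i)\mid |h(x)-h'(x)|\geq 1/n\}$ is disjoint from the codirected intersection $\bigcap_{j\to i}\im D(j\to i)$, hence by compactness disjoint from some $\im D(j_n\to i)$; an upper bound $j$ of the countably many $j_n$ then gives $h\cdot D(j\to i)=h'\cdot D(j\to i)$, which is the criterion of Theorem~\ref{d:thm:3}. (The paper short-circuits exactly this point by invoking the known $\aleph_1$-copresentability of $[0,1]$ in $\COMPHAUS$.)
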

\begin{proof}
  This can be shown with the same argument as in \cite[6.5.(c)]{GU71}. Firstly,
  by Theorem~\ref{d:thm:1}, $\hom(-,[0,1])$ sends every $\aleph_1$-codirected
  limit to a jointly surjective cocone. Secondly, using Theorem~\ref{d:thm:3},
  this cocone is a colimit since $[0,1]$ is $\aleph_1$-copresentable in
  $\COMPHAUS$.
\end{proof}

\begin{theorem}
  The functor $C\colon \POSCH^\op\to\catA$ corestricts to an equivalence between
  $\POSCH^\op$ and the full subcategory of $\catA$ defined by those objects $A$
  which are Cauchy complete and where the cone of all $\catA$-morphisms from $A$
  to $[0,1]$ is point-separating.
\end{theorem}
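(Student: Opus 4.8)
Since the preceding theorems establish that $C\colon\POSCH^\op\to\catA$ is fully faithful, it automatically corestricts to an equivalence onto its essential image, so the substance of the assertion is the identification of that image. The plan is therefore to prove two things: first, that every object of the form $CX$ is Cauchy complete and admits a point-separating cone of $\catA$-morphisms into $[0,1]$; second, that every object $A$ of $\catA$ with these two properties is isomorphic in $\catA$ to $CX$ for a suitable $X\in\POSCH$.

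For the first point, fix $X\in\POSCH$ and regard $CX$ as a subalgebra of the power $[0,1]^X$ (a product of copies of $[0,1]$, formed pointwise in $\catA$). Then $CX$ is closed in $[0,1]^X$ for the sup-metric topology, since a uniform limit of monotone continuous maps $X\to[0,1]$ is again monotone and continuous; as $[0,1]$ is Cauchy complete and Cauchy completeness is inherited by such closed subspaces of powers, $CX$ is Cauchy complete. For point-separation, observe that for each $x\in X$ the evaluation $\ev_x\colon CX\to[0,1]$, $\psi\mapsto\psi(x)$, is an $\catA$-morphism, since the algebra structure of $CX$ is computed pointwise; and already these maps separate the elements of $CX$, so the full cone of $\catA$-morphisms $CX\to[0,1]$ is point-separating as well.

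For the converse, let $A$ be Cauchy complete with point-separating cone of $\catA$-morphisms into $[0,1]$, and set $X=\hom_{\catA}(A,[0,1])$, endowed with the topology and the order inherited pointwise from $[0,1]^A$. The first task is to check that $X$ is a partially ordered compact space. Since every operation symbol of the theory of $\catA$ (the nullary $\bot,\top$, the unary $-\luk u$ and $-\pitchfork u$, and the binary $\vee$ and $\ocircle$) is interpreted on $[0,1]$ by a continuous map, the condition ``$\phi$ preserves the operations'' defines a closed subset of $[0,1]^A$, so $X$ is closed, hence compact; moreover the order on $X$ is the intersection over $a\in A$ of the closed subsets $\{(\phi,\psi)\mid\phi(a)\le\psi(a)\}$, hence a closed partial order (anti-symmetry coming from that of $\le$ on $[0,1]$). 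Now let $m\colon A\to CX$ send $a\in A$ to the evaluation $\ev_a\colon X\to[0,1]$, $\phi\mapsto\phi(a)$; this is well defined because each $\ev_a$ is monotone and continuous by the choice of structure on $X$, and it is an $\catA$-morphism because every $\phi\in X$ preserves the operations. By the point-separation hypothesis $m$ is injective, hence a monomorphism in $\catA$ (equivalently, its image is a subalgebra isomorphic to $A$).

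It remains to see that $m$ is an isomorphism, and here we invoke the Stone--Weierstra\ss-type result Theorem~\ref{d:thm:1}. The cone $(m(a)\colon X\to[0,1])_{a\in A}$ is point-separating because distinct elements of $X=\hom_{\catA}(A,[0,1])$ differ at some $a\in A$, and it is initial because the order on $X$ was defined to be the pointwise one; by Theorem~\ref{d:thm:1}, $m$ is therefore dense in $CX$, and since $A$ is Cauchy complete the same theorem gives that $m$ is an isomorphism, so $A\cong CX$ in $\catA$. Combined with the first point and the full faithfulness of $C$, this yields the claimed equivalence. I expect the only genuinely delicate step to be the verification that $X=\hom_{\catA}(A,[0,1])$ lies in $\POSCH$: this rests entirely on the fact that all operation symbols of the presentation of $\catA$ act continuously on $[0,1]$, so that the $\catA$-morphisms $A\to[0,1]$ are cut out of $[0,1]^A$ by closed conditions; the remaining verifications (that the relevant maps are $\catA$-morphisms and that the cone $(m(a))_{a\in A}$ is point-separating and initial) are routine.
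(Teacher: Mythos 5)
Your argument is correct and is essentially the intended one: the paper itself only cites \cite{HN16_tmp} for this theorem, but Theorem~\ref{d:thm:1} is stated precisely to serve as the key step, and you deploy it exactly as designed -- realising a candidate $A$ as a point-separating initial subobject of $C(\hom_{\catA}(A,[0,1]))$ and concluding by density plus Cauchy completeness. The only detail worth making explicit is that an injective $\catA$-morphism is automatically order-reflecting (from preservation of $\vee$ plus injectivity) and hence an isometric embedding of $[0,1]$-categories, which is what lets the Cauchy completeness of $A$ transfer to its image inside $CX$ so that dense plus closed gives surjectivity.
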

\begin{proof}
  See \cite{HN16_tmp}.
\end{proof}

Instead of working with Cauchy completeness, we wish to add an operation to the
algebraic theory of $\catA$ so that, if $M$ is closed in $CX$ under this
operation, then $M$ is closed with respect to the topology of the
$[0,1]$-category $CX$. In the case of $\COMPHAUS$, this is achieved in
\cite{Isb82} using the operation
\[
  [0,1]^\N \longrightarrow [0,1],\,
  (u_n)_{n\in\N}\longmapsto\sum_{n=0}^\infty\frac{1}{2^{n+1}}u_n
\]
on $[0,1]$; since the limit of a convergent sequence $(\varphi_n)_{n\in\N}$ can
be calculated as
\[
  \lim_{n\to\infty}\varphi_n=\varphi_0+(\varphi_1-\varphi_0)+\dots.
\]
However, this argument cannot be transported directly into the ordered setting
since the difference $\varphi_1-\varphi_0$ of two monotone maps
$\varphi_0,\varphi_1\colon X\to [0,1]$ is not necessarily monotone. To
circumvent this problem, we look for a monotone and continuous function
$[0,1]^\N\to[0,1]$ which calculates the limit of ``sufficiently many
sequences''. We make now the meaning of ``sufficiently many'' more precise.

\begin{lemma}
  Let $M\subseteq CX$ be a subalgebra in $\catA$ and $\psi\in CX$ with
  $\psi\in\overline{M}$. Then there exists a sequence $(\psi_n)_{n\in\N}$ in $M$
  converging to $\psi$ so that
  \begin{enumerate}
  \item $(\psi_n)_{n\in\N}$ is increasing, and
  \item for all $n\in\N$ and all $x\in X$:
    $\psi_{n+1}(x)-\psi_n(x)\le\frac{1}{2^n}$.
  \end{enumerate}
\end{lemma}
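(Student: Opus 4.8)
The plan is to construct the sequence $(\psi_n)_{n\in\N}$ by a greedy approximation, exploiting that $M$ is dense in $CX$ and closed under the operations of $\catA$ (in particular under binary joins $\vee$ and under truncated minus $v\ominus u$). First I would fix a sequence $(\chi_n)_{n\in\N}$ in $M$ with $\|\chi_n-\psi\|<\tfrac{1}{2^{n+3}}$, say, which exists because $\psi\in\overline{M}$. The naive candidate $\psi_n:=\chi_0\vee\dots\vee\chi_n$ is already increasing and lies in $M$, and it converges to $\psi$ since $|\psi_n(x)-\psi(x)|\le\max_{k\le n}|\chi_k(x)-\psi(x)|\to 0$; so conditions of "increasing" and "converging" come essentially for free. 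The real work is to also enforce the uniform bound $\psi_{n+1}(x)-\psi_n(x)\le\tfrac{1}{2^n}$ on the increments.

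To get the increment bound I would not take the plain join but rather truncate the jump. Having built an increasing $\psi_n\in M$ close to $\psi$, set
\[
  \psi_{n+1} \;:=\; \psi_n \;\vee\; \bigl(\chi_{n+1}\ominus \tfrac{1}{2^{n+1}}\bigr),
\]
or a variant of this form; here $\chi_{n+1}\ominus c$ denotes the pointwise truncated difference, which lies in $M$ by the remark preceding the lemma, and the join keeps us in $M$ and keeps the sequence increasing. The point of subtracting a small constant before joining is that, on the set where $\chi_{n+1}>\psi_n$, the new value exceeds $\psi_n$ by at most $\chi_{n+1}(x)-\psi_n(x)-\tfrac{1}{2^{n+1}}$; since $\psi_n$ and $\chi_{n+1}$ are both within roughly $\tfrac{1}{2^{n+1}}$ of $\psi$, this difference is at most about $\tfrac{1}{2^{n}}$, giving the desired bound after choosing the constants in the approximating sequence carefully. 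Simultaneously, because we only subtracted a null sequence of constants, $\psi_{n+1}$ stays within $O(2^{-n})$ of $\psi$, so convergence is preserved.

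The step I expect to be the main obstacle is juggling the two competing requirements at once: the truncation needed to control the increment $\psi_{n+1}-\psi_n$ from above must not be so aggressive that $\psi_n$ fails to converge up to $\psi$, and the join with $\psi_n$ that guarantees monotonicity must not reintroduce a large jump on the region where $\psi_n$ was already close to $\psi$ but $\chi_{n+1}$ overshoots. I would handle this by a careful bookkeeping of constants — e.g. demanding $\|\chi_n-\psi\|_\infty\le \varepsilon_n$ for a rapidly decreasing sequence $\varepsilon_n$ with $\sum\varepsilon_n<\infty$ and $\varepsilon_n+\varepsilon_{n+1}\le \tfrac{1}{2^n}$ — and then verifying the two estimates $0\le \psi_{n+1}(x)-\psi_n(x)\le \tfrac{1}{2^n}$ and $|\psi_{n+1}(x)-\psi(x)|\le \varepsilon_{n+1}$ by a short case distinction according to whether the join is attained by $\psi_n$ or by the truncated term. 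Since $\ominus$ and $\vee$ are genuine operations of $\catA$ (truncated minus being expressible via $\pitchfork$ as in the preceding remark) and $M$ is a subalgebra, every $\psi_n$ produced this way genuinely lies in $M$, which is what the lemma requires.
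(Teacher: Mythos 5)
Your proposal is correct and follows essentially the same route as the paper: both use closure of $M$ under truncated minus to push the approximants below $\psi$ and closure under $\vee$ to make the sequence increasing, and both exploit that an increasing sequence bounded above by $\psi$ has its increments controlled by the distance to $\psi$. The only difference is cosmetic: the paper obtains the bound $\psi_{n+1}-\psi_n\le\frac{1}{2^n}$ by passing to a subsequence at the end, whereas you calibrate the error constants up front in a recursive construction; your stated constants ($\|\chi_n-\psi\|<\frac{1}{2^{n+3}}$, truncation by $\frac{1}{2^{n+1}}$) do indeed work, since then $\psi-\psi_n\le\frac{5}{2^{n+3}}\le\frac{1}{2^n}$.
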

\begin{proof}
  We can find $(\psi_n)_{n\in\N}$ so that, for all $n\in\N$,
  $|\psi_n(x)-\psi(x)|\le\frac{1}{n+1}$. Then the sequence
  $(\psi_n\ominus\frac{1}{n+1})_{n\in\N}$ converges to $\psi$ too; moreover,
  since $M\subseteq CX$ is a subalgebra, also $\psi_n\ominus\frac{1}{n+1}\in M$,
  for all $n\in\N$. Therefore we can assume that we have a sequence
  $(\psi_n)_{n\in\N}$ in $M$ with $(\psi_n)_{n\in\N}\to\psi$ and
  $\psi_n\le\psi$, for all $n\in\N$. Then the sequence
  $(\psi_0\vee\dots\vee\psi_n)_{n\in\N}$ has all its members in $M$, is
  increasing and converges to $\psi$. Finally, there is a subsequence of this
  sequence which satisfies the second condition above.
\end{proof}

\begin{lemma}
  Every increasing sequence $(u_n)_{n\in\N}$ in $[0,1]$ satisfying
  $u_{n+1}-u_n\le\frac{1}{2^n}$, for all $n\in\N$, is Cauchy. Let
  \[
    \mathcal{C}=\{(u_n)_{n\in\N}\in[0,1]^\N\mid(u_n)_{n\in\N}\text{ is monotone
      and $u_{n+1}-u_n\le\frac{1}{2^n}$, for all $n\in\N$}\}.
  \]
  Then every sequence in $\mathcal{C}$ is Cauchy and
  $\lim\colon\mathcal{C}\to[0,1]$ is monotone and continuous.
\end{lemma}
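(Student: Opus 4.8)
The plan is to derive everything from one telescoping estimate. First I would prove the opening claim: if $(u_n)_{n\in\N}$ is an increasing sequence in $[0,1]$ with $u_{n+1}-u_n\le\frac{1}{2^n}$ for all $n$, then for $m\ge n$ one has
\[
  0\le u_m-u_n=\sum_{k=n}^{m-1}(u_{k+1}-u_k)\le\sum_{k=n}^{m-1}\frac{1}{2^k}\le\frac{1}{2^{n-1}},
\]
and since $\frac{1}{2^{n-1}}\to 0$ the sequence is Cauchy; as $[0,1]$ is complete it converges, and letting $m\to\infty$ in the estimate gives $0\le\bigl(\lim_m u_m\bigr)-u_n\le\frac{1}{2^{n-1}}$ for every $n$. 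Every sequence in $\mathcal{C}$ is in particular increasing with increments bounded by $\frac{1}{2^n}$, hence Cauchy, so that $\lim\colon\mathcal{C}\to[0,1]$ is well defined and satisfies the \emph{uniform} bound $\bigl|\bigl(\lim_m u_m\bigr)-u_n\bigr|\le\frac{1}{2^{n-1}}$ for all $(u_m)_{m\in\N}\in\mathcal{C}$ and all $n\in\N$.

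Next I would fix the ambient structure: $[0,1]^\N$ is a partially ordered compact space under the product topology and the pointwise order, and $\mathcal{C}$ carries the induced structure. I would observe that each of the conditions ``$u_n\le u_{n+1}$'' and ``$u_{n+1}-u_n\le\frac{1}{2^n}$'' defines a closed subset of $[0,1]^\N$ (each involves only two coordinates and a continuous comparison), so $\mathcal{C}$, being their intersection over $n\in\N$, is closed in $[0,1]^\N$ and is therefore itself a partially ordered compact space. Monotonicity of $\lim$ is then immediate from the monotonicity of limits of real sequences: if $(u_n)_{n\in\N}\le(v_n)_{n\in\N}$ pointwise in $\mathcal{C}$, then $\lim_n u_n\le\lim_n v_n$.

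For continuity, the idea is to realise $\lim$ as a uniform limit of the coordinate projections $\pi_n\colon\mathcal{C}\to[0,1]$, which are continuous. The uniform bound from the first step says exactly that $\sup\bigl\{\,\bigl|\bigl(\lim_m u_m\bigr)-\pi_n\bigl((u_m)_{m\in\N}\bigr)\bigr| : (u_m)_{m\in\N}\in\mathcal{C}\,\bigr\}\le\frac{1}{2^{n-1}}$, which tends to $0$; hence $\pi_n\to\lim$ uniformly on $\mathcal{C}$, and a uniform limit of continuous maps into the metric space $[0,1]$ is continuous. I expect this to be the only point that needs care: $\lim$ is far from continuous on all of $[0,1]^\N$, and the increment bound defining $\mathcal{C}$ is precisely what upgrades the pointwise convergence $\pi_n\to\lim$ to uniform convergence; the remaining verifications are routine.
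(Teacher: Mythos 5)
Your proof is correct; the paper states this lemma without proof, and your argument supplies exactly the intended justification. The key point --- that the telescoping bound $0\le u_m-u_n\le\frac{1}{2^{n-1}}$ is uniform over $\mathcal{C}$, so that $\lim$ is the uniform limit of the continuous coordinate projections and hence continuous (whereas it is badly discontinuous on all of $[0,1]^\N$) --- is precisely what the increment condition defining $\mathcal{C}$ is there for, and your reading of ``monotone'' as ``increasing'' is the one the surrounding construction ($\gamma\cdot\mu$ produces increasing sequences) requires.
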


Motivated by the two lemmas above, we are looking for a monotone continuous map
$[0,1]^\N\to[0,1]$ which sends every sequence in $\mathcal{C}$ to its
limit. Such a map can be obtained by combining $\lim\colon\mathcal{C}\to[0,1]$
with a monotone continuous retraction of the inclusion map
$\mathcal{C}\hookrightarrow [0,1]^\N$.

\begin{lemma}
  The map
  $\mu\colon [0,1]^\N\to[0,1]^\N,\,(u_n)_{n\in\N}\mapsto(u_0\vee\dots\vee
  u_n)_{n\in\N}$ is monotone and continuous.
\end{lemma}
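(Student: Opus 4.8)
The plan is to check the two claims separately, both of which reduce to componentwise observations about the product $[0,1]^{\N}$ in $\POSCH$. Recall that this product carries the pointwise order and the Tychonoff (product) topology, and that for every $n\in\N$ the projection $\pi_n\colon[0,1]^{\N}\to[0,1]$ is monotone and continuous; moreover, a map into a product is monotone (resp.\ continuous) precisely when its composite with each projection is.

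First I would treat monotonicity. If $(u_n)_{n\in\N}\le(v_n)_{n\in\N}$ in $[0,1]^{\N}$, that is $u_n\le v_n$ for all $n\in\N$, then for every $n\in\N$ one has $u_0\vee\dots\vee u_n\le v_0\vee\dots\vee v_n$, since $\vee$ is monotone in each argument on $[0,1]$. Hence $\pi_n\circ\mu$ is monotone for each $n$, and therefore so is $\mu$.

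For continuity I would observe that $\pi_n\circ\mu$ is the map $(u_k)_{k\in\N}\mapsto u_0\vee\dots\vee u_n$, which factors as $[0,1]^{\N}\xrightarrow{\langle\pi_0,\dots,\pi_n\rangle}[0,1]^{n+1}\to[0,1]$, where the first map is a tuple of projections, hence continuous, and the second is $(t_0,\dots,t_n)\mapsto t_0\vee\dots\vee t_n$. The latter is continuous because $[0,1]$ with the Euclidean topology is a topological lattice (binary $\vee$ on $[0,1]$ is continuous, and finite iterates of it precomposed with projections remain continuous). Thus $\pi_n\circ\mu$ is continuous for every $n\in\N$, so $\mu$ is continuous.

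There is no genuine obstacle here; the only point worth keeping in mind is that both monotonicity and continuity into the infinite product $[0,1]^{\N}$ are to be tested projection by projection, after which each component of $\mu$ depends on only finitely many coordinates and the verification is immediate.
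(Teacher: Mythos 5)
Your proof is correct; the paper in fact states this lemma without proof, and your componentwise verification (each $\pi_n\circ\mu$ depends on finitely many coordinates and is a finite iterate of the continuous, monotone binary $\vee$ on $[0,1]$) is exactly the routine argument the authors leave implicit. No issues.
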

Clearly, $\mu$ sends a sequence to an increasing sequence, and
$\mu((u_n)_{n\in\N})=(u_n)_{n\in\N}$ for every increasing sequence
$(u_n)_{n\in\N}$.

\begin{lemma}
  The map $\gamma\colon [0,1]^\N\to[0,1]^\N$ sending a sequence $(u_n)_{n\in\N}$
  to the sequence $(v_n)_{n\in\N}$ defined recursively by
  \begin{align*}
    v_0=u_0 &&\text{and}&& v_{n+1}=\min\left(u_{n+1},v_n+\frac{1}{2^n}\right)
  \end{align*}
  is monotone and continuous. Furthermore, $\gamma$ sends an increasing sequence
  to an increasing sequence.
\end{lemma}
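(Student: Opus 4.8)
The plan is to check the three assertions separately, each reducing to a short induction on the index $n$, the only topological input being the universal property of the product $[0,1]^\N$. For \emph{monotonicity}, suppose $(u_n)_{n\in\N}\le(u_n')_{n\in\N}$ coordinatewise and write $(v_n)_{n\in\N}=\gamma((u_n)_{n\in\N})$, $(v_n')_{n\in\N}=\gamma((u_n')_{n\in\N})$; I would prove $v_n\le v_n'$ for all $n$ by induction. The base case is $v_0=u_0\le u_0'=v_0'$, and the step uses only that $\min$ and $t\mapsto t+\tfrac{1}{2^n}$ are monotone, so that from $v_n\le v_n'$ and $u_{n+1}\le u_{n+1}'$ one gets $v_{n+1}=\min(u_{n+1},v_n+\tfrac{1}{2^n})\le\min(u_{n+1}',v_n'+\tfrac{1}{2^n})=v_{n+1}'$. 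For \emph{preservation of increasing sequences}, I would first record the auxiliary fact, valid for every input sequence and proved by the same trivial induction, that $v_n\le u_n$ for all $n$ (indeed $v_0=u_0$ and $v_{n+1}=\min(u_{n+1},v_n+\tfrac{1}{2^n})\le u_{n+1}$); then if $(u_n)_{n\in\N}$ is increasing we have $v_n\le u_n\le u_{n+1}$ and trivially $v_n\le v_n+\tfrac{1}{2^n}$, hence $v_n\le\min(u_{n+1},v_n+\tfrac{1}{2^n})=v_{n+1}$, so $(v_n)_{n\in\N}$ is increasing.

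For \emph{continuity}, recall that a map into the product $[0,1]^\N$ is continuous if and only if each of its components $[0,1]^\N\to[0,1]$ is continuous. By the recursive definition the $k$-th component, $(u_n)_{n\in\N}\mapsto v_k$, depends only on the finitely many coordinates $u_0,\dots,u_k$ and is obtained from them by finitely many additions of a constant and applications of $\min$; these operations compose to a continuous map $[0,1]^{k+1}\to[0,1]$, the one thing to notice being that $\min(a,b)\le a$ forces the value to remain in $[0,1]$ even though $v_n+\tfrac{1}{2^n}$ may exceed $1$. Precomposing with the projection $[0,1]^\N\to[0,1]^{k+1}$ then yields continuity of the $k$-th component, and hence of $\gamma$.

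I expect no real obstacle here: the inductions are routine, and the single point requiring a moment's care is precisely the range check in the continuity argument (that each partial computation is genuinely a map into $[0,1]$), which the inequality $\min(a,b)\le a$ settles. Let me also note that the same two observations give what is actually needed in the sequel, namely that $\gamma$ together with $\mu$ retracts $[0,1]^\N$ onto $\mathcal{C}$: the map $\mu$ is the identity on increasing sequences, and on a sequence in $\mathcal{C}$ an induction shows $v_n=u_n$ for all $n$, since $u_{n+1}\le u_n+\tfrac{1}{2^n}$ makes the minimum in the recursion equal to $u_{n+1}$.
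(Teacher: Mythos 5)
Your proof is correct and follows essentially the same route as the paper: the monotonicity and increasing-sequence claims are the same routine inductions, and your componentwise continuity check (each $v_k$ is a continuous function of the finitely many coordinates $u_0,\dots,u_k$, verified by induction) is exactly what the paper's appeal to the exponential law with $\N$ discrete amounts to. Your closing remark that $\gamma\cdot\mu$ retracts $[0,1]^\N$ onto $\mathcal{C}$ is also correct and is precisely the conclusion the paper draws immediately after the lemma.
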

\begin{proof}
  It is easy to see that $\gamma$ is monotone. In order to verify continuity, we
  consider $\N$ as a discrete topological space, this way $[0,1]^\N$ is an
  exponential in $\TOP$. We show that $\gamma$ corresponds via the exponential
  law to a (necessarily continuous) map
  $f\colon \N\to[0,1]^{\left([0,1]^\N\right)}$. The recursion data above
  translates to the conditions
  \begin{align*}
    f(0)=\pi_0 &&\text{and}&& f(n+1)((u_m)_{m\in\N})=\min\left(u_{n+1},f(n)((u_m)_{m\in\N})+\frac{1}{2^n}\right),
  \end{align*}
  that is, $f$ is defined by the recursion data
  $\pi_0\in[0,1]^{\left([0,1]^\N\right)}$ and
  \[
    [0,1]^{\left([0,1]^\N\right)} \longrightarrow
    [0,1]^{\left([0,1]^\N\right)},\,\varphi \longmapsto \min
    \left(\pi_{n+1},\varphi+\frac{1}{2^n}\right).
  \]
  Note that with $\varphi\colon [0,1]^\N\to[0,1]$ also
  $\min\left(\pi_{n+1},\varphi+\frac{1}{2^n}\right)\colon [0,1]^\N\to[0,1]$ is
  continuous. Finally, if $(u_n)_{n\in\N}$ is increasing, then so is
  $(v_n)_{n\in\N}$.
\end{proof}

We conclude that the map $\gamma\cdot\mu\colon [0,1]^\N\to\mathcal{C}$ is a
retraction for the inclusion map $\mathcal{C}\to[0,1]^\N$ in $\POSCH$. Therefore
we define now:
\begin{definition}
  Let $\overline{\catA}$ be the $\aleph_1$-ary quasivariety obtained by adding
  one $\aleph_1$-ary operation symbol to the theory of $\catA$ (see
  Remark~\ref{d:rem:1}). Then $[0,1]$ becomes an object of $\overline{\catA}$ by
  interpreting this operation symbol by
  \[
    \delta=\lim\cdot\gamma\cdot\mu\colon [0,1]^\N\to[0,1].
  \]
\end{definition}
The (accordingly modified) functor $C\colon \POSCH\to\overline{\catA}$ is fully
faithful (see Remark~\ref{d:rem:2}); moreover, by
Proposition~\ref{prop:unit-interval-copresentable}, $C$ sends
$\aleph_1$-codirected limits to $\aleph_1$-directed colimits in
$\overline{\catA}$.
\begin{definition}
  Let $\overline{\catA}_0$ be the subcategory of $\overline{\catA}$ defined by
  those objects $A$ where the cone of all morphisms from $A$ to $[0,1]$ is
  point-separating.
\end{definition}
Hence, $\overline{\catA}_0$ is a regular epireflective full subcategory of
$\overline{\catA}$ and therefore also a quasivariety. Moreover:
\begin{theorem}
  The embedding $C\colon \POSCH^\op\to\overline{\catA}$ corestricts to an
  equivalence $C\colon \POSCH^\op\to\overline{\catA}_0$. Hence,
  $\overline{\catA}_0$ is closed in $\overline{\catA}$ under $\aleph_1$-directed
  colimits and therefore also a $\aleph_1$-ary quasivariety (see \cite[Remark
  3.32]{AR94}).
\end{theorem}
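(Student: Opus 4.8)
The plan is to prove that $C\colon\POSCH^\op\to\overline{\catA}$ factors through $\overline{\catA}_0$ and is essentially surjective onto it; since $C$ is fully faithful by Remark~\ref{d:rem:2}, this yields the equivalence, after which the closure statement follows formally from Proposition~\ref{prop:unit-interval-copresentable} and the conclusion about quasivarieties from \cite[Remark~3.32]{AR94}. That $C$ lands in $\overline{\catA}_0$ is immediate: for $Y\in\POSCH$ the evaluations $\ev_y\colon CY\to[0,1]$ ($y\in Y$) are $\overline{\catA}$-morphisms, because every operation on $CY$ — in particular the new $\aleph_1$-ary one, interpreted pointwise by $\delta$ — is computed pointwise, and these maps visibly separate the points of $CY$.

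Now fix $A\in\overline{\catA}_0$. I would let $X$ be the set of all $\overline{\catA}$-morphisms $A\to[0,1]$, with the topology and order it inherits as a subset of the power $[0,1]^{|A|}$ (pointwise convergence, pointwise order). For each operation symbol $\omega$ and each tuple of arguments from $A$, the condition ``$\varphi$ preserves $\omega$ on that tuple'' is an equaliser of two continuous maps $[0,1]^{|A|}\to[0,1]$ — here one uses that $\delta\colon[0,1]^{\N}\to[0,1]$ is continuous — so $X$ is closed in $[0,1]^{|A|}$, and $\{(\varphi,\psi)\mid\varphi\le\psi\}=\bigcap_{a\in A}\{(\varphi,\psi)\mid\varphi(a)\le\psi(a)\}$ is closed in $X\times X$. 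Hence $X\in\POSCH$, and by construction the cone $\bigl(\ev_a\colon X\to[0,1]\bigr)_{a\in A}$, $\varphi\mapsto\varphi(a)$, is point-separating and initial (initiality being literally the definition of the order on $X$). The map $e\colon A\to CX$, $a\mapsto\ev_a$, is a $\overline{\catA}$-morphism (operations on $CX$ are pointwise and each $\varphi\in X$ preserves operations), and it is injective precisely because $A\in\overline{\catA}_0$. So $e$ exhibits $A$ as a subalgebra of $CX$ in $\catA$ whose defining cone on $X$ is point-separating and initial, and Theorem~\ref{d:thm:1} gives that $e(A)$ is dense in $CX$.

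It remains to promote ``dense'' to ``all of $CX$'', which is exactly the purpose of the extra operation $\delta$. Let $\psi\in\overline{e(A)}$. Applying the first of the lemmas above to the subalgebra $M=e(A)\subseteq CX$ produces an increasing sequence $(\psi_n)_{n\in\N}$ in $e(A)$ with $\psi_{n+1}(x)-\psi_n(x)\le\frac{1}{2^{n}}$ for all $n,x$ and with $\psi_n\to\psi$. For every $x\in X$ the sequence $(\psi_n(x))_{n\in\N}$ lies in $\mathcal C$, and a direct inspection of the recursions defining $\mu$ and $\gamma$ shows that both fix it, whence $\delta\bigl((\psi_n)_{n\in\N}\bigr)(x)=\lim_n\psi_n(x)=\psi(x)$. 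Since $e(A)$ is a subalgebra of $CX$ in $\overline{\catA}$, it is closed under $\delta$, so $\psi=\delta\bigl((\psi_n)_{n\in\N}\bigr)\in e(A)$. Therefore $e(A)=\overline{e(A)}=CX$; thus $e$ is a bijective morphism of $\overline{\catA}$ and hence an isomorphism, and $C\colon\POSCH^\op\to\overline{\catA}_0$ is an equivalence.

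For the closure claim, an $\aleph_1$-directed diagram in $\overline{\catA}_0$ corresponds, via this equivalence, to an $\aleph_1$-codirected diagram in $\POSCH$, which has a limit $L$ since $\POSCH$ is complete; by Proposition~\ref{prop:unit-interval-copresentable} the functor $C$ carries the limit cone to a colimit cone in $\overline{\catA}$, so the colimit of the original diagram in $\overline{\catA}$ is (isomorphic to) $CL$ and hence lies in $\overline{\catA}_0$. Thus $\overline{\catA}_0$ is closed in $\overline{\catA}$ under $\aleph_1$-directed colimits, and since it is already known to be a regular epireflective — in particular limit-closed — full subcategory of the $\aleph_1$-ary quasivariety $\overline{\catA}$, it is itself a $\aleph_1$-ary quasivariety by \cite[Remark~3.32]{AR94}. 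The only genuinely non-formal part of the argument is the essential surjectivity: the crux is recognising that the ``spectrum'' $X$ of $\overline{\catA}$-morphisms $A\to[0,1]$ is a partially ordered compact space, and that closure under the single operation $\delta$ converts the Stone--Weierstra\ss{} density of $e(A)$ into an equality; the remaining verifications are routine bookkeeping.
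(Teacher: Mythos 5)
Your proof is correct and follows exactly the route the paper intends: the paper states this theorem without a written proof, and your argument is the natural assembly of the ingredients it has just set up — full faithfulness via Remark~\ref{d:rem:2}, the spectrum of $\overline{\catA}$-morphisms to $[0,1]$ as a partially ordered compact space, density from the Stone--Weierstra\ss{} Theorem~\ref{d:thm:1}, the $\delta$-lemmas to upgrade density to equality, and Proposition~\ref{prop:unit-interval-copresentable} for closure under $\aleph_1$-directed colimits. No gaps.
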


\section{Vietoris coalgebras}
\label{sec:vietoris-coalgebras}

In this section we consider the Vietoris functor $V\colon \POSCH\to\POSCH$ and
the associated category $\CoAlg(V)$ of coalgebras and homomorphisms. We show
that $\CoAlg(V)$ as well as certain full subcategories are also $\aleph_1$-ary
quasivarieties.

Recall from \cite{Sch93} (see also \cite[Proposition~3.28]{HN16_tmp}) that, for
a partially ordered compact space $X$, the elements of $VX$ are the closed upper
subsets of $X$, the order on $VX$ is containment $\supseteq$, and the sets
\begin{align*}
  \{A\in VX\mid A\cap U\neq\varnothing\}\quad\text{($U\subseteq
  X$ open lower)}
  &&\text{and}
  && \{A\in VX\mid A\cap
     K=\varnothing\}\quad\text{($K\subseteq
     X$ closed lower)}
\end{align*}
generate the compact Hausdorff topology on $VX$. Furthermore, for
$f\colon X\to Y$ in $\POSCH$, the map $Vf\colon VX\to VY$ sends $A$ to the
up-closure $\upc f[A]$ of $f[A]$. A coalgebra $(X,\alpha)$ for $V$ consists of a
partially ordered compact space $X$ and a monotone continuous map
$\alpha \colon X\to VX$. For coalgebras $(X,\alpha)$ and $(Y,\beta)$, a
homomorphism of coalgebras $f \colon(X,\alpha)\to(Y,\beta)$ is a monotone
continuous map $f \colon X\to Y$ so that the diagram
\[
  \xymatrix{X\ar[r]^{f}\ar[d]_{\alpha} & Y\ar[d]^{\beta} \\ VX\ar[r]_{Vf} & VY}
\]
commutes. The coalgebras for the Vietoris functor and their homomorphisms form
the category $\CoAlg(V)$, and forgetting the coalgebra structure gives rise to
the canonical forgetful functor $\CoAlg(V)\to \POSCH$ that sends $(X,\alpha)$ to
$X$ and leaves the maps unchanged. For the general theory of coalgebras we refer
to \cite{Ada05}.

As it is well-known, $V$ is part of a monad $\mV=\vmonad$ on $\POSCH$; here
$e_X \colon X\to VX$ sends $x$ to $\upc x$ and $m_X \colon VVX\to VX$ is given
by $\mathcal{A}\mapsto\bigcup\mathcal{A}$. Clearly, a coalgebra structure
$X\to VX$ for $V$ can be also interpreted as an endomorphism $X\modto X$ in the
Kleisli category $\POSCH_\mV$. In the sequel we will use this perspective
together with the duality result for $\POSCH_\mV$ of \cite{HN16_tmp} to show
that also $\CoAlg(V)^\op$ is a $\aleph_0$-ary quasivariety.

Let $\overline{\catB}$ denote the category with the same objects as
$\overline{A}$ and morphisms those maps $\varphi \colon A\to A'$ that preserve
finite suprema and the action $-\luk u$, for all $u\in [0,1]$, and satisfy
\[
  \varphi(x\ocircle y)\le \varphi(x)\ocircle \varphi(y),
\]
for all $x,y\in A$. The functor $C\colon \POSCH^\op\to\overline{\catA}$ extends
to a fully faithful functor $C\colon \POSCH_\mV\to\overline{\catB}$ making the
diagram
\[
  \xymatrix{\POSCH_\mV^\op\ar[r]^-{C} & \overline{\catB}\\
    \POSCH^\op\ar[r]_-{C}\ar[u] & \overline{\catA}_0\ar[u]}
\]
commutative, where the vertical arrows denote the canonical inclusion functors.
Therefore the category $\CoAlg(V)$ is dually equivalent to the category with
objects all pairs $(A,a)$ consisting of an $\overline{\catA}_0$ object $A$ and a
$\overline{\catB}$-morphism $a\colon A\to A$, and a morphism between such pairs
$(A,a)$ and $(A',a')$ is an $\overline{\catA}_0$-morphism $A\to A'$ commuting in
the obvious sense with $a$ and $a'$.

\begin{theorem}
  The category $\CoAlg(V)$ of coalgebras and homomorphisms for the Vietoris
  functor $V\colon \POSCH\to\POSCH$ is dually equivalent to a $\aleph_1$-ary
  quasivariety.
\end{theorem}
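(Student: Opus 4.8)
The plan is to realise the dual of $\CoAlg(V)$ explicitly as a $\aleph_1$-ary quasivariety. By the discussion preceding the statement, $\CoAlg(V)$ is dually equivalent to the category $\catX_0$ whose objects are the pairs $(A,a)$ with $A$ an object of $\overline{\catA}_0$ and $a\colon A\to A$ a $\overline{\catB}$-morphism, and whose morphisms $(A,a)\to(A',a')$ are the $\overline{\catA}_0$-morphisms $f$ with $f\cdot a=a'\cdot f$. So it suffices to show that $\catX_0$ is a $\aleph_1$-ary quasivariety, and I would do this by first realising a slightly larger category as an honest $\aleph_1$-ary quasivariety and then carving out $\catX_0$ by the very argument that was used earlier to pass from $\overline{\catA}$ to $\overline{\catA}_0$.

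First I would consider the category $\catX$ with objects the pairs $(A,a)$ where $A$ is an object of $\overline{\catA}$ (dropping the point-separation requirement) and $a\colon A\to A$ is a $\overline{\catB}$-morphism, and with morphisms $(A,a)\to(A',a')$ the $\overline{\catA}$-morphisms $f$ such that $f\cdot a=a'\cdot f$. This $\catX$ is visibly a $\aleph_1$-ary quasivariety: one takes the $\aleph_1$-ary presentation of $\overline{\catA}$ (the presentation of $\catA$ recalled in Remark~\ref{d:rem:1}, enlarged by the $\aleph_1$-ary symbol $\delta$), adjoins a single unary operation symbol $a$, and adjoins the axioms stating that $a$ is a $\overline{\catB}$-endomorphism, namely
\[
  a\bot=\bot,\qquad a(x\vee y)=a(x)\vee a(y),\qquad a(x\luk u)=a(x)\luk u\quad(u\in[0,1]),
\]
together with the equation
\[
  a(x)\ocircle a(y)=a(x\ocircle y)\vee\bigl(a(x)\ocircle a(y)\bigr),
\]
which is the equational rendering of the inequality $a(x\ocircle y)\le a(x)\ocircle a(y)$. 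No axiom relating $a$ to $\top$, $\ocircle$, $\pitchfork$ or $\delta$ is imposed, since a $\overline{\catB}$-morphism is not required to preserve those. Every operation symbol has arity at most $\aleph_1$, so the class of models is a $\aleph_1$-ary quasivariety, and, unravelling the definitions, its underlying category is exactly $\catX$.

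Next I would observe that $\catX_0$ is precisely the full subcategory of $\catX$ on those $(A,a)$ with $A$ in $\overline{\catA}_0$, and that this full subcategory is closed in $\catX$ under products, subalgebras and $\aleph_1$-directed colimits. Indeed, in $\catX$ each of these constructions is carried out by performing the corresponding construction on the underlying $\overline{\catA}$-objects and then equipping the result with the endomorphism induced in the obvious way; for $\aleph_1$-directed colimits this uses that $\aleph_1$-directed colimits in the $\aleph_1$-ary quasivariety $\overline{\catA}$ are computed on underlying sets, which at the same time makes transparent that the induced endomorphism is again a $\overline{\catB}$-morphism (the relevant equations and the inequality $a(x\ocircle y)\le a(x)\ocircle a(y)$ being witnessed already at some stage of the diagram). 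Since $\overline{\catA}_0$ is a regular epireflective -- hence product- and subalgebra-closed -- full subcategory of $\overline{\catA}$ that is moreover closed under $\aleph_1$-directed colimits, it follows that $\catX_0$ is closed in $\catX$ under products, subalgebras and $\aleph_1$-directed colimits. By \cite[Remark~3.32]{AR94}, applied exactly as it was for $\overline{\catA}_0\subseteq\overline{\catA}$, $\catX_0$ is a $\aleph_1$-ary quasivariety, and hence $\CoAlg(V)$ is dually equivalent to a $\aleph_1$-ary quasivariety.

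The argument is mostly formal, assembling results already established in the paper; the only genuine input is the (already proved) closure of $\overline{\catA}_0$ in $\overline{\catA}$ under $\aleph_1$-directed colimits. The step requiring the most care is the verification that the forgetful functor $\catX\to\overline{\catA}$ creates $\aleph_1$-directed colimits with the induced map still a $\overline{\catB}$-morphism, together with the check that the displayed axioms capture exactly the notion of $\overline{\catB}$-endomorphism; both come down to the facts that $\aleph_1$-directed colimits in $\overline{\catA}$ are computed on underlying sets and that the order of an object of $\overline{\catA}$ is definable from $\vee$.
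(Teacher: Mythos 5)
Your proposal is correct and matches the paper's proof in essence: the paper simply augments the (already established) $\aleph_1$-ary quasivariety presentation of $\overline{\catA}_0$ by one unary operation symbol together with the equations expressing that it is a $\overline{\catB}$-morphism --- exactly the equations you write down, including the equational rendering of $a(x\ocircle y)\le a(x)\ocircle a(y)$ via $\vee$. The only difference is one of ordering: you adjoin the operation to $\overline{\catA}$ first and then carve out the point-separating objects via closure under products, subalgebras and $\aleph_1$-directed colimits, whereas the paper restricts to $\overline{\catA}_0$ first and then adjoins the operation; both orderings succeed for the same reasons.
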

\begin{proof}
  Just consider the algebraic theory of $\overline{\catA}_0$ augmented by one
  unary operation symbol and by those equations which express that the
  corresponding operation is a $\overline{\catB}$-morphism.
\end{proof}

In particular, $\CoAlg(V)$ is complete and the forgetful functor
$\CoAlg(V)\to\POSCH$ preserves $\aleph_1$-codirected limits. In fact, slightly
more is shown in \cite{HNN16_tmp}:
\begin{proposition}
  The forgetful functor $\CoAlg(V)\to\POSCH$ preserves codirected limits.
\end{proposition}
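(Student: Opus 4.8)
The plan is to obtain this from the fact that the Vietoris functor
$V\colon\POSCH\to\POSCH$ preserves codirected limits. Indeed, for any endofunctor
$F$ on a category $\catX$ the forgetful functor $\CoAlg(F)\to\catX$ creates every
limit preserved by $F$. To see this, let $D$ be a diagram of $F$-coalgebras with
$D(i)=(X_i,\alpha_i)$, let $(p_i\colon L\to X_i)_i$ be a limit of the underlying
$\catX$-diagram, and assume $F$ preserves it, so that $(Fp_i\colon FL\to FX_i)_i$
is again a limit cone. Since each transition map of $D$ is a coalgebra
homomorphism, the maps $\alpha_i\cdot p_i\colon L\to FX_i$ form a cone over $F$
applied to the underlying diagram, hence factor through a unique
$\alpha\colon L\to FL$ that turns every $p_i$ into a coalgebra homomorphism; joint
monicity of $(Fp_i)_i$ then shows that $(L,\alpha)$, together with $(p_i)_i$, is a
limit of $D$ in $\CoAlg(F)$. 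Applying this with $F=V$ and $D$ codirected, it
suffices to prove that $V$ preserves codirected limits.

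Since limits in $\POSCH$ are formed by equipping the limit of the underlying
diagram in $\COMPHAUS$ with the initial order (see \cite{Tho09}), a cone in
$\POSCH$ is a limit exactly when it is initial and the forgetful functor
$\POSCH\to\COMPHAUS$ sends it to a limit cone. So, fixing a codirected diagram $D$
in $\POSCH$ with limit cone $(p_i\colon L\to D(i))_i$, it is enough to show that
the cone $(Vp_i\colon VL\to VD(i))_i$ is initial and that its underlying cone in
$\COMPHAUS$ is a limit of $V\circ D$. Recall that the order on $VX$ is reverse
inclusion, that $Vf(A)=\upc f[A]$, that the order on $L$ is the initial one (so
$x\le y$ in $L$ iff $p_i(x)\le p_i(y)$ for all $i$), and that, by
Theorem~\ref{thm:codirected-limits-COMPHAUS} applied to the limit cone $(p_i)_i$
of the underlying diagram, $\im p_i=\bigcap_{j\to i}\im D(k\colon j\to i)$.

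The technical heart is the identity
\[
  \mathcal{A}=\bigcap_i p_i^{-1}\bigl(\upc p_i[\mathcal{A}]\bigr)
\]
for every $\mathcal{A}\in VL$. The inclusion ``$\subseteq$'' is immediate; for
``$\supseteq$'', if $x\in L$ satisfies $p_i(x)\in\upc p_i[\mathcal{A}]$ for all
$i$, then the closed subsets $\mathcal{A}\cap p_i^{-1}(\downc p_i(x))$ of the
compact space $\mathcal{A}$ are nonempty, and any finite family of them contains
$\mathcal{A}\cap p_{i_0}^{-1}(\downc p_{i_0}(x))$ for a common lower index $i_0$
(using monotonicity of the transition maps), so by compactness there is
$y\in\mathcal{A}$ with $p_i(y)\le p_i(x)$ for all $i$; thus $y\le x$ in $L$, and
$x\in\mathcal{A}$ because $\mathcal{A}$ is an upper set. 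From this identity it
follows at once that the cone $(Vp_i)_i$ is initial: if
$\upc p_i[\mathcal{A}]\supseteq\upc p_i[\mathcal{B}]$ for all $i$ then each
$x\in\mathcal{B}$ lies in every $p_i^{-1}(\upc p_i[\mathcal{A}])$, hence in
$\mathcal{A}$; and since $VL$ is partially ordered, an initial cone is
automatically mono (its components are jointly injective, by antisymmetry). It
remains to identify the images. Given $A\in VD(i)$ belonging to
$\bigcap_{j\to i}\im VD(k\colon j\to i)$, put $\mathcal{A}=p_i^{-1}(A)\in VL$; then
$\upc p_i[\mathcal{A}]\subseteq A$ trivially, and for $a\in A$ a second compactness
argument --- now in $D(i)$, using for a common lower index $i_0$ a witness
$B\in VD(i_0)$ with $\upc D(i_0\to i)[B]=A$ and a point $c\in B$ with
$D(i_0\to i)(c)\le a$ --- produces $b\in A\cap\downc a\cap\im p_i$, whence
$a\in\upc p_i[\mathcal{A}]$. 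Therefore $\upc p_i[\mathcal{A}]=A$, so
$\im Vp_i=\bigcap_{j\to i}\im VD(k\colon j\to i)$; together with monicity,
Theorem~\ref{thm:codirected-limits-COMPHAUS} yields that the underlying cone of
$(Vp_i)_i$ is a limit in $\COMPHAUS$, and combined with initiality, $(Vp_i)_i$ is
a limit cone in $\POSCH$. Hence $V$ preserves codirected limits, which finishes
the proof.

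I expect the main obstacle to be the identification of the images, i.e.\ showing
that every $A\in\bigcap_{j\to i}\im VD(k)$ is actually of the form
$\upc p_i[\mathcal{A}]$: since the sets $D(k)[B]$ need not be upper, one must
descend to a common lower index, exploit the Vietoris transition equation
$\upc D(k)[B]=A$ to produce --- below a prescribed $a\in A$ --- a point that
simultaneously lies in $A$ and in the relevant image $\im D(k)$, and then invoke
the finite-intersection property in $D(i)$ together with
$\im p_i=\bigcap_{j\to i}\im D(k\colon j\to i)$. The order bookkeeping --- tracking
the reversal between the order $\le$ on $VX$ and ordinary inclusion, and the
comparison between the order on $L$ and the orders on the $D(i)$ --- is routine
once these set-level facts are in place.
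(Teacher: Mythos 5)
Your proof is correct, and it is worth noting that the paper itself gives no argument for this proposition: it defers entirely to the companion report \cite{HNN16_tmp}, while the duality developed in the preceding section only yields the weaker statement that $\CoAlg(V)\to\POSCH$ preserves $\aleph_1$-codirected limits. Your direct route --- reduce to showing that $V$ preserves codirected limits, since the forgetful functor $\CoAlg(F)\to\catX$ creates any limit preserved by $F$, and then verify Bourbaki's criterion (Theorem~\ref{thm:codirected-limits-COMPHAUS}) for the cone $(Vp_i)$ together with initiality --- is the natural self-contained argument and is essentially what the cited report carries out. The two places where such a proof can genuinely fail are handled properly: the identity $\mathcal{A}=\bigcap_i p_i^{-1}\bigl(\upc p_i[\mathcal{A}]\bigr)$ really does require the common-lower-index plus compactness argument, because the sets $\downc p_i(x)$ vary with $i$; and the image computation requires a second compactness argument inside $D(i)$, precisely because $p_i[\mathcal{A}]$ and $D(k)[B]$ need not be upper sets, so one must produce a point of $A\cap\downc a$ lying in $\im p_i=\bigcap_{j\to i}\im D(k)$ rather than in $p_i[\mathcal{A}]$ directly. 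You implicitly use that $\downc p_i(x)$ and $\im D(k)$ are closed (the former because the order relation is closed, the latter because continuous images of compact spaces in Hausdorff spaces are closed); these are standard for partially ordered compact spaces but deserve a word. With those remarks the argument is complete.
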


We finish this section by pointing out some further consequences of our approach
and consider certain full subcategories of $\CoAlg(V)$. For instance, still
thinking of a coalgebra structure $\alpha \colon X\to VX$ as an endomorphism
$\alpha \colon X\modto X$ in $\POSCH_\mV$, we say that $\alpha$ is
\df{reflexive} whenever $1_X\le\alpha$ in $\POSCH_\mV$, and $\alpha$ is called
\df{transitive} whenever $\alpha\circ\alpha\le\alpha$ in $\POSCH_\mV$. Passing
now to the corresponding $\overline{B}$-morphism $a \colon A\to A$, these
inequalities can be expressed as equations in $A$, and we conclude:

\begin{proposition}
  The full subcategory of $\CoAlg(V)$ defined by all reflexive (transitive,
  reflexive and transitive) coalgebras is dually equivalent to an $\aleph_1$-ary
  quasivariety. Moreover, this subcategory is coreflective in $\CoAlg(V)$ and
  closed under $\aleph_1$-ary limits.
\end{proposition}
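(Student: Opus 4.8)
The plan is to mirror, at the level of coalgebras, the passage from $\CoAlg(V)$ to a quasivariety that was carried out in the previous theorem, but now restricting to the subclasses cut out by the reflexivity and transitivity inequalities. Recall that $\CoAlg(V)$ is dually equivalent to the category of pairs $(A,a)$ with $A$ an object of $\overline{\catA}_0$ and $a\colon A\to A$ a $\overline{\catB}$-morphism; under this duality a coalgebra $\alpha\colon X\modto X$ in $\POSCH_\mV$ corresponds to $a=C\alpha\colon CX\to CX$. The inequality $1_X\le\alpha$ in the Kleisli hom-order dualises to the inequality $a\le 1_{CX}$ in $\overline{\catA}_0$ (the functor $C$ is fully faithful and, being the dual of a hom-functor into the cogenerator $[0,1]$, it reverses the Kleisli order on hom-sets), i.e.\ to $a(x)\le x$ for all $x$, which is the equation $x=x\vee a(x)$. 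Likewise $\alpha\circ\alpha\le\alpha$ dualises to $a\le a\circ a$, i.e.\ $a(x)\le a(a(x))$, the equation $a(a(x))=a(x)\vee a(a(x))$. So the first step is to spell out these two translations and record that "reflexive", "transitive", "reflexive and transitive" each correspond to adjoining one or both of these equations in the variable $x$ to the theory of the previous theorem.

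With that in hand, the dual-equivalence statement is immediate: the full subcategory of $\CoAlg(V)$ in question is dually equivalent to the full subcategory of the pair-category determined by the extra equations, and since $\overline{\catA}_0$ is an $\aleph_1$-ary quasivariety and we have only added one unary operation symbol together with the $\overline{\catB}$-morphism axioms and finitely many further equations (all of bounded arity), the result is again an $\aleph_1$-ary quasivariety. The second step is the coreflectivity claim. Here I would argue that the inclusion of, say, reflexive coalgebras into $\CoAlg(V)$ has a right adjoint: given $\alpha\colon X\modto X$, form $\alpha'=\alpha\vee 1_X$ in $\POSCH_\mV(X,X)$; this is again monotone continuous because the Kleisli hom-objects carry a $\POSCH$-structure under which binary join is continuous (equivalently, on the algebra side, $a\wedge 1$ is computed pointwise and is again a $\overline{\catB}$-morphism since both $a$ and the identity are, and meets with a fixed element preserve the relevant structure — here one uses that $\overline{\catB}$-morphisms are exactly those maps below the identity-like closure, and that the lattice operations involved are continuous). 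One checks $\alpha'$ is reflexive, that any coalgebra homomorphism out of a reflexive coalgebra into $(X,\alpha)$ factors uniquely through $(X,\alpha')$, and that this is functorial; the transitive and the reflexive-transitive cases are handled by the analogous closure operations (transitive closure $\bigvee_n \alpha^{\circ n}$, which exists and is continuous by an $\aleph_1$-ary-colimit / Cauchy-completeness argument in the Kleisli category, dually a decreasing-sequence limit among $\overline{\catB}$-endomorphisms). Closure under $\aleph_1$-ary limits then follows because limits of coalgebras are computed on underlying objects and on structure maps levelwise, and the defining inequalities, being expressible by equations, are preserved under products and equalisers — equivalently, a reflective/coreflective-by-equations subcategory of a category closed under such limits is again closed under them.

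I expect the main obstacle to be the coreflectivity part, specifically the transitive and reflexive-transitive cases: one must show the transitive closure of a Vietoris coalgebra structure again lands in $\POSCH_\mV$, i.e.\ that the countable supremum $\bigvee_{n\ge 1}\alpha^{\circ n}$ (respectively $\bigvee_{n\ge 0}\alpha^{\circ n}$ with $\alpha^{\circ 0}=1_X$) is still monotone and continuous. On the algebra side this is a claim about a decreasing sequence of $\overline{\catB}$-endomorphisms $a^{\circ n}$ of $A=CX$ converging (in the $[0,1]$-category topology) to a $\overline{\catB}$-endomorphism; convergence is where the extra operation $\delta$ and the Cauchy-completeness of objects of $\overline{\catA}_0$ enter, exactly as in Section~\ref{sec:quas-posch-concr}, while the fact that the limit is still a $\overline{\catB}$-morphism (preserves finite suprema, the $-\luk u$, and is sub-multiplicative for $\ocircle$) follows because each of these conditions is closed under pointwise limits of the relevant kind. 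Once that continuity/closure point is settled, verifying the universal property of the coreflection and checking preservation of $\aleph_1$-ary limits are routine.
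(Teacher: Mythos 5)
Your first step --- rewriting $1_X\le\alpha$ and $\alpha\circ\alpha\le\alpha$ as equations in the added unary operation (using the abbreviation $x\le y$ for $y=x\vee y$) and concluding that each of the three subcategories is dual to the $\aleph_1$-ary quasivariety obtained by adjoining these equations to the theory of the preceding theorem --- is exactly the paper's argument, and is fine. (Whether the order is preserved or reversed under $C$ is a matter of convention and does not affect this part.)

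The second half contains genuine errors. For coreflectivity the paper does not construct anything: it applies \cite[Theorem~1.66]{AR94} on the algebra side, where the extra equations cut out a full subcategory of a locally $\aleph_1$-presentable category that is closed under limits and $\aleph_1$-directed colimits, hence reflective; dualising gives the coreflection and the closure under $\aleph_1$-ary limits simultaneously. Your explicit ``closure'' constructions do not yield a coreflection: if you replace $\alpha$ by $\alpha\vee 1_X$ (or by $\bigvee_n\alpha^{\circ n}$) on the \emph{same} carrier $X$, the identity of $X$ is not a coalgebra homomorphism between $(X,\alpha\vee 1_X)$ and $(X,\alpha)$ in either direction, because homomorphisms of coalgebras require the square $\beta\cdot f=Vf\cdot\alpha$ to commute strictly, not laxly; so this modified structure map is neither the counit of a coreflection nor the unit of a reflection. (When such a coreflection exists it is in general carried by a different object, e.g.\ a largest subcoalgebra, not by $X$ with an enlarged structure map.) Your argument for closure under $\aleph_1$-ary limits is also based on a false premise: limits in $\CoAlg(V)$ are \emph{not} computed on underlying objects and structure maps levelwise --- that is the situation for colimits. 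Indeed, the paper needs a separate proposition (relying on \cite{HNN16_tmp}) merely to show that the forgetful functor $\CoAlg(V)\to\POSCH$ preserves codirected limits. Both claims should instead be transported along the dual equivalence from the corresponding (easy) closure properties of the equationally defined subcategory of the quasivariety.
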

\begin{proof}
  This follows from the discussion above and from \cite[Theorem~1.66]{AR94}.
\end{proof}

Another way of specifying full subcategories of $\CoAlg(V)$ uses coequations
(see \cite[Definition~4.18]{Ada05}). More generally, for a class $\mathcal{M}$
of monomorphisms in $\CoAlg(V)$, a coalgebra $X$ for $V$ is called
\df{coorthogonal} whenever, for all $m \colon A\to B$ in $\mathcal{M}$ and all
homomorphisms $f \colon X\to B$ there exists a (necessarily unique) homomorphism
$g \colon X\to A$ with $m\cdot g=f$ (see \cite[Definition~1.32]{AR94} for the
dual notion). We write $\mathcal{M}^\top$ for the full subcategory of
$\CoAlg(V)$ defined by those coalgebras which are coorthogonal to
$\mathcal{M}$. From the dual of \cite[Theorem~1.39]{AR94} we obtain:

\begin{proposition}
  For every set $\mathcal{M}$ of monomorphisms in $\CoAlg(V)$, the inclusion
  functor $\mathcal{M}^\top\hookrightarrow\CoAlg(V)$ has a right
  adjoint. Moreover, if $\lambda$ denotes a regular cardinal larger or equal to
  $\aleph_1$ so that, for every arrow $m\in\mathcal{M}$, the domain and codomain
  of $m$ is $\lambda$-copresentable, then
  $\mathcal{M}^\top\hookrightarrow\CoAlg(V)$ is closed under
  $\lambda$-codirected limits.
\end{proposition}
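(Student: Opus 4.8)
The plan is to reduce the statement to the locally presentable machinery of \cite{AR94}, applied to the category $\CoAlg(V)$ rather than to $\POSCH$ directly. The first step is to observe that $\CoAlg(V)$ is locally presentable: by the previous theorem it is dually equivalent to an $\aleph_1$-ary quasivariety, hence $\CoAlg(V)^\op$ is locally $\aleph_1$-presentable, and therefore $\CoAlg(V)$ is locally $\aleph_1$-copresentable (in other words, $\CoAlg(V)^\op$ is locally $\aleph_1$-presentable in the sense of \cite{AR94}). In such a category every object is $\lambda$-copresentable for some regular $\lambda$, and there is a regular cardinal bound governing orthogonality arguments. Since $\mathcal{M}$ is a \emph{set}, we may pick a single regular cardinal $\lambda\geq\aleph_1$ such that every domain and codomain of an arrow in $\mathcal{M}$ is $\lambda$-copresentable; this is the $\lambda$ referred to in the statement.

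Next I would transport the two assertions across the duality $\CoAlg(V)\simeq\mathcal{C}^\op$, where $\mathcal{C}$ is the quasivariety from the previous theorem. Under this equivalence, the set $\mathcal{M}$ of monomorphisms of $\CoAlg(V)$ corresponds to a set $\mathcal{E}$ of epimorphisms of $\mathcal{C}$, and a coalgebra is coorthogonal to $\mathcal{M}$ precisely when the corresponding object of $\mathcal{C}$ is orthogonal to $\mathcal{E}$ in the sense of \cite[Definition~1.32]{AR94}. Thus $\mathcal{M}^\top$ corresponds to the orthogonal subcategory $\mathcal{E}^\perp\subseteq\mathcal{C}$, and the inclusion $\mathcal{M}^\top\hookrightarrow\CoAlg(V)$ having a right adjoint is exactly the statement that $\mathcal{E}^\perp\hookrightarrow\mathcal{C}$ has a left adjoint (i.e.\ is reflective). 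Since $\mathcal{C}$ is locally presentable and $\mathcal{E}$ is a set, \cite[Theorem~1.39]{AR94} guarantees that $\mathcal{E}^\perp$ is reflective in $\mathcal{C}$; dualising gives the first claim.

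For the closure statement, I would again dualise: closure of $\mathcal{M}^\top$ under $\lambda$-codirected limits in $\CoAlg(V)$ is the same as closure of $\mathcal{E}^\perp$ under $\lambda$-directed colimits in $\mathcal{C}$. The relevant input is the colimit-preservation clause of \cite[Theorem~1.39]{AR94} (together with \cite[Remark~1.40]{AR94}): if every domain and codomain of an arrow in $\mathcal{E}$ is $\lambda$-presentable in $\mathcal{C}$, then $\mathcal{E}^\perp$ is closed under $\lambda$-directed colimits. So the task is to match the copresentability hypothesis in $\CoAlg(V)$ with $\lambda$-presentability in $\mathcal{C}$: an object of $\CoAlg(V)$ is $\lambda$-copresentable exactly when the corresponding object of $\mathcal{C}$ is $\lambda$-presentable, since the duality is an equivalence and $\lambda$-copresentability is defined by the dual universal property. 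Given the hypothesis on the arrows of $\mathcal{M}$, this hands us precisely what \cite[Theorem~1.39]{AR94} needs.

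The only point requiring genuine care -- and the main obstacle -- is the very first step, namely verifying that $\CoAlg(V)^\op$ is \emph{locally} $\aleph_1$-presentable (not merely accessible), so that the full strength of \cite[Theorem~1.39]{AR94} is available. This follows because $\CoAlg(V)^\op$ was identified with an $\aleph_1$-ary quasivariety in the preceding theorem, and every $\lambda$-ary quasivariety is locally $\lambda$-presentable by \cite[Theorem~3.24, Remark~3.26]{AR94}; in particular it is complete and cocomplete, which is what makes the orthogonality reflection argument go through. With this in hand, the remainder is a mechanical transport of \cite[Theorem~1.39]{AR94} across the established dual equivalence.
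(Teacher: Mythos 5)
Your proposal is correct and follows essentially the same route as the paper, which simply cites the dual of \cite[Theorem~1.39]{AR94} applied to the locally $\aleph_1$-(co)presentable category $\CoAlg(V)$ obtained from the preceding quasivariety theorem. You merely spell out the transport of coorthogonality to orthogonality across the dual equivalence, which is exactly the intended argument.
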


\begin{corollary}
  For every set of coequations in $\CoAlg(V)$, the full subcategory of
  $\CoAlg(V)$ defined by these coequations is coreflective.
\end{corollary}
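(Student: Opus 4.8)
The plan is to exhibit every set of coequations as a special instance of the preceding proposition, so that the corollary follows by simply invoking it. First I would recall the definition of a coequation (in the sense of \cite[Definition~4.18]{Ada05}): a coequation is carried by a subobject of a cofree coalgebra, equivalently by a monomorphism $m\colon A\to B$ in $\CoAlg(V)$ in which $B$ is cofree on a $\POSCH$-object. Given a set $\mathcal{E}$ of coequations, collect the associated monomorphisms into a set $\mathcal{M}$ of monomorphisms of $\CoAlg(V)$; by definition the full subcategory of $\CoAlg(V)$ cut out by the coequations in $\mathcal{E}$ is precisely $\mathcal{M}^\top$. The previous proposition then immediately gives that the inclusion $\mathcal{M}^\top\hookrightarrow\CoAlg(V)$ has a right adjoint, i.e.\ that this subcategory is coreflective, which is exactly the assertion of the corollary.

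The one point that needs a word of care is that a single coequation need not be carried by a monomorphism whose domain and codomain are $\lambda$-copresentable for a fixed $\lambda$; but this is irrelevant here, since the corollary only asserts coreflectivity and not closure under $\lambda$-codirected limits. Thus only the first half of the proposition is used, and no cardinality bound on the objects appearing in $\mathcal{M}$ is required. I would also remark that a set of coequations, rather than a proper class, is assumed precisely so that $\mathcal{M}$ is a set and the proposition applies.

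The main (and essentially only) obstacle is bookkeeping: one must make sure the translation between the coalgebraic notion of ``satisfying a set of coequations'' and the categorical notion of being coorthogonal to the corresponding set $\mathcal{M}$ of monomorphisms is literally a definitional identification, so that no extra hypotheses creep in. Once that is checked, the corollary is a one-line consequence of the proposition.
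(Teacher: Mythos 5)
Your proposal is correct and follows exactly the route the paper intends: the paper gives no separate proof, treating the corollary as the immediate specialisation of the preceding proposition to the set $\mathcal{M}$ of monomorphisms (into cofree coalgebras) carrying the given coequations, with satisfaction of a coequation being definitionally the coorthogonality condition. Your remark that only the adjoint-existence half of the proposition is needed, so no copresentability bound on the domains and codomains is required, is also accurate.
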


\section{$\aleph_1$-copresentable spaces}
\label{sec:aleph_1-copr-spac}

It is shown in \cite{GU71} that the the $\aleph_1$-copresentable objects in
$\COMPHAUS$ are precisely the metrisable compact Hausdorff spaces. We end this
paper with a characterisation of the $\aleph_1$-copresentable objects in
$\POSCH$ which resembles the one for compact Hausdorff spaces; to do so, we
consider generalised metric spaces in the sense of Lawvere \cite{Law73}.

More precisely, we think of metric spaces as categories enriched in the quantale
$[0,1]$, ordered by the ``greater or equal'' relation $\geqslant$, with tensor
product $\oplus$ given by truncated addition:
\[
  u\oplus v=\min(1,u+v),
\]
for all $u,v\in [0,1]$. We note that the right adjoint $\hom(u,-)$ of
$u\oplus-\colon [0,1]\to [0,1]$ is defined by
\[
  \hom(u,v)=v\ominus u=\max(0,v-u),
\]
for all $u,v\in [0,1]$.
\begin{remark}
  Via the isomorhism $[0,1]\to [0,1],\,u\mapsto 1-u$, the quantale described
  above is isomorphic to the quantale $[0,1]$ equipped with the \L{}ukasiewicz
  tensor used in Section~\ref{sec:quas-posch-concr}. However, we decided to
  switch so that categories enriched in $[0,1]$ look more like metric spaces.
\end{remark}

\begin{definition}
  A \df{metric space} is a pair $(X,a)$ consisting of a set $X$ and a map
  $a\colon X\times X\to [0,1]$ satisfying
  \begin{align*}
    0\geqslant a(x,x) &&\text{and}&& a(x,y)\oplus a(y,z)\geqslant a(x,z),
  \end{align*}
  for all $x,y,z\in X$. A map $f\colon X\to Y$ between metric spaces $(X,a)$,
  $(Y,b)$ is called \df{non-expansive} whenever
  \[
    a(x,x')\geqslant b(f(x),f(x')),
  \]
  for all $x,x'\in X$. Metric spaces and non-expansive maps form the category
  $\MET$.
\end{definition}

\begin{example}\label{d:ex:1}
  The unit interval $[0,1]$ is a metric space with metric
  $\hom(u,v)=v\ominus u$.
\end{example}

Our definition of metric space is not the classical one. Firstly, we consider
only metrics bounded by $1$; however, since we are interested in the induced
topology and the induced order, ``large'' distances are irrelevant. Secondly, we
allow distance zero for different points, which, besides topology, also allows
us to treat order theory. Every metric $a$ on a set $X$ defines the order
relation
\[
  x\le y \hspace{1ex}\text{whenever}\hspace{1ex} 0\geqslant a(x,y),
\]
for all $x,y\in X$; this construction defines a functor
\[
  O\colon \MET \longrightarrow\ORD
\]
commuting with the canonical forgetful functors to $\SET$.

\begin{lemma}
  The functor $ O\colon \MET\to\ORD$ preserves limits and initial cones.
\end{lemma}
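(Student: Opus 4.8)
The plan is to exploit that $O$ commutes with the forgetful functors $U\colon\MET\to\SET$ and $W\colon\ORD\to\SET$, both of which are topological, so that everything reduces to an explicit description of initial structures plus a one-line fact about suprema in $[0,1]$. Recall that for a source $(f_i\colon X\to(X_i,a_i))_{i\in I}$ in $\MET$ the $U$-initial metric on $X$ is $a(x,y)=\sup_{i\in I}a_i(f_i x,f_i y)$ (this is a metric and is the smallest one making every $f_i$ non-expansive), and that a cone in $\MET$ is initial exactly when its metric is of this form; correspondingly, a source $(g_i\colon Y\to Y_i)_{i\in I}$ in $\ORD$ is initial precisely when $y\le y' \iff \forall i\,.\,g_i(y)\le g_i(y')$. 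Since $U$ and $W$ are topological, limits in $\MET$ and in $\ORD$ are computed by taking the limit of the underlying diagram in $\SET$ and equipping it with the initial structure for the limit cone. So the heart of the matter is that $O$ preserves initial cones; preservation of limits will then follow formally.

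For the key step, let $(f_i\colon(X,a)\to(X_i,a_i))_{i\in I}$ be a $U$-initial source, i.e.\ $a(x,y)=\sup_{i}a_i(f_i x,f_i y)$. Then, for $x,y\in X$,
\[
  x\le y \text{ in } OX \iff a(x,y)=0 \iff a_i(f_i x,f_i y)=0 \text{ for all } i\in I \iff f_i(x)\le f_i(y) \text{ in } OX_i \text{ for all } i\in I,
\]
where the middle equivalence is simply the observation that a supremum of elements of $[0,1]$ equals $0$ if and only if every one of them does. By the description of initial sources in $\ORD$ recalled above, this says exactly that $(Of_i\colon OX\to OX_i)_{i\in I}$ is initial. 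Hence $O$ preserves initial cones.

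To deduce preservation of limits, take a limit cone $\mathcal{C}=(p_i\colon L\to D(i))_{i\in I}$ in $\MET$ for a diagram $D\colon I\to\MET$. As $U$ is topological, $U\mathcal{C}$ is a limit cone in $\SET$ and $\mathcal{C}$ is $U$-initial; applying the previous step, $O\mathcal{C}$ is $W$-initial, and $WO\mathcal{C}=U\mathcal{C}$ is still a limit cone in $\SET$. Since $W$ is topological, a $W$-initial cone lying over a limit cone in $\SET$ is itself a limit cone, so $O\mathcal{C}$ is a limit of $OD$ in $\ORD$. The only points requiring a little care — and thus the (mild) main obstacle — are the verification that $a(x,y)=\sup_i a_i(f_i x,f_i y)$ genuinely is the $U$-initial metric, in particular that it satisfies the triangle inequality for the truncated-addition tensor $\oplus$, and the correct treatment of the degenerate empty source, i.e.\ of the terminal object; once these standard facts are in place, the argument above is purely formal.
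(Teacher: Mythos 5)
Your proof is correct; the paper states this lemma without proof, and your argument is exactly the routine verification the authors leave implicit. The one substantive point --- that a supremum in $[0,1]$ vanishes if and only if every term does, applied to the initial metric $a(x,y)=\sup_{i}a_i(f_ix,f_iy)$ --- is handled correctly, and the reduction of limit preservation to preservation of initial cones via the topological forgetful functors $\MET\to\SET$ and $\ORD\to\SET$ is sound, including the degenerate empty source, where both the initial metric and the initial order are the indiscrete ones.
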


A metric space is called \df{separated} whenever the underlying order is
anti-symmetric. Element-wise, a metric space $(X,a)$ is separated whenever
\[
  (0\geqslant a(x,y) \hspace{1ex}\&\hspace{1ex} 0\geqslant a(y,z)) \implies x=y,
\]
for all $x,y\in X$.

Thirdly, we are not insisting on symmetry. However, every metric space $(X,a)$
can be symmetrised by
\[
  a_s(x,y)=\max(a(x,y),a(y,x)).
\]
For every metric space $(X,a)$, we consider the topology induced by the
symmetrisation $a_s$ of $a$. This construction defines the faithful functor
\[
  T\colon \MET \longrightarrow\TOP.
\]
We note that $(X,a)$ is separated if and only if the underlying topology is
Hausdorff. Furthermore, we recall:

\begin{lemma}\label{d:lem:1}
  The functor $T\colon \MET\to\TOP$ preserves finite limits. In particular, $T$
  sends subspace embeddings to subspace embeddings.
\end{lemma}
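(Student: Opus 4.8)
The plan is to use that finite limits in any category are generated by the terminal object, binary products, and equalizers, so that it suffices to check $T$ preserves each of these three. In all three cases the forgetful functors $\MET\to\SET$ and $\TOP\to\SET$ are topological and commute with $T$, so the underlying set of the image of a limit under $T$ is automatically the right one; what I would then have to verify is only that the symmetrised metric of the limit computed in $\MET$ induces exactly the limit topology in $\TOP$. Concretely, for each of the three shapes I would write down the limit in $\MET$ as an initial lift, symmetrise it, and invoke the corresponding classical fact about pseudometric spaces.

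For the terminal object there is nothing to prove: the terminal object of $\MET$ is the one-point space, whose image under $T$ is the one-point topological space, which is terminal in $\TOP$. For binary products, the product of $(X,a)$ and $(Y,b)$ in $\MET$ has underlying set $X\times Y$ and metric $c\bigl((x,y),(x',y')\bigr)=\max\bigl(a(x,x'),b(y,y')\bigr)$, and one checks immediately that $c_s=\max(a_s,b_s)$; the claim then reduces to the standard fact that the topology of the sup-pseudometric on a \emph{finite} product of pseudometric spaces is the product topology. This is exactly the point where finiteness matters: for an infinite product the sup-pseudometric yields the box (uniform) topology rather than the product topology, which is why one should not expect $T$ to preserve infinite products.

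For equalizers, the equalizer of $f,g\colon(X,a)\to(Y,b)$ in $\MET$ is the subset $E=\{x\in X\mid f(x)=g(x)\}$ carrying the restriction of $a$, whose symmetrisation is the restriction of $a_s$; hence $TE$ carries the topology of the restricted pseudometric, and this is the subspace topology inherited from $TX$ — that is, the equalizer of $Tf$ and $Tg$ in $\TOP$. The final assertion then follows from the same computation: an embedding $m\colon S\to X$ in $\MET$ is, up to isomorphism, the inclusion of a subset carrying the initial (hence restricted) metric, so $Tm$ is, up to isomorphism, the inclusion of a subset equipped with the restriction of $a_s$, i.e.\ with the subspace topology; thus $Tm$ is a subspace embedding.

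The argument is essentially bookkeeping once the reduction to terminal object, binary products, and equalizers is in place, the one recurring ingredient being that symmetrisation commutes with all three constructions at the level of metrics. The only genuine subtlety to keep track of is the restriction to \emph{finite} limits in the binary-product step; overlooking it would lead to the false claim that $T$ preserves all limits.
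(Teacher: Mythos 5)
Your argument is correct. The paper states this lemma without proof (it is merely ``recalled''), and your reduction to the terminal object, binary products and equalizers, combined with the three classical pseudometric facts --- that symmetrisation commutes with the max-metric on a binary product and with restriction to a subset, and that the max-pseudometric on a finite product induces the product topology while a restricted pseudometric induces the subspace topology --- is exactly the expected argument. Your aside on why finiteness is essential is also consistent with the paper: the product in $\MET$ carries the sup-metric, whose topology on an infinite product is the uniform rather than the product topology, which is precisely why Proposition~\ref{d:prop:1} has to pass to the weighted-sum metric $\sum_n 2^{-(n+1)}a_n$ to handle countable products.
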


\begin{lemma}
  Let $(X,a)$ be a separated compact metric space. Then $X$ equipped with the
  order and the topology induced by the metric $a$ becomes a partially ordered
  compact space.
\end{lemma}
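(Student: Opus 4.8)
The plan is to verify, for the set $X$ equipped with the order $O(X,a)$ and the topology $T(X,a)$, the three defining conditions of a partially ordered compact space: the order must be a partial order, the topology must be compact, and the graph of the order must be closed in the product. The first two are essentially immediate; the only real content lies in the closedness of the order relation, which I will deduce from a continuity argument for $a$ itself.

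First, concerning the order: by the construction of $O\colon\MET\to\ORD$, the relation $x\le y:\Longleftrightarrow 0\geqslant a(x,y)$ is reflexive, since $0\geqslant a(x,x)$, and transitive, since $a(x,z)\leqslant a(x,y)\oplus a(y,z)=0$ whenever $a(x,y)=0=a(y,z)$; anti-symmetry is precisely the hypothesis that $(X,a)$ is separated. Hence $\le$ is a partial order. Secondly, compactness of the topology $T(X,a)$ is part of the assumption.

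Thirdly -- and this is the only step requiring an argument -- I must show that
\[
  R:=\{(x,y)\in X\times X\mid x\le y\}=\{(x,y)\in X\times X\mid a(x,y)=0\}
\]
is closed in $X\times X$ with respect to the product of $T(X,a)$ with itself. The key is the ``two-sided'' triangle inequality: applying the triangle inequality twice and then $a\leqslant a_s$ in the two outer terms gives, for all $x,x',y,y'\in X$,
\[
  a(x,y)\;\leqslant\;a(x,x')\oplus a(x',y')\oplus a(y',y)\;\leqslant\;a_s(x,x')\oplus a(x',y')\oplus a_s(y,y'),
\]
and symmetrically with primed and unprimed points exchanged; this says that $a\colon X\times X\to[0,1]$ is non-expansive from the (symmetrised) metric $((x,y),(x',y'))\mapsto a_s(x,x')\oplus a_s(y,y')$ on $X\times X$ to $[0,1]$ with the metric $\hom$ of Example~\ref{d:ex:1}, hence continuous for the associated topologies. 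Since $T$ preserves finite products (Lemma~\ref{d:lem:1}), the topology that metric induces on $X\times X$ is exactly the product topology, so $a$ is continuous on $X\times X$; as $\{0\}$ is closed in $[0,1]$, $R=a^{-1}(\{0\})$ is closed, which finishes the proof.

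I expect no genuine obstacle here: the only points to watch are the reversed order on the quantale (so that ``$0\geqslant a(x,y)$'' means ``$a(x,y)=0$''), the fact that truncated addition $\oplus$ agrees with ordinary addition in the range relevant to the triangle inequality (all values being $\leqslant 1$), and the identification of the metric product topology on $X\times X$ with the product topology, which is standard and in any case provided by Lemma~\ref{d:lem:1}. If one prefers to bypass the continuity of $a$ altogether, the same estimate shows directly that the limit $(x,y)$ of a convergent net $((x_i,y_i))_i$ in $R$ satisfies $a(x,y)\leqslant a_s(x,x_i)\oplus a_s(y_i,y)\to 0$, whence $(x,y)\in R$.
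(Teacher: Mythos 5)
Your proof is correct. Note that the paper itself does not argue this lemma at all: its ``proof'' is a pointer to Chapter~II of Nachbin's \emph{Topology and order}, so your direct verification is a genuine (and welcome) substitute rather than a rephrasing. The decomposition into three checks is exactly right, and the only substantive point --- closedness of the graph of $\le$ --- is handled soundly: the estimate $a(x,y)\leqslant a_s(x,x')\oplus a(x',y')\oplus a_s(y,y')$ and its mirror image give $|a(x,y)-a(x',y')|\leqslant a_s(x,x')\oplus a_s(y,y')$, i.e.\ (Lipschitz) continuity of $a$ for the product topology, and $\{0\}$ is closed in $[0,1]$ because the symmetrisation of $\hom(u,v)=v\ominus u$ is $|u-v|$, recovering the Euclidean topology. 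Two small points you correctly flag and that are indeed the only places one could slip: the truncation in $\oplus$ is harmless since all values of $a$ are bounded by $1$, and the metric $a_s(x,x')\oplus a_s(y,y')$ on $X\times X$ induces the product topology (it is topologically equivalent to the symmetrised product metric, and Lemma~\ref{d:lem:1} identifies that with the product topology). Your closing remark that the net argument bypasses continuity of $a$ altogether is also valid and is arguably the shortest route.
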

\begin{proof}
  See \cite[Chapter~II]{Nac65}.
\end{proof}

\begin{example}
  The metric space $[0,1]$ of Example~\ref{d:ex:1} induces the partially ordered
  compact Hausdorff space $[0,1]$ with the usual Euclidean topology and the
  ``greater or equal'' relation $\geqslant$.
\end{example}

\begin{definition}
  A partially ordered compact space $X$ is called \df{metrisable} whenever there
  is a metric on $X$ which induces the order and the topology of $X$. We denote
  the full subcategory of $\POSCH$ defined by all metrisable spaces by
  $\POSCH_\met$.
\end{definition}

\begin{proposition}\label{d:prop:1}
  $\POSCH_\met$ is closed under countable limits in $\POSCH$.
\end{proposition}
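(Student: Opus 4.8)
The plan is to form the limit $L$ of the given countable diagram $D\colon I\to\POSCH$ in $\POSCH$, with limit cone $(p_i\colon L\to D(i))_{i\in I}$, and then to exhibit by hand a metric on the underlying set of $L$ inducing both its order and its topology. First I would enumerate the objects of $I$ as $i_0,i_1,i_2,\dots$ (a finite or countably infinite list is all that is needed) and, using metrisability of each $D(i_n)$, fix a metric $a_n\colon D(i_n)\times D(i_n)\to[0,1]$ inducing the order and the topology of $D(i_n)$. Writing $p_n\coloneqq p_{i_n}$, I would set
\[
  a(x,y)=\sum_{n}\frac{1}{2^{n+1}}\,a_n(p_n x,p_n y)\qquad(x,y\in L),
\]
and check that $a$ takes values in $[0,1]$, that $a(x,x)=0$, and that $a(x,y)\oplus a(y,z)\geqslant a(x,z)$; the triangle inequality reduces term by term to those of the $a_n$ once one notes that $\min(1,s)\geqslant t$ is equivalent to $s\geqslant t$ whenever $t\leqslant 1$. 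This makes $(L,a)$ an object of $\MET$.

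Next I would verify that $a$ induces the order of $L$. Since the forgetful functor $\POSCH\to\POST$ preserves limits, $x\leqslant y$ in $L$ holds precisely when $p_i(x)\leqslant p_i(y)$ in $D(i)$ for every object $i$ of $I$; as each weight $\tfrac{1}{2^{n+1}}$ is strictly positive and each $a_n$ induces the order of $D(i_n)$, the condition $0\geqslant a(x,y)$ unwinds to exactly this, so $O(L,a)$ is the ordered set underlying $L$. For the topology I would compare the symmetrisation $a_s$ with the metric obtained by pulling back along $(p_n)_n\colon L\to\prod_n D(i_n)$ the standard product metric $d\big((z_n)_n,(w_n)_n\big)=\sum_n\tfrac{1}{2^{n+1}}(a_n)_s(z_n,w_n)$: the estimate $a_s(x,y)\leqslant\sum_n\tfrac{1}{2^{n+1}}(a_n)_s(p_nx,p_ny)\leqslant 2\,a_s(x,y)$ shows these two metrics are bi-Lipschitz equivalent, hence induce the same topology on $L$, namely the subspace topology inherited from $\prod_n D(i_n)$ with its product topology. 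On the other hand, since $\POSCH\to\COMPHAUS$ preserves limits, $L$ carries exactly that subspace topology, by the construction of limits in $\COMPHAUS$ as closed subspaces of products (here $\prod_n D(i_n)$ is the full product over the enumerated objects). Thus $T(L,a)$ is the space $L$, and $a$ witnesses $L\in\POSCH_\met$.

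The step I expect to be the main obstacle is matching the topology. It is tempting to form the limit directly in $\MET$ and transport it through $T$, but the limit in $\MET$ carries the supremum metric, and $T\colon\MET\to\TOP$ is only known to preserve \emph{finite} limits — indeed the supremum metric fails to induce the product topology on an infinite product. So the weighted‑sum metric has to be introduced by hand, and the real content is precisely the identification of the topology it induces on $L$ with the limit topology computed in $\COMPHAUS$, i.e.\ the classical fact that a countable product of metrisable spaces is metrisable via such a weighted sum, together with the closed‑subspace description of limits in $\COMPHAUS$. By contrast, the order part is essentially formal, being immediate from preservation of limits by $\POSCH\to\POST$; and the verification that $a$ is a metric is the routine computation sketched above.
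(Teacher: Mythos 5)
Your proof is correct and rests on the same key device as the paper's: the weighted-sum metric $\sum_n 2^{-(n+1)}a_n$ that metrises a countable product, combined with the fact that the limit sits inside that product as a subspace with the induced order. The only organisational difference is that the paper first splits countable limits into finite limits (Lemma~\ref{d:lem:1}) plus countable products, whereas you metrise the limit of an arbitrary countable diagram in one step by pulling the weighted-sum metric back along the jointly embedding projection cone --- a difference of packaging, not of argument.
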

\begin{proof}
  By Lemma~\ref{d:lem:1}, $\POSCH_\met$is closed under finite limits in
  $\POSCH$. The argument for countable products is the same as in the classical
  case: for a family $(X_n)_{n\in\N}$ of metrisable partially ordered compact
  Hausdorff spaces, with the metric $a_n$ on $X_n$ ($n\in\N$), the structure of
  the product space $X=\prod_{n\in\N}X_n$ is induced by the metric $a$ defined
  by
  \[
    a((x_n)_{n\in\N},(y_n)_{n\in\N})=\sum_{n=0}^\infty\frac{1}{2^{n+1}}a_n(x_n,y_n),
  \]
  for $(x_n)_{n\in\N},(y_n)_{n\in\N}\in X$.
\end{proof}

For classical metric spaces it is known that the compact spaces are subspaces of
countable powers of the unit interval; this fact carries over without any
trouble to our case. Before we present the argument, let us recall that, for
every metric space $(X,a)$, the cone $(a(x,-)\colon X\to [0,1])_{x\in X}$ is
initial with respect to the forgetful functor $\MET\to\SET$; this is a
consequence of the Yoneda Lemma for $[0,1]_\oplus$-categories (see
\cite{Law73}). Moreover, $(X,a)$ is separated if and only if this cone is
point-separating.

\begin{lemma}\label{d:lem:2}
  Let $(X,a)$ be a compact metric space. Then there exists a countable subset
  $S\subseteq X$ so that the cone
  \[
    (a(z,-)\colon X\to [0,1])_{z\in S}
  \]
  is initial with respect to the forgetful functor $\MET\to\SET$.
\end{lemma}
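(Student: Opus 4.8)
The plan is to mimic the classical proof that a compact metric space is separable, then extract from a countable dense subset the desired countable point-separating initial cone. First I would invoke compactness of $(X,a_s)$ — recall that $(X,a)$ is separated (so $a_s$ is a genuine metric inducing a compact, hence second countable, Hausdorff topology on $X$). Thus $(X,a_s)$ is separable: pick a countable subset $S\subseteq X$ which is dense with respect to the topology induced by $a_s$. Concretely, for each $n\in\N$ cover $X$ by the open balls $\{y : a_s(x,y) < \tfrac{1}{n+1}\}$, extract a finite subcover by compactness, and let $S$ be the (countable) union of the centres over all $n$.

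Next I would verify that the cone $(a(z,-)\colon X\to[0,1])_{z\in S}$ is initial with respect to $\MET\to\SET$, i.e.\ that the metric $a$ on $X$ is recovered as $a(x,y) = \sup_{z\in S} \hom(a(z,x), a(z,y)) = \sup_{z\in S}\bigl(a(z,y)\ominus a(z,x)\bigr)$. One inequality is automatic from the triangle inequality: $a(z,y)\leqslant a(z,x)\oplus a(x,y)$ gives $a(z,y)\ominus a(z,x)\leqslant a(x,y)$ for every $z$, so the supremum is $\leqslant a(x,y)$. For the reverse inequality, fix $x,y$ and $\varepsilon>0$; by density of $S$ in $(X,a_s)$ choose $z\in S$ with $a_s(z,x)<\varepsilon$, in particular $a(z,x)<\varepsilon$ and $a(x,z)<\varepsilon$. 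Then $a(z,y)\geqslant a(x,y)\ominus a(x,z) > a(x,y)-\varepsilon$ (using $a(x,y)\leqslant a(x,z)\oplus a(z,y)$), while $a(z,x)<\varepsilon$, so $a(z,y)\ominus a(z,x) > a(x,y) - 2\varepsilon$. Letting $\varepsilon\to 0$ yields $\sup_{z\in S}\bigl(a(z,y)\ominus a(z,x)\bigr)\geqslant a(x,y)$, which is exactly the statement that the cone $(a(z,-))_{z\in S}$ is initial, by the Yoneda-Lemma description of initial cones in $\MET$ recalled just before the lemma.

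The main obstacle, such as it is, is bookkeeping rather than conceptual: one must be careful that "dense in $(X,a_s)$" is the right notion (it is, since $T\colon\MET\to\TOP$ sends $(X,a)$ to the topology of $a_s$, and compact metrisable spaces are second countable hence separable), and that the truncation at $1$ in $\oplus$ and $\ominus$ does not spoil the estimates — it does not, since all the relevant distances can be taken $<1$ once $\varepsilon$ is small and the quantities of interest lie in $[0,1]$. I would also note that separatedness of $(X,a)$ is used precisely to ensure $a_s$ is a metric in the classical sense (so that "separable" applies); without it one first passes to the separated reflection, but in our situation $X\in\POSCH_\met$ is already separated. Everything else is the standard $\varepsilon/2$–argument transported into the $[0,1]_\oplus$-enriched language.
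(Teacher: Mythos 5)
Your proof is correct and follows essentially the same route as the paper: build $S$ as a countable union of finite $\tfrac{1}{n}$-nets with respect to the symmetrised metric, then verify $a(x,y)=\sup_{z\in S}\bigl(a(z,y)\ominus a(z,x)\bigr)$ by the same $2\varepsilon$-estimate. (Your remark about separatedness is unnecessary caution: the finite-subcover construction works for pseudometrics just as well, and the paper does not assume separation here.)
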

\begin{proof}
  Since $X$ is compact, for every natural number $n\geq 1$, there is a finite
  set $S_n$ so that the open balls
  \[
    \{y\in X\mid a(x,y)<\frac{1}{n}\text{ and }a(y,x)<\frac{1}{n}\}
  \]
  with $x\in S_n$ cover $X$. Let $S=\bigcup_{n\geq 1}S_n$. We have to show that,
  for all $x,y\in X$,
  \[
    \bigvee_{z\in S} a(z,y)\ominus a(z,x)\geqslant a(x,y).
  \]
  To see that, let $\varepsilon=\frac{1}{n}$, for some $n\geq 1$. By
  construction, there is some $z\in S$ so that $a(x,z)<\varepsilon$ and
  $a(z,x)<\varepsilon$. Hence,
  \[
    (a(z,y)\ominus a(z,x))+2\varepsilon \geqslant a(z,y)+a(x,z)\geqslant a(x,y);
  \]
  and the assertion follows.
\end{proof}

\begin{proposition}\label{d:prop:2}
  Every partially ordered compact space is a $\aleph_1$-cofiltered limit in
  $\POSCH$ of metrisable spaces.
\end{proposition}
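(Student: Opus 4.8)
The plan is to realise an arbitrary partially ordered compact space $X$ as an $\aleph_1$-cofiltered limit of metrisable spaces by taking, as the indexing category, the poset of "countable blocks" of monotone continuous functions $X\to[0,1]$ and assembling the corresponding quotient spaces. Concretely, let $\mathcal{F}$ be the set of all countable subsets $S$ of $CX=\hom(X,[0,1])$, ordered by inclusion; this is $\aleph_1$-directed since the union of countably many countable sets is countable. For each $S\in\mathcal{F}$ let $X_S$ be the image, in the (Epi, Regular mono)-factorisation in $\POSCH$, of the map $\langle f\rangle_{f\in S}\colon X\to [0,1]^S$, with quotient map $q_S\colon X\to X_S$. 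Because $[0,1]^S$ is a countable power of the unit interval it is metrisable by Proposition~\ref{d:prop:1}, and a closed subspace of a metrisable space is again metrisable (the restricted metric still induces the subspace order and topology), so each $X_S$ lies in $\POSCH_\met$. For $S\subseteq S'$ the universal property of the factorisation yields a unique monotone continuous $q_{S',S}\colon X_{S'}\to X_S$ with $q_{S',S}\cdot q_{S'}=q_S$, and these make $S\mapsto X_S$ a diagram $D\colon\mathcal{F}\to\POSCH_\met\hookrightarrow\POSCH$ with a compatible cone $(q_S\colon X\to X_S)_{S\in\mathcal{F}}$.

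It then remains to check that this cone is a limit cone. I would verify the intrinsic criterion: the cone is mono (equivalently, jointly point-separating and jointly initial), and it satisfies the image-intersection condition of Theorem~\ref{thm:codirected-limits-COMPHAUS} applied to the underlying diagram in $\COMPHAUS$ — noting that a cone in $\POSCH$ is a limit iff it is so in $\COMPHAUS$ and is initial, by the remarks on initial cones in Section~\ref{sec:quas-posch-abstr}. Joint point-separation: if $x\neq y$ then, since $[0,1]$ is a regular cogenerator of $\POSCH$, there is $f\in CX$ with $f(x)\neq f(y)$, hence $q_{\{f\}}$ already separates $x$ and $y$. Joint initiality: if $x_0\not\le x_1$ in $X$, then again because $[0,1]$ is a regular cogenerator (so $CX$ is jointly initial) there is $f\in CX$ with $f(x_0)\not\le f(x_1)$, and $q_{\{f\}}$ witnesses this. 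For the image condition, fix $S$ and an element of $X_S$ represented by some $x\in X$; I must show that any point of $X_S$ lying in $\im q_{S',S}$ for all $S'\supseteq S$ is in $\im q_S$, which here is automatic since every $q_S$ is surjective, so $\im q_S=X_S$ and the intersection is all of $X_S$ — the content is rather that the limit $L$ of the diagram maps onto each $X_S$, and the canonical comparison $X\to L$ is then an isomorphism precisely because the cone $(q_S)$ is mono and each leg is epi.

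The cleanest way to package the last point is: the induced comparison map $k\colon X\to L=\lim D$ is mono because the cone $(q_S)$ is mono, and it is epi (indeed a regular epi / closed surjection) — for this I would argue that $k[X]$ is a closed subspace of $L$ whose projection to each $X_S$ is all of $X_S$, so by the intersection characterisation of $\lim D$ in $\COMPHAUS$ (Theorem~\ref{thm:codirected-limits-COMPHAUS}) $k[X]=L$; then a monic regular epi in $\POSCH$ is an isomorphism. Alternatively, and perhaps more transparently, one shows directly that the mono cone $(q_S)$ already satisfies condition (ii) of Theorem~\ref{thm:codirected-limits-COMPHAUS} and invokes the theorem to conclude $X\cong\lim D$ in $\COMPHAUS$, upgrading to $\POSCH$ via initiality.

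The main obstacle I anticipate is not any single hard estimate but getting the bookkeeping exactly right: one must be careful that the transition maps $q_{S',S}$ are genuinely well defined and functorial (this rests on uniqueness in the factorisation system), that "closed subspace of metrisable is metrisable" is invoked with the correct — order-and-topology-inducing — metric, and above all that the image-intersection condition is checked against the maps out of the limit object rather than naively. The surjectivity of each $q_S$ makes the intersection condition degenerate, so the real work is the monic + regular-epi-legs argument identifying $X$ with $\lim D$; once the cofinality/$\aleph_1$-directedness of $\mathcal{F}$ is noted, nothing further is needed.
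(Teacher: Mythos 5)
Your proof is correct, and it takes a slightly different route from the paper's. The paper uses the canonical $\aleph_1$-cofiltered diagram $D\colon X\downarrow\POSCH_\met\to\POSCH$ over the comma category of \emph{all} maps from $X$ into metrisable spaces (after observing that $\POSCH_\met$ is essentially small via the embedding $Y\hookrightarrow[0,1]^\N$ of Lemma~\ref{d:lem:2}); there the legs of the cone need not be surjective, and the image condition of Theorem~\ref{thm:codirected-limits-COMPHAUS} is verified by the non-trivial observation that for each $f\colon X\to Y$ the subobject $\im f\hookrightarrow Y$ is itself an object of $X\downarrow\POSCH_\met$, so the intersection of images collapses to $\im f$. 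You instead index by the $\aleph_1$-directed poset of countable subsets $S\subseteq C(X)$ and take the (Epi, Regular mono)-images $X_S$ of $\langle f\rangle_{f\in S}\colon X\to[0,1]^S$; since every leg and every transition map is then surjective, the image-intersection condition becomes vacuous and only the mono/initiality of the cone (from $[0,1]$ being a regular cogenerator with the cone $C(X)$ initial) carries content. Both arguments rest on the same ingredients --- Proposition~\ref{d:prop:1}, metrisability of closed subspaces with the induced order, and Bourbaki's criterion lifted along the initial cone --- so this is a legitimate trade: your construction avoids the smallness discussion and the $\im f$ argument at the cost of an explicit factorisation bookkeeping (well-definedness and functoriality of the $q_{S',S}$, which you correctly reduce to uniqueness of diagonal fill-ins). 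One remark: the detour in your third paragraph about the comparison map $k\colon X\to L$ being a monic regular epi is superfluous --- a mono cone with all legs epi is not a limit in general, but your ``alternative packaging'' (mono cone plus the trivially satisfied condition (ii) of Theorem~\ref{thm:codirected-limits-COMPHAUS}, then upgrading from $\COMPHAUS$ to $\POSCH$ by initiality) is exactly the clean and complete argument; I would keep only that version.
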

\begin{proof}
  For a separated metric space $X=(X,a)$, the initial cone
  $(a(x,-)\colon X\to [0,1])_{x\in S}$ of Lemma~\ref{d:lem:2} is automatically
  point-separating, therefore there is an embedding $X\hookrightarrow [0,1]^\N$
  in $\MET$. This proofs that the full subcategory $\POSCH_\met$ of $\POSCH$ is
  small. Let $X$ be a partially ordered compact space. By
  Proposition~\ref{d:prop:1}, the canonical diagram
  \[
    D\colon X\downarrow \POSCH_\met \longrightarrow \POSCH
  \]
  is $\aleph_1$-cofiltered. Moreover, the canonical cone
  \begin{equation}\label{d:eq:2}
    (f\colon X\to Y)_{f\in (X\downarrow \POSCH_\met)}
  \end{equation}
  is initial since \eqref{d:eq:2} includes the cone $(f\colon X\to
  [0,1])_f$. Finally, to see that \eqref{d:eq:2} is a limit cone, we use
  Theorem~\ref{thm:codirected-limits-COMPHAUS}: for every $f\colon X\to Y$ with
  $Y$ metrisable, $\im f\hookrightarrow Y$ actually belongs to $\POSCH_\met$,
  which proves
  \[
    \im f=\bigcap_{k\colon g\to f\in(X\downarrow \POSCH_\met)}\im D(k).\qedhere
  \]
\end{proof}

\begin{corollary}
  Every $\aleph_1$-copresentable object in $\POSCH$ is metrisable.
\end{corollary}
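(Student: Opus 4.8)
The plan is to deduce this from Proposition~\ref{d:prop:2}, which already presents every partially ordered compact space as an $\aleph_1$-cofiltered limit of metrisable ones; copresentability will then force $X$ to be a retract of a single such space, hence metrisable itself.

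First I would spell out the hypothesis. An object $X$ is $\aleph_1$-copresentable in $\POSCH$ precisely when it is $\aleph_1$-presentable in $\POSCH^\op$, that is, when $\hom(-,X)\colon\POSCH^\op\to\SET$ preserves $\aleph_1$-directed colimits; equivalently, $\hom(-,X)$ sends $\aleph_1$-cofiltered limits in $\POSCH$ to $\aleph_1$-directed colimits in $\SET$. So let $D\colon I\to\POSCH$, with $I=X\downarrow\POSCH_\met$, be the canonical diagram of Proposition~\ref{d:prop:2} --- it is $\aleph_1$-cofiltered --- and let $(p_i\colon X\to D(i))_{i\in I}$ be its limit cone. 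Applying $\hom(-,X)$ makes $\hom(X,X)$ the $\aleph_1$-directed colimit of the hom-sets $\hom(D(i),X)$, with coprojections given by precomposition with the $p_i$.

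Next I would locate the identity of $X$ inside this colimit: there must be an index $i\in I$ and a morphism $s\colon D(i)\to X$ in $\POSCH$ with $s\cdot p_i=1_X$. Hence $e:=p_i\cdot s\colon D(i)\to D(i)$ is an idempotent split by the pair $(p_i,s)$, and one checks directly that $(X,p_i)$ is the equalizer of $e$ and $1_{D(i)}$ in $\POSCH$ (for $g$ with $e\cdot g=g$, the unique factorization is $s\cdot g$). Since $D(i)$ is metrisable and $\POSCH_\met$ is closed under finite limits in $\POSCH$ by Proposition~\ref{d:prop:1} (which in turn rests on Lemma~\ref{d:lem:1}), the equalizer $X$ is metrisable, as claimed. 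If one prefers to bypass the equalizer bookkeeping, one can instead observe that $p_i$ is injective, is an order embedding because $s$ is monotone, and is a topological embedding because it is a continuous injection from a compact space into a Hausdorff one; so $X$ is, up to isomorphism, a closed subspace of the metrisable space $D(i)$ with the induced order and topology, and restricting a compatible metric on $D(i)$ witnesses the metrisability of $X$.

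The one point requiring care is the variance: one must be sure that $\aleph_1$-copresentability of $X$ is exactly what forces $\hom(-,X)$ to turn the cofiltered limit of Proposition~\ref{d:prop:2} into a directed colimit of hom-sets, so that $1_X$ genuinely factors through some $p_i$. Everything after that is formal category theory together with the closure of $\POSCH_\met$ under finite limits.
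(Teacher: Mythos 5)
Your argument is correct and coincides with the paper's proof: both present $X$ as the $\aleph_1$-cofiltered limit of metrisable spaces from Proposition~\ref{d:prop:2}, use $\aleph_1$-copresentability to factor $1_X$ through some $p_i$, and conclude that $X$ is (a retract of, hence a subspace of) a metrisable space. The extra detail you supply --- the equalizer-of-an-idempotent bookkeeping and the verification that a split mono in $\POSCH$ is an embedding --- is a correct elaboration of the paper's one-line conclusion rather than a different route.
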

\begin{proof}
  Also here the argument is the same as for $\COMPHAUS$. Let $X$ be a
  $\aleph_1$-copresentable object in $\POSCH$. By Proposition~\ref{d:prop:2}, we
  can present $X$ as a limit $(p_i\colon X\to X_i)_{i\in I}$ of a
  $\aleph_1$-codirected diagram $D\colon I\to\POSCH$ where all $D(i)$ are
  metrisable. Since $X$ is $\aleph_1$-copresentable, the identity
  $1_X\colon X\to X$ factorises as
  \[
    X\xrightarrow{\;p_{i}\;} X_i\xrightarrow{\;h\;}X,
  \]
  for some $i\in I$. Hence, being a subspace of a metrisable space, $X$ is
  metrisable.
\end{proof}

To prove that every metrisable partially ordered compact space $X$ is
$\aleph_1$-copresentable, we will show that every closed subspace
$A\hookrightarrow [0,1]^I$ with $I$ countable is an equaliser of a pair of
arrows
\[
  \xymatrix{[0,1]^I\ar@<-.5ex>[r] \ar@<.5ex>[r] & [0,1]^J}
\]
with also $J$ being countable. For a symmetric metric on $X$, one can simply
consider
\[
  \xymatrix{[0,1]^I\ar@<-.5ex>[r]_{0} \ar@<.5ex>[r]^{a(A,-)} & [0,1],}
\]
but in our non-symmetric setting this argument does not work. We start with an
auxiliary result which follows from Theorem~\ref{d:thm:2}. Our argument here is
a slight modification of the one used in the characterisation of regular
monomorphisms in $\POSCH$ obtained in \cite{HN16_tmp}.

\begin{lemma}\label{d:lem:3}
  Let $X$ be a partially ordered compact space, $A,B\subseteq X$ closed subsets
  so that $A\cap B=\varnothing$ and $B=\downc B\cap \upc B$. Then there is a
  family $(f_u\colon X\to [0,1])_{u\in [0,1]}$ of monotone continuous maps which
  all coincide on $A$ and, moreover, satisfy $f_u(y)=u$, for all $u\in [0,1]$
  and $y\in B$.
\end{lemma}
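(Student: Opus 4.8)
The plan is to construct the family $(f_u)_{u \in [0,1]}$ by using injectivity of $[0,1]$ in $\POSCH$ (Theorem~\ref{d:thm:2}) to extend a carefully chosen monotone continuous map defined on the closed subspace $A \cup B$. First I would observe that $A \cup B$ is a closed subspace of $X$, hence itself a partially ordered compact space with the induced order; moreover, since $A \cap B = \varnothing$ and $B = \downc B \cap \upc B$, the order on $A \cup B$ has no new comparabilities bridging $A$ and $B$ beyond those that are forced — more precisely, the hypothesis $B = \downc B \cap \upc B$ guarantees that if $y \in B$ and $x \in A$ with $x \le y$ or $y \le x$, this does not create an obstruction, because $B$ being ``order-convex-closed'' in the stated sense lets us treat $B$ as sitting at a fixed level. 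I would make this precise by checking that, for each fixed $u \in [0,1]$, the map
\[
  g_u \colon A \cup B \longrightarrow [0,1], \qquad
  g_u(x) =
  \begin{cases}
    c(x) & x \in A,\\
    u & x \in B,
  \end{cases}
\]
is monotone and continuous, where $c \colon A \to [0,1]$ is one fixed monotone continuous map (any constant works, so the $f_u$ will ``all coincide on $A$''). Continuity is immediate since $A$ and $B$ are disjoint closed sets covering $A \cup B$; monotonicity is where $B = \downc B \cap \upc B$ is used.

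The key point for monotonicity: suppose $x_0 \le x_1$ in $A \cup B$. If both lie in $A$ or both in $B$ there is nothing to check (in the $B$ case $g_u$ is constant $u$). If $x_0 \in A$ and $x_1 \in B$, I need $c(x_0) \le u$; if $x_0 \in B$ and $x_1 \in A$, I need $u \le c(x_1)$. Since I am free to choose $c$, the honest move is: take $c$ to be the constant map with value $u$ as well — but then $g_u$ is globally constant and the conclusion $f_u|_A$ independent of $u$ fails. So instead I would not fix a single $c$; rather, for the extension to work I only need that the assignment $x \mapsto u$ on $B$ and $x \mapsto$ (something fixed) on $A$ is monotone, and the cleanest choice making all $f_u$ agree on $A$ is to set the value on $A$ to be, say, the constant $1$ if no element of $A$ lies below any element of $B$, or more carefully to split by whether $B$ lies ``above'' or ``below'' $A$. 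This is exactly the subtlety flagged in the surrounding text (the non-symmetric setting), and handling it is the main obstacle: one must verify, using $B = \downc B \cap \upc B$ together with $A \cap B = \varnothing$ and closedness, that the poset $A \cup B$ decomposes so that $B$ can consistently be assigned a single value $u$ compatibly with a $u$-independent monotone continuous function on $A$. I expect this reduces to showing that the comparabilities between $A$ and $B$ go only one way, or can be absorbed by choosing the value on $A$ to be $0$ (resp. $1$) on the relevant part.

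Once $g_u \colon A \cup B \to [0,1]$ is monotone and continuous, Theorem~\ref{d:thm:2} (injectivity of $[0,1]$ with respect to embeddings) applied to the embedding $A \cup B \hookrightarrow X$ yields a monotone continuous extension $f_u \colon X \to [0,1]$ with $f_u|_{A \cup B} = g_u$. By construction $f_u(y) = u$ for all $y \in B$, and $f_u|_A = g_u|_A$ is the fixed $u$-independent map, so all the $f_u$ coincide on $A$. This gives the desired family. The only real work, as indicated, is the monotonicity check for $g_u$, i.e.\ extracting from $B = \downc B \cap \upc B$ and $A \cap B = \varnothing$ the combinatorial statement that lets a constant value on $B$ coexist with a fixed monotone function on $A$; everything after that is a one-line appeal to Nachbin's injectivity theorem.
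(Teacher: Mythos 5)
Your overall strategy --- build a monotone continuous map on the closed subspace $A\cup B$ and extend it to $X$ via Nachbin's injectivity theorem (Theorem~\ref{d:thm:2}) --- is the same as the paper's, but your write-up has a genuine gap precisely where you flag ``the main obstacle'': you never actually produce the $u$-independent monotone continuous map on $A$, and the fallback guesses you offer do not work. A single constant on $A$ cannot be compatible with the value $u$ on $B$ for \emph{every} $u$ unless $A$ is incomparable with $B$, and the comparabilities between $A$ and $B$ need not ``go only one way'': in general $A$ contains points above points of $B$ and, simultaneously, points below points of $B$.

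The missing construction is this. Put $A_0=A\cap\upc B$ and $A_1=A\cap\downc B$; these are closed (up- and down-closures of closed sets are closed in a partially ordered compact space), and the hypothesis $B=\downc B\cap\upc B$ together with $A\cap B=\varnothing$ yields both $A_0\cap A_1=\varnothing$ and the order separation needed for monotonicity: if $x_0\in A_0$, $x_1\in A_1$ and $x_0,x_1$ were comparable the wrong way round, then from $y_0\le x_0$, $x_1\le y_1$ with $y_0,y_1\in B$ one gets a point of $A$ trapped in $\upc B\cap\downc B=B$, a contradiction. Monotonicity of each $f_u$ against the constant $u$ on $B$ \emph{forces} the value on $A\cap\downc B$ to be one endpoint of $[0,1]$ and the value on $A\cap\upc B$ to be the other, and the order separation just established says that this two-valued assignment on $A_0\cup A_1$ is monotone (and it is continuous, the two sets being disjoint and closed). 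A \emph{first} application of Nachbin's theorem then extends it to a monotone continuous $g\colon A\to[0,1]$ --- this is the $u$-independent map you were looking for, and it genuinely requires an extension step, since on $A\setminus(A_0\cup A_1)$ it must interpolate and cannot be taken (locally) constant. Only after that does your final step go through: for each $u$ the map equal to $g$ on $A$ and to $u$ on $B$ is monotone and continuous on $A\cup B$, and a second application of Nachbin's theorem gives $f_u$. Without isolating $A_0$ and $A_1$ and performing this intermediate extension, the argument does not close.
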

\begin{proof}
  Put $A_0=A\cap \upc B$ and $A_1=A\cap\downc B$. Then $A_0$ and $A_1$ are
  closed subsets of $X$ and
  \[
    A_0\cap A_1=A\cap\upc B\cap\downc B=A\cap B=\varnothing.
  \]
  Moreover, for every $x_0\in A_0$ and $x_1\in A_1$, $x_0\nleq x_1$. In fact, if
  $x_0\geq y_0\in B$ and $x_1\leq y_1\in B$, then $x_0\leq x_1$ implies
  \[
    y_0\leq x_0\leq x_1\leq y_1,
  \]
  hence $x_0\in B$ which contradicts $A\cap B=\varnothing$. We define now the
  monotone continuous map
  \begin{align*}
    g \colon A_0\cup A_1 &\longrightarrow [0,1]\\
    x & \longmapsto
        \begin{cases}
          0 & \text{if }x\in A_0,\\
          1 & \text{if }x\in A_1.
        \end{cases}
  \end{align*}
  By \cite[Theorem 6]{Nac65}, $g$ extends to a monotone continuous map
  $g\colon A\to [0,1]$. Let now $u\in [0,1]$. We define
  \begin{align*}
    f_u \colon A\cup B & \longrightarrow [0,1]\\
    x & \longmapsto
        \begin{cases}
          g(x) & \text{if }x\in A,\\
          u & \text{if }x\in B.
        \end{cases}
  \end{align*}
  Using again \cite[Theorem 6]{Nac65}, $f_u$ extends to a monotone continuous
  map $f_u \colon X\to [0,1]$.
\end{proof}

\begin{corollary}\label{d:cor:1}
  Let $n\in \N$ and $A\subseteq [0,1]^n$ be a closed subset. Then there exist
  countable set $J$ and monotone continuous maps
  \[
    \xymatrix{[0,1]^n\ar@<-.5ex>[r]_{k} \ar@<.5ex>[r]^{h} & [0,1]^J}
  \]
  so that $A\hookrightarrow [0,1]^n$ is the equaliser of $h$ and $k$. In
  particular, $A$ is $\aleph_1$-copresentable.
\end{corollary}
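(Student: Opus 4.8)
The plan is to realize the closed subset $A \subseteq [0,1]^n$ as an intersection of "slabs" separating $A$ from points not in it, and then use Lemma \ref{d:lem:3} to convert each such separation into a coordinate of the map $h$ (with $k$ being a suitable constant or simple reference map). More precisely: a point $p \in [0,1]^n$ lies in the equalizer of $h$ and $k$ if and only if $h(p) = k(p)$ in each of the countably many coordinates indexed by $J$; so I want, for every $p \notin A$, some coordinate $j \in J$ on which $h$ and $k$ disagree at $p$ but agree on all of $A$.

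The key steps, in order. First, I would observe that since $[0,1]^n$ is a compact metric space and $A$ is closed, the complement $[0,1]^n \setminus A$ is covered by countably many open sets of a convenient form — e.g. by basic open boxes with rational endpoints whose closures miss $A$; let $\{B_j\}_{j \in J}$ be such a countable family, so $J$ is countable and $\bigcap_{j} ([0,1]^n \setminus B_j) = A$. Second, for each $j$, I apply Lemma \ref{d:lem:3}: the closure $\overline{B_j}$ need not satisfy $\overline{B_j} = \downc \overline{B_j} \cap \upc \overline{B_j}$, so I should instead take $B_j$ to be (the interior of) a set of the form $\{x \mid x_i \le c\}$ or $\{x \mid x_i \ge c\}$ — order-convex "half-slabs" in a single coordinate — which \emph{are} of the required form, since $[0,1]^n$ carries the product order and such a set equals its up-closure intersected with its down-closure. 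Since $A$ is closed and the complement of a closed set in a normal (even metric) space is a countable union of such half-open order-convex slabs with closures disjoint from $A$, this covering is available. Third, for each such slab $B_j$ with closure $\overline{B_j}$ disjoint from $A$, Lemma \ref{d:lem:3} produces monotone continuous maps $f^{(j)}_u \colon [0,1]^n \to [0,1]$, $u \in [0,1]$, all agreeing on $A$ and with $f^{(j)}_u \equiv u$ on $\overline{B_j}$; I pick two values $u_0 \ne u_1$ and set the $j$-th coordinate of $h$ to be $f^{(j)}_{u_1}$ and of $k$ to be $f^{(j)}_{u_0}$. Then $h$ and $k$ agree on $A$ (all coordinates do), while for $p \notin A$ there is some $j$ with $p \in B_j$, whence the $j$-th coordinates give $u_1 \ne u_0$; so the equalizer is exactly $A$. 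Finally, the equalizer is a regular mono into the $\aleph_1$-copresentable object $[0,1]^J$ with $[0,1]^J$ again a limit of finite powers, and an equalizer of maps between $\aleph_1$-copresentable objects is $\aleph_1$-copresentable (equalizers are $\aleph_1$-small limits), giving the last clause.

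The main obstacle I anticipate is the bookkeeping in the second and third steps: ensuring that the countable family of slabs genuinely has the property $B = \downc B \cap \upc B$ demanded by Lemma \ref{d:lem:3} \emph{and} that their closures are disjoint from $A$ \emph{and} that they cover $[0,1]^n \setminus A$ — all simultaneously. The cleanest route is probably to note that $[0,1]^n \setminus A$, being open in a second-countable space, is a countable union of basic open boxes $\prod_i (a_i, b_i)$ with rational $a_i, b_i$ and with $A$ disjoint from the closure; each such box is a finite intersection of coordinate half-slabs $\{x_i > a_i\}$ and $\{x_i < b_i\}$, and one applies Lemma \ref{d:lem:3} to each half-slab separately, so that $p$ lies outside $A$ iff it lies in some box iff it lies in \emph{all} the half-slabs constituting that box — this requires a moment's care because "membership in a box" is a conjunction, not a disjunction, of slab memberships, so I would instead index $J$ by pairs (box, constituent half-slab) and argue that $p \notin A$ forces $p$ into \emph{every} half-slab of \emph{some} box, hence that box's coordinates all separate, which is all that is needed. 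The rest is the routine verification that constant maps and the extended Nachbin maps are monotone continuous and that the resulting equalizer is computed pointwise.
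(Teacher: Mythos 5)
Your overall strategy --- cover $[0,1]^n\setminus A$ by countably many order-convex closed sets disjoint from $A$, feed each into Lemma~\ref{d:lem:3} to obtain a pair of monotone continuous maps agreeing on $A$ and taking distinct constant values on that set, and assemble these pairs into $h,k\colon[0,1]^n\to[0,1]^J$ --- is exactly the paper's, and your concluding copresentability argument (countable limits of copies of $[0,1]$) is fine. The gap is in your second step. You abandon boxes because you fear a closed box fails $B=\downc B\cap\upc B$, and you replace them by coordinate half-slabs $\{x\mid x_i\le c\}$, $\{x\mid x_i\ge c\}$. But the hypothesis of Lemma~\ref{d:lem:3} you actually need is $A\cap B=\varnothing$, and a half-slab will in general meet $A$ no matter how the ambient box was chosen: take $A$ to be the diagonal of $[0,1]^2$; then every $\{x_1\ge c\}$ contains $(1,1)\in A$ and every $\{x_1\le c\}$ contains $(0,0)\in A$, so Lemma~\ref{d:lem:3} applies to none of them, and your claim that $[0,1]^n\setminus A$ is a countable union of half-slabs with closures disjoint from $A$ is false. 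The repair you sketch at the end (indexing $J$ by pairs of a box and a constituent half-slab) does not help: if a half-slab $S$ meets $A$, no pair of maps can simultaneously agree on all of $A$ and take two distinct constant values on $S$ --- these requirements conflict at any point of $A\cap S$ --- so the coordinate maps you would need simply do not exist.

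The fix is to trust your first instinct. A closed box $B=\prod_i[a_i,b_i]$ \emph{does} satisfy $B=\upc B\cap\downc B$ for the product order: one checks $\upc B=\{z\mid z_i\ge a_i\text{ for all }i\}$ and $\downc B=\{z\mid z_i\le b_i\text{ for all }i\}$, whose intersection is $B$. Hence you may apply Lemma~\ref{d:lem:3} directly to each closed rational box missing $A$; by second countability these cover $[0,1]^n\setminus A$, and the rest of your argument goes through verbatim, with one coordinate of $(h,k)$ per box. This is essentially what the paper does, except that it indexes $J$ by closed balls $B(x,\tfrac1k)$ with rational centres; note that for the identity $B=\upc B\cap\downc B$ it is the balls of the max-metric, i.e.\ boxes, that work immediately, whereas Euclidean balls require more care in dimension $n\ge 2$.
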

\begin{proof}
  We denote by $d$ the usual Euclidean metric on $[0,1]^n$. For every
  $x\in [0,1]^n$ with $x\notin A$, there is some $\varepsilon>0$ so that the
  closed ball
  \[
    B(x,\varepsilon)=\{y\in [0,1]^n\mid d(x,y)\leqslant\varepsilon\}
  \]
  does not intersect $A$. Furthermore, $B=\upc B\cap\downc B$. Put
  \[
    J=\{(k,x_1,\dots x_n)\mid k\in\N,k\geq 1 \text{ and } x=(x_1,\dots x_n)\in
    ([0,1]\cap\Q)^n \text{ and } B(x,\frac{1}{k})\cap A=\varnothing\};
  \]
  clearly, $J$ is countable. For every $j=(k,x_1,\dots x_n)\in J$, we consider
  the monotone continuous maps $f_0,f_1 \colon [0,1]^n\to [0,1]$ obtained in
  Lemma~\ref{d:lem:3} and put $h_j=f_0$ and $k_j=f_1$. Then
  $A\hookrightarrow [0,1]^n$ is the equaliser of
  \[
    \xymatrix{[0,1]^n \ar@<-.5ex>[r]_{k=\langle k_j\rangle}
      \ar@<.5ex>[r]^{h=\langle h_j\rangle } & [0,1]^J}\qedhere
  \]
\end{proof}

\begin{theorem}
  Every metrisable partially ordered compact space is $\aleph_1$-copresentable
  in $\POSCH$.
\end{theorem}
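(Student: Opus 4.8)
The plan is to combine Proposition~\ref{d:prop:2} (which characterises $\aleph_1$-copresentability of the unit interval and, through Proposition~\ref{prop:unit-interval-copresentable}, sets the ambient machinery) with the equaliser construction of Corollary~\ref{d:cor:1}. Let $X$ be a metrisable partially ordered compact space. Since $X$ is metrisable, the argument leading to Proposition~\ref{d:prop:2} (via Lemma~\ref{d:lem:2}) gives a countable set $I$ and an embedding $X\hookrightarrow [0,1]^I$ in $\POSCH$; being an embedding, $X$ is a closed subspace of $[0,1]^I$ with the induced order. So it suffices to show that every closed subspace $A\hookrightarrow [0,1]^I$ with $I$ countable is $\aleph_1$-copresentable.

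First I would reduce to the finite-power case handled in Corollary~\ref{d:cor:1}. Write $[0,1]^I$ as the $\aleph_1$-codirected limit of the spaces $[0,1]^F$ over the (countable, hence $\aleph_1$-small) directed poset of finite subsets $F\subseteq I$, with connecting maps the projections. Since $A$ is closed in $[0,1]^I$, for each finite $F$ let $A_F=\pi_F[A]$ be its image in $[0,1]^F$; as $[0,1]^F$ is compact Hausdorff and the projection is a closed map on a compact space, each $A_F$ is closed in $[0,1]^F$, and by Theorem~\ref{thm:codirected-limits-COMPHAUS} (using that $\pi_F[A]=\bigcap_{F'\supseteq F}\im(A_{F'}\to A_F)$ follows from compactness) the space $A$ is the $\aleph_1$-codirected limit of the $A_F$. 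By Corollary~\ref{d:cor:1}, each $A_F$ is $\aleph_1$-copresentable in $\POSCH$.

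Now I would invoke the general fact that an $\aleph_1$-codirected limit of $\aleph_1$-copresentable objects, where moreover all the connecting maps are \emph{split epimorphisms} (in $\POSCH$), is itself $\aleph_1$-copresentable; more precisely, I would arrange the diagram so that the relevant retraction property holds. The connecting map $A_{F'}\to A_F$ for $F\subseteq F'$ need not be split in general, so instead I would argue directly: given an $\aleph_1$-codirected diagram $D$ in $\POSCH$ and a cocone on $\hom(\text{colim-data})$—dually, an $\aleph_1$-codirected limit $L=\lim D$ with $D(i)$ all $\aleph_1$-copresentable—one shows $L$ is $\aleph_1$-copresentable by expressing $\hom(L,-)$ as a suitable limit. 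Concretely, since each $A_F$ sits inside some $[0,1]^{n_F}$ as an equaliser of maps into a countable power $[0,1]^{J_F}$ (Corollary~\ref{d:cor:1}), I would assemble all these equaliser presentations over the countable index $I$ into a single equaliser presentation of $A$ itself: $A\hookrightarrow [0,1]^I$ is the equaliser of a pair of monotone continuous maps $[0,1]^I\rightrightarrows [0,1]^{J}$ with $J=\bigsqcup_F J_F$ countable, the maps being $\langle h_F\cdot\pi_F\rangle$ and $\langle k_F\cdot\pi_F\rangle$. That $A$ is the equaliser follows because a point of $[0,1]^I$ lies in $A$ iff each of its finite projections lies in the corresponding $A_F$. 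Since $\POSCH$ has equalisers and $[0,1]$ is $\aleph_1$-copresentable (Proposition~\ref{prop:unit-interval-copresentable}), so is any countable power $[0,1]^I$ (a countable, i.e.\ $\aleph_1$-small, limit of $\aleph_1$-copresentables—using Proposition~\ref{d:prop:1} in spirit, but really the closure of $\aleph_1$-copresentables under $\aleph_1$-small limits, \cite[Remark~1.30]{AR94} dualised), and an equaliser of two maps between $\aleph_1$-copresentable objects is again $\aleph_1$-copresentable. Hence $A$, and therefore $X$, is $\aleph_1$-copresentable.

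The main obstacle is the passage from the finite-power equalisers of Corollary~\ref{d:cor:1} to a single equaliser presentation of $A$ inside the infinite countable power $[0,1]^I$: one must verify that gluing the countably many finite-stage equations still \emph{exactly} cuts out $A$ (not merely its closure or some larger set), which is where closedness of $A$ and the compactness-based identity $A=\bigcap_F \pi_F^{-1}[A_F]$ are essential. Once that identity is in hand, the copresentability bookkeeping—$\aleph_1$-copresentables in $\POSCH$ being closed under $\aleph_1$-small limits, in particular under equalisers and countable powers of $[0,1]$—is routine and can be cited from \cite{AR94}.
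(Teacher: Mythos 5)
Your proposal is correct and follows essentially the same route as the paper: embed $X$ into a countable power $[0,1]^\N$ via Lemma~\ref{d:lem:2}, project onto the finite subpowers, apply Corollary~\ref{d:cor:1} to the (closed) images, and use a compactness argument to recover $X$ from these finite-stage data. The only genuine difference is the final assembly: the paper expresses $X$ as the limit, over the \emph{countable} poset of finite subsets of $\N$, of the images $X_F$ of $\pi_F\cdot m$ (using Bourbaki's criterion, Theorem~\ref{thm:codirected-limits-COMPHAUS}) and then invokes closure of $\aleph_1$-copresentable objects under $\aleph_1$-small limits; you instead concatenate the finite-stage equaliser presentations into a single equaliser $[0,1]^I\rightrightarrows[0,1]^J$ with $J$ countable, which buys a slightly more explicit presentation of $X$ at the cost of having to verify $A=\bigcap_F\pi_F^{-1}[A_F]$ directly -- a verification you correctly identify and which goes through by exactly the compactness argument underlying Bourbaki's criterion. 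Two small wobbles: the poset of finite subsets of a countable set is codirected and countable but \emph{not} $\aleph_1$-codirected (if it were, Proposition~\ref{d:prop:2} would make every space $\aleph_1$-copresentable), so the relevant point is $\aleph_1$-smallness of the index, which your parenthetical does capture; and the detour through split epimorphisms is unnecessary for the same reason -- once the index is $\aleph_1$-small, the standard closure of $\aleph_1$-copresentables under $\aleph_1$-small limits finishes the argument, as in the paper.
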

\begin{proof}
  Let $X$ be a metrisable partially ordered compact space. By
  Lemma~\ref{d:lem:2}, there is an embedding
  $m \colon X\hookrightarrow [0,1]^\N$ in $\POSCH$. Moreover, with
  \[
    J=\{F\subseteq \N \mid F\text{ is finite}\},
  \]
  $(\pi_F \colon [0,1]^\N\to [0,1]^F)_{F\in J}$ is a limit cone of the
  codirected diagram
  \[
    J^\op \longrightarrow \POSCH
  \]
  sending $F$ to $[0,1]^F$ and $G\supseteq F$ to the canonical projection
  $\pi \colon [0,1]^G\to [0,1]^F$. For every $F\in J$, we consider the
  (Epi,Regular mono)-factorisation
  \[
    X\xrightarrow{\;p_F\;}X_F\xrightarrow{\;m_F\;}[0,1]^F
  \]
  of $\pi_F\cdot m \colon X\to [0,1]^F$. Then, using again Bourbaki's criterion
  (see Theorem~\ref{thm:codirected-limits-COMPHAUS}),
  \[
    (p_F \colon X\to A_F)_{F\in J}
  \]
  is a limit cone of the codirect diagram
  \[
    J^\op \longrightarrow \POSCH
  \]
  sending $F$ to $X_F$ and $G\supseteq F$ to the diagonal of the
  factorisation. By Corollary~\ref{d:cor:1}, each $X_F$ is
  $\aleph_1$-copresentable, hence also $X$ is $\aleph_1$-copresentable since $X$
  is a countable limit of $\aleph_1$-copresentable objects.
\end{proof}

\section*{Acknowledgements}

This work is financed by the ERDF -- European Regional Development Fund through
the Operational Programme for Competitiveness and Internationalisation --
COMPETE 2020 Programme and by National Funds through the Portuguese funding
agency, FCT -- Funda\c{c}\~{a}o para a Ci\^{e}ncia e a Tecnologia within project
POCI-01-0145-FEDER-016692. We also gratefully acknowledge partial financial
assistance by Portuguese funds through CIDMA (Center for Research and
Development in Mathematics and Applications), and the Portuguese Foundation for
Science and Technology (``FCT -- Funda\c{c}\~ao para a Ci\^encia e a
Tecnologia''), within the project UID/MAT/04106/2013. Finally, Renato Neves and
Pedro Nora are also supported by FCT grants SFRH/BD/52234/2013 and
SFRH/BD/95757/2013, respectively.

\newcommand{\etalchar}[1]{$^{#1}$}



\begin{thebibliography}{GHK{\etalchar{+}}80}

\bibitem[Ad{\'a}05]{Ada05} Ji{\v{r}}{\'{\i}} Ad{\'a}mek.  \newblock Introduction
  to coalgebra.  \newblock {\em Theory and Applications of Categories},
  14(8):157--199, 2005.

\bibitem[AR94]{AR94} Ji{\v{r}}{\'i} Ad{\'a}mek and Ji{\v{r}}{\'i} Rosick{\'y}.
  \newblock {\em Locally presentable and accessible categories}, volume 189 of
  {\em London Mathematical Society Lecture Note Series}.  \newblock Cambridge
  University Press, Cambridge, 1994.

\bibitem[BKR07]{BKR07} Marcello~M. Bonsangue, Alexander Kurz, and
  Ingrid~M. Rewitzky.  \newblock Coalgebraic representations of distributive
  lattices with operators.  \newblock {\em Topology and its Applications},
  154(4):778--791, February 2007.

\bibitem[Bou66]{Bou66} Nicolas Bourbaki.  \newblock {\em General topology, part
    {I}}.  \newblock Hermann, Paris and Addison-Wesley, 1966.  \newblock
  chapters 1--4.

\bibitem[CLP91]{CLP91} Roberto Cignoli, S.~Lafalce, and Alejandro Petrovich.
  \newblock Remarks on {P}riestley duality for distributive lattices.  \newblock
  {\em Order}, 8(3):299--315, 1991.

\bibitem[Dus69]{Dus69} John Duskin.  \newblock Variations on {Beck}'s
  tripleability criterion.  \newblock In {\em Reports of the {Midwest}
    {Category} {Seminar} {III}}, number 106 in Lecture {Notes} in {Mathematics},
  pages 74--129. Springer Berlin Heidelberg, 1969.

\bibitem[Gel41]{Gel41a} Izrail Gelfand.  \newblock {Normierte Ringe}.  \newblock
  {\em Recueil Math{\'{e}}matique. Nouvelle S{\'{e}}rie}, 9(1):3--24, 1941.

\bibitem[GHK{\etalchar{+}}80]{GHK+80} Gerhard Gierz, Karl~Heinrich Hofmann,
  Klaus Keimel, Jimmie~D. Lawson, Michael~W. Mislove, and Dana~S. Scott.
  \newblock {\em A compendium of continuous lattices}.  \newblock
  Springer-Verlag, Berlin, 1980.

\bibitem[GU71]{GU71} Peter Gabriel and Friedrich Ulmer.  \newblock {\em Lokal
    pr{\"a}sentierbare {K}ategorien}.  \newblock Lecture Notes in Mathematics,
  Vol. 221. Springer-Verlag, Berlin, 1971.

\bibitem[HN16]{HN16_tmp} Dirk Hofmann and Pedro Nora.  \newblock Enriched
  {S}tone-type dualities.  \newblock Technical report, April 2016,
  \href{http://arxiv.org/abs/1605.00081}{{\ttfamily arXiv:1605.00081
      [math.CT]}}.

\bibitem[HNN16]{HNN16_tmp} Dirk Hofmann, Renato Neves, and Pedro Nora.
  \newblock Limits in {Categories} of {Vietoris} {Coalgebras}.  \newblock
  Technical report, December 2016,
  \href{http://arxiv.org/abs/1612.03318}{{\ttfamily arXiv:1612.03318 [cs.LO]}}.

\bibitem[Hof02]{Hof02} Dirk Hofmann.  \newblock On a generalization of the
  {S}tone-{W}eierstrass theorem.  \newblock {\em Applied Categorical
    Structures}, 10(6):569--592, 2002, eprint:
  \url{http://www.mat.uc.pt/preprints/ps/p9921.ps}.

\bibitem[HT10]{HT10} Dirk Hofmann and Walter Tholen.  \newblock {L}awvere
  completion and separation via closure.  \newblock {\em Applied Categorical
    Structures}, 18(3):259--287, November 2010,
  \href{http://arxiv.org/abs/0801.0199}{{\ttfamily arXiv:0801.0199 [math.CT]}}.

\bibitem[Isb82]{Isb82} John~R. Isbell.  \newblock Generating the algebraic
  theory of {$C(X)$}.  \newblock {\em Algebra Universalis}, 15(2):153--155,
  December 1982.

\bibitem[Jun04]{Jun04} Achim Jung.  \newblock Stably compact spaces and the
  probabilistic powerspace construction.  \newblock In J.~Desharnais and
  P.~Panangaden, editors, {\em Domain-theoretic Methods in Probabilistic
    Processes}, volume~87 of {\em entcs}, pages 5--20.  Elsevier, November 2004.
  \newblock 15pp.

\bibitem[Kel82]{Kel82} G.~Max Kelly.  \newblock {\em Basic concepts of enriched
    category theory}, volume~64 of {\em London Mathematical Society Lecture Note
    Series}.  \newblock Cambridge University Press, Cambridge, 1982.  \newblock
  {Republished in: Reprints in Theory and Applications of Categories.  No.~10
    (2005), 1--136}.

\bibitem[KKV04]{KKV04} Clemens Kupke, Alexander Kurz, and Yde Venema.  \newblock
  Stone coalgebras.  \newblock {\em Theoretical Computer Science},
  327(1-2):109--134, October 2004.

\bibitem[Law73]{Law73} F.~William Lawvere.  \newblock Metric spaces, generalized
  logic, and closed categories.  \newblock {\em Rendiconti del Seminario
    Matem{\`{a}}tico e Fisico di Milano}, 43(1):135--166, December 1973.
  \newblock {Republished in: Reprints in Theory and Applications of Categories,
    No. 1 (2002), 1--37}.

\bibitem[MR17]{MR17} Vincenzo Marra and Luca Reggio.  \newblock Stone duality
  above dimension zero: {Axiomatising} the algebraic theory of ${C(X)}$.
  \newblock {\em Advances in Mathematics}, 307:253--287, February 2017,
  \href{http://arxiv.org/abs/1508.07750}{{\ttfamily arXiv:1508.07750
      [math.LO]}}.

\bibitem[Nac50]{Nac50} Leopoldo Nachbin.  \newblock {\em Topologia e Ordem}.
  \newblock Univ. of Chicago Press, 1950.

\bibitem[Nac65]{Nac65} Leopoldo Nachbin.  \newblock {\em Topology and order}.
  \newblock Translated from the Portuguese by Lulu Bechtolsheim. Van Nostrand
  Mathematical Studies, No. 4. D. Van Nostrand Co., Inc., Princeton,
  N.J.-Toronto, Ont.-London, 1965.

\bibitem[Neg71]{Neg71} Joan~Wick Negrepontis.  \newblock Duality in analysis
  from the point of view of triples.  \newblock {\em Journal of Algebra},
  19(2):228--253, October 1971.

\bibitem[Pet96]{Pet96} Alejandro Petrovich.  \newblock Distributive lattices
  with an operator.  \newblock {\em Studia Logica}, 56(1-2):205--224, 1996.
  \newblock Special issue on Priestley duality.

\bibitem[PR89]{PR89} Joan~Wick Pelletier and Ji{\v{r}\'i} Rosick{\'{y}}.
  \newblock Generating the equational theory of ${C^*}$-algebras and related
  categories.  \newblock In {\em Categorical topology and its relation to
    analysis, algebra and combinatorics (Prague, 1988)}, pages 163--180. World
  Scientific, 1989.  \newblock Proc. Conf. Categorical Topology, Prague 1988.

\bibitem[PR93]{PR93} Joan~Wick Pelletier and Ji{\v r\'i} Rosick{\'y}.  \newblock
  On the equational theory of {$C^*$}-algebras.  \newblock {\em Algebra
    Universalis}, 30(2):275--284, June 1993.

\bibitem[Sch93]{Sch93} Andrea Schalk.  \newblock {\em Algebras for Generalized
    Power Constructions}.  \newblock PhD thesis, Technische Hochschule
  Darmstadt, 1993.

\bibitem[Sto36]{Sto36} Marshall~Harvey Stone.  \newblock The theory of
  representations for {B}oolean algebras.  \newblock {\em Transactions of the
    American Mathematical Society}, 40(1):37--111, July 1936.

\bibitem[Sto38a]{Sto38a} Marshall~Harvey Stone.  \newblock The representation of
  boolean algebras.  \newblock {\em Bulletin of the American Mathematical
    Society}, 44(12):807--816, December 1938.

\bibitem[Sto38b]{Sto38} Marshall~Harvey Stone.  \newblock Topological
  representations of distributive lattices and {B}rouwerian logics.  \newblock
  {\em \v{C}asopis pro p\v{e}stov\'an\'i matematiky a fysiky}, 67(1):1--25,
  1938, eprint: \url{http://dml.cz/handle/10338.dmlcz/124080}.

\bibitem[Stu14]{Stu14} Isar Stubbe.  \newblock An introduction to
  quantaloid-enriched categories.  \newblock {\em Fuzzy Sets and Systems},
  256:95--116, December 2014.  \newblock Special Issue on Enriched Category
  Theory and Related Topics (Selected papers from the 33\textsuperscript{rd}
  Linz Seminar on Fuzzy Set Theory, 2012).

\bibitem[Tho09]{Tho09} Walter Tholen.  \newblock Ordered topological structures.
  \newblock {\em Topology and its Applications}, 156(12):2148--2157, July 2009.

\end{thebibliography}

\end{document}